\makeindex \setcounter{tocdepth}{2}
\theoremstyle{plain}
\newtheorem{theorem}{Theorem}[section]
\newtheorem{conjecture}[theorem]{Conjecture}
\newtheorem{proposition}[theorem]{Proposition}
\newtheorem{corollary}[theorem]{Corollary}
\newtheorem{lemma}[theorem]{Lemma}
\newtheorem{problem}[theorem]{Problem}
\theoremstyle{definition}
\newtheorem{definition}[theorem]{Definition}
\newtheorem{remark}[theorem]{Remark}
\def\bA{\mathbb{A}}
\def\bF{\mathbb{F}}
\def\bP{\mathbb{P}}
\def\bQ{\mathbb{Q}}
\def\bZ{\mathbb{Z}}
\def\bR{\mathbb{R}}
\def\A{\mathbf{A}}
\def\B{\mathbf{B}}
\def\C{\mathbf{C}}
\def\D{\mathbf{D}}
\def\E{\mathbf{E}}
\def\F{\mathbf{F}}
\def\G{\mathbf{G}}
\def\H{\mathbf{H}}
\def\I{\mathbf{I}}
\def\J{\mathbf{J}}
\def\S{\mathbf{S}}
\def\cA{\mathcal{A}}
\def\cB{\mathcal{B}}
\def\cC{\mathcal{C}}
\def\cD{\mathcal{D}}
\def\cE{\mathcal{E}}
\def\cF{\mathcal{F}}
\def\cP{\mathcal{P}}
\def\cS{\mathcal{S}}
\def\cX{\mathcal{X}}
\def\cV{\mathcal{V}}
\def\cT{\mathcal{T}}
\def\fC{\mathfrak{C}}
\def\Br{\mathrm{Br}}
\begin{document}

\title{Generalized Mordell curves, generalized Fermat curves, and the Hasse principle}

\author{Nguyen Ngoc Dong Quan}

\date{December 12, 2012}

\address{Department of Mathematics \\
         University of British Columbia \\
         Vancouver, British Columbia \\
         V6T 1Z2, Canada}

\email{\href{mailto:dongquan@math.ubc.ca}{\tt dongquan@math.ubc.ca}, \href{mailto:dongquan.nndq@gmail.com}{\tt dongquan.nndq@gmail.com}}

\subjclass[2010]{Primary 14G05, 11G35, 11G30}

\keywords{Azumaya algebras, Brauer groups, Brauer-Manin obstruction, Hasse principle, generalized Mordell curves, generalized Fermat curves}

\maketitle

\tableofcontents

\begin{abstract}

A generalized Mordell curve of degree $n \ge 3$ over $\bQ$ is the smooth projective model of the affine curve of the form $Az^2 = Bx^n + C$, where $A, B, C$ are nonzero integers. A generalized Fermat curve of signature $(n, n, n)$ with $n \ge 3$ over $\bQ$ is the smooth projective curve of the form $Ax^n + By^n + Cz^n = 0$ for some nonzero integers $A, B, C$. In this paper, we show that for each prime $p$ with $p \equiv 1 \pmod{8}$ and $p \equiv 2 \pmod{3}$, there exists a threefold $\cX_p \subseteq \bP^6$ such that certain rational points on $\cX_p$ produce infinite families of non-isomorphic generalized Mordell curves of degree $12n$ and infinite families of generalized Fermat curves of signature $(12n, 12n, 12n)$ for each $n \ge 2$ that are counterexamples to the Hasse principle explained by the Brauer-Manin obstruction. We also show that the set of special rational points on $\cX_p$ producing generalized Mordell curves and generalized Fermat curves that are counterexamples to the Hasse principle is infinite, and can be constructed explicitly.

\end{abstract}

\section{Introduction}
\label{introduction-section}

Faltings' theorem, n\'ee the Mordell conjecture states that a smooth geometrically irreducible curve over a number field has only finitely many rational points. Despite of this celebrated result, the following open problem remains widely open: For a family of smooth geometrically irreducible curves $\cC$ over a number field $\bQ$, determine the sets $\cC(\bQ)$ of all rational points on the curves $\cC$. This problem remains open even in the case where we assume further that the family of curves $\cC$ under consideration only consists of \textit{generalized Mordell curves} and \textit{generalized Fermat curves}.

For a positive integer $n \ge 3$, a \textit{generalized Mordell curve of degree $n$} over $\bQ$ is the smooth projective model of the affine curve defined by
\begin{align*}
Az^2 = Bx^n + C,
\end{align*}
where $A, B, C$ are nonzero integers. Although the defining equations of generalized Mordell curves are simple-looking, the problem of finding all rational points on the family of all generalized Mordell curve of arbitrary degree remains widely open. There are several authors studying the arithmetic of generalized Mordell curves. In 2004, Bennett and Skinner \cite{bennett-skinner} developed techniques for finding all rational points on certain generalized Mordell curves of the form $Az^2 = Bx^n + C$ for certain choices of integers $A, B, C$. Their techniques rely on the theory of elliptic curves, Galois representations, and modular forms, and was previously used by Wiles in his celebrated proof of Fermat's last theorem (see \cite{wiles}). In \cite{ivorra-kraus}, Ivorra and Kraus studied the arithmetic of certain generalized Mordell curves of the shape $Az^2 = Bx^p + C$, where $p$ is an odd prime.

For a positive integer $n \ge 3$, a \textit{generalized Fermat curve of signature $(n, n, n)$ over $\bQ$} is the projective curve defined by
\begin{align*}
Ax^n + By^n + Cz^n = 0,
\end{align*}
where $A, B, C$ are nonzero integers such that $\gcd(A, B, C) = 1$. There are several authors studying the arithmetic of generalized Fermat curves; for example, Selmer \cite{selmer} constructed the generalized Fermat curve of signature $(3, 3, 3)$ defined by
\begin{align*}
3x^3 + 4y^3 + 5z^3 = 0,
\end{align*}
which is the first cubic curve of genus one violating the Hasse principle. In \cite{wiles}, Wiles proved Fermat's last theorem, which says that the only rational points on the Fermat curve $x^n + y^n = z^n$ with $n \ge 3$ are the trivial ones.

In this paper, we are concerned with studying nonexistence of rational points on certain families of generalized Mordell curves of degree $12n$ with $n \ge 1$ and of generalized Fermat curves of signature $(12n, 12n, 12n)$ with $n \ge 1$. The basic technique used in this paper is the Brauer-Manin obstruction. It is also crucial that we need to obtain information about rational points on certain higher dimensional varieties to determine the arithmetic of certain families of curves that we consider throughout this work. To add interest to the arithmetic of these curves, we require that they have points over each local field $\bQ_p$ for each prime $p$ including $p = \infty$, and hence these families of curves are counterexamples to the \textit{Hasse principle}.

Let us digress for a moment to review some basic notions in the Brauer-Manin obstruction. Recall that the \textit{Hasse reciprocity law} (see \cite{skorobogatov}) states that the sequence of abelian groups
\begin{align*}
0 \rightarrow \Br(\bQ) \rightarrow \oplus_p\Br(\bQ_p) \rightarrow \bQ/\bZ \rightarrow 0
\end{align*}
is exact, where for each scheme $\cX$, we denote by $\Br(\cX)$ the Brauer group of $\cX$ and for a commutative ring $A$, define
\begin{align*}
\Br(A) := \Br(\text{Spec}(A)).
\end{align*}
In $1970$, Manin \cite{manin}, based on the Hasse reciprocity law, introduced the notion of the Brauer-Manin obstruction. Roughly speaking, the Brauer-Manin obstruction measures how badly the Hasse principle for varieties fails.

Take a smooth geometrically irreducible curve $\cC$ defined over $\bQ$, and let $\bA_{\bQ}$ be the ring of rational adeles. Let $\cC(\bA_{\bQ})$ denote the set of adelic points on $\cC$. It is well-known that
\begin{align*}
\cC(\bA_{\bQ})= \prod_{p}\cC(\bQ_p).
\end{align*}
Manin \cite{manin} introduced a subset of $\cC(\bA_{\bQ})$, say $\cC(\bA_{\bQ})^{\Br}$, such that
\begin{align*}
\cC(\bQ) \subseteq \cC(\bA_{\bQ})^{\Br} \subseteq \cC(\bA_{\bQ}).
\end{align*}
Here $\cC(\bA_{\bQ})^{\Br}$ is defined to be the right kernel of the \textit{adelic Brauer-Manin pairing} (see \cite{skorobogatov})
\begin{align}
\label{Pairing-the-adelic-Brauer-Manin-pairing}
\cE : \Br(\cC) \times \cC(\bA_{\bQ}) &\longrightarrow \bQ/\bZ \\
                   (\cA, (P_p)_{p})    &\mapsto \sum_{p}\text{inv}_{p}\left(\cA(P_p)\right), \nonumber
\end{align}
where for each prime $p$, $\text{inv}_p : \text{Br}(\bQ_p) \longrightarrow \bQ/\bZ$ is the invariant map of local class field theory. We say that $\cC$ \textit{satisfies the Hasse principle} if the following holds: $\cC(\bQ) \ne \emptyset$ if and only if $\cC(\bA_{\bQ}) \ne \emptyset$. We say that $\cC$ is a \textit{counterexample to the Hasse principle} if $\cC(\bQ) = \emptyset$ but $\cC(\bA_{\bQ}) \ne \emptyset$. Furthermore, we say that $\cC$ is a \textit{counterexample to the Hasse principle explained by the Brauer-Manin obstruction} if $\cC$ is a counterexample to the Hasse principle and satisfies $\cC(\bA_{\bQ})^{\Br} = \emptyset$.

Let $p$ be a prime such that $p \equiv 1 \pmod{8}$. Here, as throughout this paper, we denote by $\cX_p$ the threefold in $\bP^6_{\bQ}$ defined by
\begin{align}
\label{threefold-Xp-equations}
\cX_p :
\begin{cases}
b^2 - c^2 + 2pef &= 0 \\
2ab - 2cd + pf^2 &= 0 \\
a^2 - d^2 + pg^2 & = 0
\end{cases}
.
\end{align}

We now introduce certain rational points on $\cX_p$ which are of great interest throughout this paper.

\begin{definition}
\label{septuple-satisfies-hypothesis-FM-definition}

Let $p$ be a prime such that $p \equiv 1 \pmod{8}$. Let $n$ be a positive integer such that $n \ge 1$. Let $(A, B, C, D, E, F, G) \in \bZ^7$ be a septuple of integers such that at least one of them is non-zero. We say that $(A, B, C, D, E, F, G)$ \textit{satisfies Hypothesis FM with respect to the couple $(p, n)$} if the following are true:
\begin{itemize}

\item [(A1)] the point $\cP := (a : b : c : d : e : f : g) = (A : B : C : D : E : F : G)$ belongs to $\cX_p(\bQ)$.

\item [(A2)] let $l$ be any odd prime such that $\gcd(l, 3) = \gcd(l, p) = 1$ and $l$ divides $E$. Then $p$ is a square in $\bQ_l^{\times}$ or $v_l(E) - v_l(G) < 6n$.

\item [(A3)] $\gcd(A, D, G) = 1$, $E \not\equiv 0 \pmod{p}$ and $G  \not\equiv 0 \pmod{p}$.

\item [(A4)] let $l$ be any odd prime such that $\gcd(l, 3) = \gcd(l, p) = 1$ and
\begin{align*}
\gcd\left(AC - BD, DE - CF, AE - BF\right) \equiv 0 \pmod{l}.
\end{align*}
Then $p$ is a square in $\bQ_l^{\times}$.

\item [(A5)] there exists an integer $H$ such that $G - EH^{6} \equiv 0 \pmod{p}$ and $A + \zeta BH^{4}$ is a quadratic non-residue in $\bF_p^{\times}$ for any cube root of unity $\zeta$ in $\bF_p^{\times}$.

\end{itemize}
Moreover, if $3$ is a quadratic non-residue in $\bF_p^{\times}$, we further assume that the following are true:
\begin{itemize}

\item [(A6)] $v_3(E) - v_3(G) < 6n$

\item [(A7)] $A + B \not\equiv 0 \pmod{3}$ and $G \equiv 0 \pmod{3}$.

\end{itemize}

\end{definition}

\begin{remark}
\label{3-is-not-a-square-in-Fp-remark}

Since $p \equiv 1 \pmod{8}$, it follows from the quadratic reciprocity law that $-3$ is not a square in $\bF_p^{\times}$ if and only if $p$ is not a square in $\bF_3^{\times}$, or equivalently $p \equiv 2 \pmod{3}$. Hence if $p \equiv 2 \pmod{3}$, then the group of all cube roots of unity in $\bF_p^{\times}$ is trivial. Thus $(A5)$ is tantamount to the condition that there exists an integer $H$ such that $G - EH^{6} \equiv 0 \pmod{p}$ and $A + BH^{4}$ is a quadratic non-residue in $\bF_p^{\times}$.

\end{remark}

\begin{remark}
\label{non-zero-of-A-D-E-G-remark}

Note that Hypothesis FM implies that $A, D, E$ and $G$ are nonzero. Indeed, by $(A3)$, we know that $E, G \not\equiv 0 \pmod{p}$, and hence $E$, $G$ are nonzero. Assume that $A = 0$. Then, by $(A1)$ and the third equation of $(\ref{threefold-Xp-equations})$, we see that $D^2 = pG^2$. Hence $p = \left(\tfrac{D}{G}\right)^2$, which is a contradiction since $p$ is a prime. Hence $A \ne 0$. Assume that $D = 0$. Then, by $(A1)$ and the third equation of $(\ref{threefold-Xp-equations})$, we deduce that $A^2 + pG^2 = 0$, which is a contradiction since $A^2 + pG^2 > 0$. Hence $D \ne 0$.

\end{remark}

For a prime $p \equiv 1 \pmod{8}$, a positive integer $n$, and each septuple $(A, B, C, D, E, F, G)$ satisfying Hypothesis FM with respect to $(p, n)$, we will produce an Azumaya algebra $\cA$ on the generalized Mordell curve $\cC$ of the shape $pz^2 = E^2x^{12n} - G^2$. The construction of the Azumaya algebra $\cA$ relies mainly on the equations of the threefold $\cX_p$. Using conditions $(A1)-(A7)$, we will show that $\cA$ satisfies
\begin{align*}
\text{inv}_l(\cA(P_l)) =
\begin{cases}
0  \; &\text{if $l \ne p$,}
\\
1/2 \; &\text{if $l = p$}
\end{cases}
\end{align*}
for any $P_l \in \cC(\bQ_l)$. From these invariants of the Azumaya algebra $\cA$, it follows that $\cC(\bA_{\bQ})^{\Br} = \emptyset$, and thus the generalized Mordell curve $\cC$ of degree $12n$ has no rational points. More precisely, we will prove the following.

\begin{theorem}
\label{Theorem-1-in-introduction-section}
(see Theorem \ref{non-existence-of-rational-points-for-generalized-mordell-curves-using-rational-points-on-threefold-Xp-theorem})

Let $p$ be a prime such that $p \equiv 1 \pmod{8}$, and let $n$ be a positive integer. Let $(A, B, C, D, E, F, G)$ be a septuple of integers satisfying Hypothesis FM with respect to $(p, n)$. Then the generalized Mordell curve $\cC$ defined by
\begin{align*}
\cC : pz^2 = E^2x^{12n} - G^2
\end{align*}
satisfies $\cC(\bA_{\bQ})^{\Br} = \emptyset$.

\end{theorem}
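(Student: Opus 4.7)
The plan is to exhibit a single Azumaya algebra $\cA \in \Br(\cC)$ such that, for every prime $l$ and every $P_l \in \cC(\bQ_l)$, the local invariant satisfies
\begin{align*}
\text{inv}_l(\cA(P_l)) = \begin{cases} 1/2 & \text{if } l = p, \\ 0 & \text{if } l \ne p. \end{cases}
\end{align*}
Granted such a $\cA$, the Hasse reciprocity law gives $\sum_l \text{inv}_l(\cA(P_l)) = 1/2 \ne 0$ for every adelic point on $\cC$, so the pairing $\cE$ in \eqref{Pairing-the-adelic-Brauer-Manin-pairing} is non-zero on $(\cA, (P_l)_l)$, and consequently $\cC(\bA_{\bQ})^{\Br} = \emptyset$.

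For the construction of $\cA$, write the curve equation as $pz^2 = (Ex^{6n} - G)(Ex^{6n} + G)$, which identifies the class of $Ex^{6n} - G$ with the class of $p(Ex^{6n} + G)^{-1}$ in $\bQ(\cC)^{\times}/(\bQ(\cC)^{\times})^2$, where $\bQ(\cC)$ denotes the function field of $\cC$. The naive quaternion symbol $(p, Ex^{6n} - G)$ is ramified at too many places on the smooth projective model to be Azumaya, so I would use the three equations \eqref{threefold-Xp-equations} specialized at the chosen point $(A : B : C : D : E : F : G)$ to rewrite $Ex^{6n} - G$ modulo $\bQ(\cC)^{\times 2}$ as an explicit polynomial $\phi(x)$ whose coefficients are polynomial combinations of $A, B, C, D, E, F, G$. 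The resultant computations producing $\phi$ are exactly what generate the gcd-combinations $AC - BD$, $DE - CF$, $AE - BF$ appearing in (A4). Setting $\cA := (p, \phi(x))$, the hypotheses (A3) and (A4) together with the three $\cX_p$-equations then guarantee that the divisor of $\phi$ on $\cC$ is supported on the vanishing locus of $pz^2$, whose square roots lie in $\bQ(\cC)^{\times}$, so that $\cA$ extends to an Azumaya algebra on all of $\cC$.

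Next, I would compute $\text{inv}_l$ place by place. At primes $l$ of good reduction, Hensel's lemma trivializes the quaternion. At $l = 2$ and $l = \infty$ the congruence $p \equiv 1 \pmod{8}$ makes $p$ a square in $\bQ_2^{\times}$ and positive in $\bR$, so the quaternion splits. If $p \equiv 2 \pmod{3}$, then (A6), (A7) and Remark \ref{3-is-not-a-square-in-Fp-remark} dispose of $l = 3$. At an odd prime $l \ne p, 3$ dividing $E$, hypothesis (A2) gives a dichotomy: either $p \in (\bQ_l^{\times})^2$ and the symbol is trivial, or $v_l(E) - v_l(G) < 6n$ constrains $v_l(x_l)$ at any $P_l = (x_l, z_l) \in \cC(\bQ_l)$ tightly enough to force $\phi(x_l)$ to have even valuation with unit part in the trivial square class. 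Hypothesis (A4) handles the primes $l \mid \gcd(AC - BD, DE - CF, AE - BF)$ in the same way. The decisive computation is at $l = p$: any $P_p = (x_p, z_p) \in \cC(\bQ_p)$ forces $Ex_p^{6n} \equiv \pm G \pmod{p}$, so (A5) provides a cube root of unity $\zeta \in \bF_p^{\times}$ for which $\phi(x_p)$ reduces modulo $p$ to $A + \zeta B H^4$ times a square, which is a quadratic non-residue, whence $\text{inv}_p(\cA(P_p)) = 1/2$.

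The main obstacle I expect is the construction of $\phi$ and the verification that $\cA = (p, \phi(x))$ is genuinely Azumaya on the smooth projective model $\cC$. The three equations of $\cX_p$ must be combined in a precise way so that every prime at which $\phi$ naively contributes ramification is absorbed either into the divisor of $pz^2$ (where it is killed by squares in $\bQ(\cC)^{\times}$) or into the exceptional set controlled by (A3) and (A4). The remaining case analysis of local invariants is comparatively standard: each case other than $l = p$ reduces, via Hensel's lemma and the standard properties of quaternion symbols, to showing that $p$ is locally a square or that $\phi(x_l)$ has trivial square class in $\bQ_l^{\times}$.
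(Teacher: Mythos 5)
Your overall strategy (construct $\cA \in \Br(\cC)$ with $\mathrm{inv}_p = 1/2$ and $\mathrm{inv}_l = 0$ for $l \ne p$, then conclude from Hasse reciprocity) is the right one, but your choice of Azumaya algebra is wrong in a way that would make the proof fail.

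You propose $\cA = (p, \phi(x))$ for a polynomial $\phi$ in $x$ alone lying in the same square class as $E x^{6n} - G$ in $\bQ(\cC)^{\times}/\bQ(\cC)^{\times 2}$. Two problems. First, the three $\cX_p$ equations do not simplify $E x^{6n} - G$: in the paper they are used to factor $(A + Bx^{4n})^2 - p^2 z^2 = (Cx^{4n} + D)^2 - p x^{4n}(Ex^{4n} + F)^2$ as a norm from $\bQ(\sqrt{p})$, which is a statement about $R_1 R_2 := (A + Bx^{4n} + pz)(A + Bx^{4n} - pz)$, not about $E x^{6n} - G$. The gcd combinations $AC - BD$, $DE - CF$, $AE - BF$ in $(A4)$ arise later, in the local computation at odd primes $l$ where both $R_1, R_2 \equiv 0 \pmod l$, not in any construction of a representative $\phi$. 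Second, and fatally, the class $(p, Ex^{6n} - G)$ has the wrong local invariant at $p$: any $P_p = (x_p, z_p) \in \cC(\bQ_p)$ has $x_p \in \bZ_p^{\times}$ and $E x_p^{6n} \equiv \pm G \pmod p$, and a short computation (using $p z_p^2 = (Ex_p^{6n}-G)(Ex_p^{6n}+G)$, $(p,p)_p = 1$, and $p \equiv 1 \pmod 8$) shows that $(p, E x_p^{6n} - G)_p = \left(\dfrac{G}{p}\right)$. But Hypothesis FM does not force $G$ to be a quadratic non-residue mod $p$: $(A5)$ gives $G \equiv E H^6 \pmod p$, so $\left(\tfrac{G}{p}\right) = \left(\tfrac{E}{p}\right)$, and $(A3)$ controls only that $E \not\equiv 0 \pmod p$. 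So your class can well have invariant $0$ at $p$, and the argument collapses.

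The paper's algebra is $\cA = (p, A + Bx^{4n} + pz)$, which depends on $z$ and is not in the square class of any rational function of $x$ alone. The norm identity furnished by the $\cX_p$ equations shows $\cA = \cB := (p, A + Bx^{4n} - pz)$ and $\cA = \cE := (p, (A + Bx^{4n} + pz)/x^{6n})$, and the three representatives $R_1, R_2, R_3$ have complementary regular loci covering $\cC$ (here $(A5)$ and Remark \ref{non-zero-of-A-D-E-G-remark} rule out common zeros and control behaviour at infinity), which is how $\cA$ extends to $\Br(\cC)$. At $p$, the invariant computation reduces $A + Bx_p^{4n} + pz_p \pmod p$ to $A + \zeta B H^4$ for a cube root of unity $\zeta$, and $(A5)$ is precisely the hypothesis that this is a quadratic non-residue. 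Your sketch of the $l = p$ case actually gestures at exactly this quantity $A + \zeta B H^4$, but that reduction belongs to the paper's $R_1$ (which contains $pz$ and $B x^{4n}$), not to any $\phi(x)$ equivalent to $Ex^{6n} - G$ modulo squares: passing to the square class of $Ex^{6n}-G$ discards exactly the information that $(A5)$ is designed to exploit.
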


The basic technique that we exploit in the proof of Theorem \ref{Theorem-1-in-introduction-section} is based upon the Brauer-Manin obstruction, and thus our method of studying the arithmetic of generalized Mordell curves is completely different from that appeared in \cite{bennett-skinner} and \cite{ivorra-kraus} in which the authors mainly used the modularity method. Moreover, in order to use the Brauer-Manin obstruction to show that certain generalized Mordell curves possess no rational points, it is crucial that we need to obtain information about the set of rational points on $\cX_p$ satisfying Hypothesis FM. The existence of rational points on the threefolds $\cX_p$ satisfying Hypothesis FM that proves nonexistence of rational points on certain generalized Mordell curves suggests that there exist certain higher dimensional varieties which play an important role in studying the arithmetic of certain curves. In this work, we introduce threefolds $\cX_p$ for each prime $p \equiv 1 \pmod{8}$, and show that the existence of a subset of $\cX_p(\bQ)$ consisting of rational points on $\cX_p$ that satisfy Hypothesis FM is equivalent to the existence of certain families of generalized Mordell curves and of generalized Fermat curves having no rational points. Thus the arithmetic of threefolds $\cX_p$ determine the arithmetic of a special class of curves including certain generalized Mordell curves and certain generalized Fermat curves.

Let $p$ be a prime such that $p \equiv 1 \pmod{8}$ and $p \equiv 2 \pmod{3}$, and let $n$ be a positive integer such that $n \ge 2$. In Section \ref{a-subset-of-the-set-of-rational-points-on-X-p-section}, we will describe an infinite subset of $\cX_p(\bQ)$ that consists of rational points on $\cX_p$ satisfying Hypothesis FM with respect to $(p, n)$. This subset is parameterized by three parameters $\alpha, \beta, \kappa$. For some choice of $\kappa$, and by Theorem \ref{Theorem-1-in-introduction-section}, we prove that there exist infinitely many non-isomorphic generalized Mordell curves of the shape $pz^2 = 3^6\kappa^6x^{12n} - 1$ that are counterexamples to the Hasse principle explained by the Brauer-Manin obstruction. This result is the combination of Theorem \ref{the-first-infinite-family-of-generalized-Mordell-curves-violates-the-Hasse-principle-theorem} and Corollary \ref{infinitude-of-non-isomorphic-generalized-Mordell-curves-violating-the-Hasse-principle-corollary}.

For any positive integer $m$ such that $2m < n$, we will show that there are infinitely many integers $\kappa$ such that the generalized Mordell curve $\cD_{(p, m)}^{(\kappa)}$ has at least two rational points in its affine locus whereas the generalized Mordell curve $\cD_{(p, n)}^{(\kappa)}$ is a counterexample to the Hasse principle explained by the Brauer-Manin obstruction, where $\cD_{(p, m)}^{(\kappa)}$ and $\cD_{(p, m)}^{(\kappa)}$ are defined by
\begin{align*}
\cD_{(p, m)}^{(\kappa)} : pz^2 = 3^6\kappa^6x^{12m} - 1
\end{align*}
and
\begin{align*}
\cD_{(p, n)}^{(\kappa)} : pz^2 = 3^6\kappa^6x^{12n} - 1,
\end{align*}
respectively. This is Lemma \ref{the-existence-of-the-set-C-n-m-such-that-kappa-in-C-n-m-generates-the-sequence-satisfying-the-DCC-lemma} in Section \ref{the-descending-chain-condition-on-sequences-of-curves-section}. This result also shows that the degree $12n$ of the generalized Mordell curves in Theorem \ref{Theorem-1-in-introduction-section} is \textit{optimal} in the sense that one can not replace $n$ by any positive divisor $s$ of $n$ with $s \ne n$. This will be explained in more detail in Remark \ref{Remark-the-degree-12n-is-optimal-for-generalized-Mordell-curves-D}.

In Section \ref{the-descending-chain-condition-on-sequences-of-curves-section}, motivated by Lemma \ref{the-existence-of-the-set-C-n-m-such-that-kappa-in-C-n-m-generates-the-sequence-satisfying-the-DCC-lemma}, we introduce a notion of the \textit{descending chain condition} (DCC) on sequences of curves, which is an analogue of the notion of the descending chain condition on partially ordered sets. Let $(\cX_n, \psi_n)_{n \ge 1}$ be a sequence of curves defined over a global field $k$, where for each $n \ge 1$, $\cX_n$ is a curve of genus $g_n$ defined over $k$, and for each $n \ge 1$, $\psi_n : \cX_{n + 1} \rightarrow \cX_n$ is a $k$-morphism of curves. The sequence $(\cX_n, \psi_n)_{n \ge 1}$ is said to satisfy the DCC of length $h$ if $\cX_n(k) \ne \emptyset$ for each $1 \le n \le h - 1$, and $\cX_h$ is a counterexample to the Hasse principle explained by the Brauer-Manin obstruction. To rule out certain trivial cases, we assume that $g_n > g_m$ for any positive integers $n > m$. It is not difficult to see that there are infinitely many sequences of curves that do not satisfy the DCC of any length. However, constructing sequences of curves satisfying the DCC of arbitrary length seems a nontrivial problem. In Section \ref{the-descending-chain-condition-on-sequences-of-curves-section}, we will show that there exist infinitely many sequences of generalized Mordell curves that satisfy the DCC of arbitrary length, and hence these sequences provide a nontrivial class of sequences of curves satisfying the DCC. This will be proved in Corollary \ref{the-existence-of-infinitude-of-sequences-of-curves-satisfying-DCC-corollary}.

Sections \ref{certain-generalized-Fermat-curves-violate-the-Hasse-principle-section} and \ref{infinitude-of-the-quadruples-p-n-kappa-chi-section} are motivated by the following problem, which was first studied by Halberstadt and Kraus \cite{halberstadt-kraus}.

\begin{problem}
\label{Problem-Halberstadt-Kraus}
(see \cite[Probl\`eme 3 and Probl\`eme 4]{halberstadt-kraus})

For each prime $p \ge 3$, does there exist an explicit generalized Fermat curve $\cC$ of signature $(p, p, p)$ defined over $\bQ$ of the shape $Ax^p + By^p + Cz^p = 0$ that is a counterexample to the Hasse principle?

\end{problem}

As was mentioned before, Selmer \cite{selmer} constructed the generalized Mordell curve defined by $3x^3 + 4y^3 + 5z^3 = 0$ that violates the Hasse principle, and hence Problem \ref{Problem-Halberstadt-Kraus} holds for $p = 3$. For each prime $p$ with $5 \le p \le 100$, Halberstadt and Kraus \cite{halberstadt-kraus} constructed an explicit generalized Fermat curve of signature $(p, p, p)$ that violates the Hasse principle. Hence Problem \ref{Problem-Halberstadt-Kraus} holds for each prime $p$ between $5$ and $100$. Furthermore, assuming the abc conjecture, Halberstadt and Kraus \cite{halberstadt-kraus} proved that Problem \ref{Problem-Halberstadt-Kraus} holds for every prime $p \ge 5$.

In Problem \ref{Problem-Halberstadt-Kraus}, the reason why we only consider certain generalized Fermat curves of signature $(p, p, p)$ with $p$ prime is that we want to rule out certain trivial cases, and that we do not want $p$ to be replaced by any positive divisor $s$ of $p$ with $s \ne p$. Motivated by this observation, we generalize Problem \ref{Problem-Halberstadt-Kraus}, and study the following problem.

\begin{problem}
\label{Problem-DongQuan}

For each positive integer $n \ge 3$, does there exist an explicit generalized Fermat curve $\cC_n$ of signature $(n, n, n)$ defined over $\bQ$ of the shape $Ax^n + By^n + Cz^n = 0$ with $\gcd(A, B, C) = 1$ that is a counterexample to the Hasse principle explained by the Brauer-Manin obstruction such that for any positive divisor $s$ of $n$ with $s \ne n$, the generalized Fermat curve $\cC_s$ of signature $(s, s, s)$ defined by $Ax^s + By^s + Cz^s = 0$ has at least one rational point?

\end{problem}

Upon assuming that $n$ is a prime, we see that Problem \ref{Problem-Halberstadt-Kraus} is a special case of Problem \ref{Problem-DongQuan}. In Sections \ref{certain-generalized-Fermat-curves-violate-the-Hasse-principle-section} and \ref{infinitude-of-the-quadruples-p-n-kappa-chi-section}, we answer Problem \ref{Problem-DongQuan} in the affirmative for any positive integer $n \equiv 0 \pmod{12}$ with $n \ge 24$. In fact, we prove that for each positive integer $n \equiv 0 \pmod{12}$ with $n \ne 12$, there are infinitely many generalized Fermat curves $\cC_n$ of signature $(n, n, n)$ satisfying Problem \ref{Problem-DongQuan}. This result is the combination of Theorem \ref{the-first-family-of-certain-generalized-Fermat-curves-violating-the-Hasse-principle-theorem} and Lemma \ref{infinitude-of-the-quadruples-p-n-kappa-chi-lemma}. As a consequence, in Section \ref{the-DCC-on-sequences-of-generalized-Fermat-curves-section}, we show that there exist infinitely many sequences of generalized Fermat curves that satisfy the DCC of any length.

In the last section, we describe another subset of $\cX_p(\bQ)$, and prove that upon assuming Schinzel's Hypothesis H (which will be reviewed in the same section), there should exist another infinite subset of $\cX_p(\bQ)$ consisting of rational points on $\cX_p$ that satisfy Hypothesis FM with respect to $(p, n)$, where $p$ is a prime such that $p \equiv 1 \pmod{8}$ and $p \equiv 2 \pmod{3}$ and $n$ is a positive integer. Using the same arguments as in Sections \ref{certain-generalized-Mordell-curves-violate-the-Hasse-principle-section} and \ref{certain-generalized-Fermat-curves-violate-the-Hasse-principle-section}, there should exist certain families of generalized Mordell curves and of generalized Fermat curves that are counterexamples to the Hasse principle explained by the Brauer-Manin obstruction. Since we will construct infinitely many generalized Mordell curves and infinitely many Fermat curves violating the Hasse principle in Sections \ref{certain-generalized-Mordell-curves-violate-the-Hasse-principle-section} and \ref{certain-generalized-Fermat-curves-violate-the-Hasse-principle-section}, we will restrict ourselves to only describing another infinite subset of $\cX_p(\bQ)$ in the last section that should produce infinitely many septuples satisfying Hypothesis FM, but not proceeding to construct certain generalized Mordell curves and certain generalized Fermat curves violating the Hasse principle that arise from this subset.

\section{Non-existence of rational points on certain generalized Mordell curves}
\label{nonexistence-of-rational-points-on-certain-generalized-mordell-curves-section}

In this section, we describe a relationship between rational points on $\cX_p$ that satisfy Hypothesis FM and nonexistence of rational points on certain generalized Mordell curves. More precisely, we show that for each prime $p \equiv 1 \pmod{8}$ and a positive integer $n$, the existence of a septuple $(A, B, C, D, E, F, G)$ satisfying Hypothesis FM with respect to the couple $(p, n)$ implies nonexistence of rational points on the generalized Mordell curve of the shape $pz^2 = E^2x^{12n} - G^2$. We begin by proving the main lemma in this section.

\begin{lemma}
\label{azumaya-algebra-A-for-generalized-mordell-curve-C-lemma}

Let $p$ be a prime such that $p \equiv 1 \pmod{8}$, and let $n$ be a positive integer. Assume that $(A, B, C, D, E, F, G) \in \bZ^7$ is a septuple of integers satisfying Hypothesis FM with respect to the couple $(p, n)$. Let $\cC$ be the generalized Mordell curve defined by
\begin{equation}
\label{generalized-mordell-curve-C-equation}
\cC : pz^2 = E^2x^{12n} - G^2.
\end{equation}
Let $\bQ(\cC)$ be the function field of $\cC$, and let $\cA$ be the class of the quaternion algebra $\left(p, A + Bx^{4n} + pz\right)$ in $\Br(\bQ(\cC))$. Then $\cA$ is an Azumaya algebra of $\cC$. Furthermore, $\cB := \left(p, A + Bx^{4n} - pz\right)$ and $\cE := \left(p, \dfrac{A + Bx^{4n} + pz}{x^{6n}}\right)$ represent the same class as $\cA$ in $\Br(\bQ(\cC))$.

\end{lemma}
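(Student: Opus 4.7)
The plan is to first establish the two stated equalities $\cA = \cB = \cE$ in $\Br(\bQ(\cC))$ via direct quaternion-algebra manipulations together with the three relations defining $\cX_p$, and then verify the Azumaya property by computing tame residues at every closed point of the smooth projective curve $\cC$, switching between the three representatives whenever convenient.

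For $\cA = \cE$ I would simply observe that $(p, x^{6n}) = (p, (x^{3n})^2) = 0$ in $\Br(\bQ(\cC))$, since $(x^{3n})^2$ is trivially a norm from $\bQ(\cC)(\sqrt{p})$; hence $\cA - \cE = (p, x^{6n}) = 0$. For $\cA = \cB$, I would combine the three equations $A^2 + pG^2 = D^2$, $2AB = 2CD - pF^2$, $B^2 = C^2 - 2pEF$ coming from $(A1)$ and $(\ref{threefold-Xp-equations})$ with $p^2z^2 = p(E^2x^{12n} - G^2)$ from the curve equation to obtain the identity
\begin{align*}
(A + Bx^{4n} + pz)(A + Bx^{4n} - pz) = (D + Cx^{4n})^2 - p\,x^{4n}(F + Ex^{4n})^2.
\end{align*}
The right-hand side equals the norm from $\bQ(\cC)(\sqrt{p})$ of $(D + Cx^{4n}) - \sqrt{p}\,x^{2n}(F + Ex^{4n})$, so the class $(p, \cdot)$ of the product vanishes; hence $\cA + \cB = 0$ in $\Br(\bQ(\cC))$, and the $2$-torsion of $\cB$ yields $\cA = \cB$.

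For the Azumaya property I would use that $v_P(p) = 0$ at every geometric valuation of $\cC$, so the tame residue of $\cA$ at a closed point $P$ reduces to $\overline{p}^{\,v_P(A + Bx^{4n} + pz)}$ in $\kappa(P)^{\times}/(\kappa(P)^{\times})^2$; this vanishes whenever the valuation is even or $p$ is a square in $\kappa(P)$. At any $P$ where $A + Bx^{4n} + pz$ is a unit the residue is trivial. At the closed points above $\infty \in \bP^1_x$, the weighted-projective closure of $\cC$ gives $z/x^{6n} \to \pm E/\sqrt{p}$, so $\sqrt{p} \in \kappa(P)$ and the residue vanishes. At a closed point $P$ where $A + Bx^{4n} + pz$ vanishes while $x^{2n}(F + Ex^{4n})$ is a unit at $P$, the displayed identity and the curve equation give $(D + Cx^{4n})^2 = p\,x^{4n}(F + Ex^{4n})^2$ in $\kappa(P)$, exhibiting a square root of $p$ directly.

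The principal obstacle will be the exceptional subcases where this square-root expression degenerates: $x_0 = 0$ at $P$, or the simultaneous vanishing $D + Cx_0^{4n} = F + Ex_0^{4n} = 0$ at $P$. The case $x_0 = 0$ would force $A + pz_0 = 0$ and hence $A^2 = -pG^2$, which is impossible by Remark \ref{non-zero-of-A-D-E-G-remark}, so no zero of $\cA$ lies above $x = 0$. For the simultaneous-vanishing case I would switch to the representative $\cB$ via $\cA = \cB$: at such a $P$ the element $A + Bx^{4n} - pz$ takes the value $-2pz(P)$, so whenever $z(P) \ne 0$ this representative is a unit at $P$, and the residue computed via $\cB$ vanishes. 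The residual sub-sub-case $z(P) = 0$ forces the simultaneous relations $AC = BD$, $DE = CF$, $AE = BF$, and $E^2A^3 + G^2B^3 = 0$ on the septuple, which I would rule out using Hypothesis FM — particularly $(A3)$ and $(A4)$, which together constrain the septuple arithmetically enough to exclude it. This case-swapping between $\cA$, $\cB$, and $\cE$ is precisely why the three representatives are recorded in the statement.
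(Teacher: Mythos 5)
Your proof is correct, and the equalities $\cA = \cB = \cE$ are established exactly as the paper does (the norm identity plus $2$-torsion). Computing tame residues at closed points is purity-equivalent to the paper's Zariski cover $\cC = U_1 \cup U_2 \cup U_3$, so that choice is cosmetic. The genuine difference is the treatment of an affine $P$ at which $R_1 = A + Bx^{4n} + pz$ vanishes. The paper proves the stronger structural fact that $R_1$ and $R_2$ have \emph{no} common affine zero, and it does so using $(A5)$: from $A + Bx_0^{4n} = 0$ and $E^2x_0^{12n} = G^2$ one obtains $(-A)^3 = B^3G^2/E^2$, hence $A + \zeta BH^4 \equiv 0 \pmod{p}$ for some cube root of unity $\zeta$, contradicting the non-residuality clause of $(A5)$. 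You never invoke $(A5)$: you allow common zeros, observe that when $x^{2n}(F + Ex^{4n})$ is a unit the norm identity exhibits $\sqrt{p} \in \kappa(P)$ and kills the residue, and rule out the residual degenerate configuration $A + Bx_0^{4n} = D + Cx_0^{4n} = F + Ex_0^{4n} = z_0 = 0$ by deriving the \emph{integer} identities $AC = BD$, $DE = CF$, $AE = BF$ (the Remark's non-vanishings of $A, D, E$ supply $B, C, E \neq 0$), so that the gcd in $(A4)$ would be zero and $(A4)$ would force $p$ to be a square in $\bQ_l^{\times}$ for every odd prime $l$ coprime to $3p$, which is absurd. Both routes are sound under Hypothesis FM. The paper's proof gives the sharper fact that $R_1, R_2$ are coprime on the affine part; yours trades that for a longer case-split and a dependence on $(A4)$ in the Azumaya step, whereas the paper reserves $(A4)$ exclusively for the local-invariant computation in Theorem \ref{non-existence-of-rational-points-for-generalized-mordell-curves-using-rational-points-on-threefold-Xp-theorem} and leaves $(A5)$ to do double duty.
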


\begin{proof}

We will prove that there is a Zariski open covering $(U_i)_i$ of $\cC$ such that $\cA$ extends to an element of $\text{Br}(U_i)$ for each $i$.

By $(A1)$, we see that $(\ref{generalized-mordell-curve-C-equation})$ can be written in the form
\begin{align}
\label{factorization-of-equation-of-generalized-mordell-curve-C}
(A + Bx^{4n} + pz)(A + Bx^{4n} - pz) &= (Cx^{4n} + D)^2 - px^{4n}(Ex^{4n} + F)^2 \\
                                     &= \text{Norm}_{\bQ(\sqrt{p})/\bQ}\left((Cx^{4n} + D) - \sqrt{p}x^{2n}(Ex^{4n} + F) \right).   \nonumber
\end{align}
It follows from the identity above that $\cA + \cB = 0$. Furthermore, we see that $\cA - \cE = (p, x^{6n}) = 0$. Since $\cA, \cB$ and $\cE$ belong to the $2$-torsion part of $\Br(\bQ(\cC))$, we deduce that $\cA = \cB = \cE$.

Let $U_1$ be the \textit{largest} open subvariety of $\cC$ in which the rational function $R_1 := A + Bx^{4n} + pz$ has neither a zero nor pole. Let $U_2$ be the \textit{largest} open subvariety of $\cC$ in which $R_2 := A + Bx^{4n} - pz$ has neither a zero nor pole. Since $\cA = \cB$, $\cA$ is an Azumaya algebra on $U_1$ and also on $U_2$. We prove that in the affine part of $\cC$, the locus where both of $R_1$ and $R_2$ have a zero is empty. Assume the contrary, and let $(X, Z)$ be a common zero of $R_1$ and $R_2$. We see that
\begin{align*}
A + BX^{4n} = R_1 + R_2 = 0
\end{align*}
and
\begin{align*}
Z = \dfrac{R_1 - R_2}{2p} = 0.
\end{align*}
This implies that $B \ne 0$; otherwise, we deduce that $A = 0$, which is a contradiction to Remark \ref{non-zero-of-A-D-E-G-remark}. Hence $B \ne 0$, and thus it follows that $X^{4n} = -\tfrac{A}{B}$. By $(\ref{generalized-mordell-curve-C-equation})$, we deduce that
\begin{align*}
X^{12n} = \dfrac{G^2}{E^2} = \left(-\dfrac{A}{B}\right)^3.
\end{align*}
Hence
\begin{align}
\label{relation-among-A-B-E-G-in-azumaya-algebra-equation}
(-A)^3 = \dfrac{B^3G^2}{E^2}.
\end{align}
Let $H$ be an integer satisfying $(A5)$ in Definition \ref{septuple-satisfies-hypothesis-FM-definition}. Since $E \not\equiv 0 \pmod{p}$, it follows from $(A5)$ and $(\ref{relation-among-A-B-E-G-in-azumaya-algebra-equation})$ that
\begin{align*}
(-A)^3 = \dfrac{B^3G^2}{E^2} \equiv (BH^4)^3 \pmod{p}.
\end{align*}
Hence $-A \equiv \zeta BH^4 \pmod{p}$ for some cube root of unity $\zeta$ in $\bF_p^{\times}$. Thus $A + \zeta BH^4 \equiv 0 \pmod{p}$, which is a contradiction to $(A5)$. Therefore, in the affine part of $\cC$, the locus where both of $R_1$ and $R_2$ have a zero is empty.

Let $R_3 := \dfrac{A + Bx^{4n} + pz}{x^{6n}}$, and let $\infty = (X_{\infty}: Y_{\infty}: Z_{\infty})$ be a point at infinity on $\cC$. We know that $Y_{\infty} = 0$ and
\begin{align*}
\dfrac{Z_{\infty}}{X_{\infty}^{6n}} = \pm \dfrac{E}{\sqrt{p}}.
\end{align*}
It follows that
\begin{align*}
R_3(\infty) = \dfrac{AY_{\infty}^{6n} + BX_{\infty}^{4n}Y_{\infty}^{2n} + pZ_{\infty}}{X_{\infty}^{6n}} = \dfrac{pZ_{\infty}}{X_{\infty}^{6n}} = \pm \sqrt{p}E.
\end{align*}
By Remark \ref{non-zero-of-A-D-E-G-remark}, we know that $E$ is nonzero. Hence $R_3 \ne 0$, and thus $R_3$ is regular and non-vanishing at the points at infinity on $\cC$.

Let $U_3$ be the \textit{largest} open subvariety of $\cC$ in which the rational function $R_3$ has neither a zero nor pole. Then, since $\cA = \cE$, we deduce that $\cA$ is an Azumaya algebra on $U_3$. By what we have shown, it follows that $\cC = U_1 \cup U_2 \cup U_3$. Since $\cA$ is an Azumaya algebra on each $U_i$ for $1 \le i \le 3$, we deduce that $\cA$ belongs to $\Br(\cC)$, proving our contention.

\end{proof}

We now prove the main theorem in this section, which relates rational points on $\cX_p$ satisfying Hypothesis FM to nonexistence of rational points on certain generalized Mordell curves.

\begin{theorem}
\label{non-existence-of-rational-points-for-generalized-mordell-curves-using-rational-points-on-threefold-Xp-theorem}

Let $p$ be a prime such that $p \equiv 1 \pmod{8}$, and let $n$ be a positive integer. Let $(A, B, C, D, E, F, G) \in \bZ^7$ be a septuple of integers satisfying Hypothesis FM with respect to $(p, n)$. Let $\cC$ be the generalized Mordell curve defined by $(\ref{generalized-mordell-curve-C-equation})$ in Lemma \ref{azumaya-algebra-A-for-generalized-mordell-curve-C-lemma}. Then $\cC(\bA_{\bQ})^{\Br} = \emptyset$.

\end{theorem}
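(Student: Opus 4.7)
The plan is to use the Azumaya algebra $\cA$ from Lemma \ref{azumaya-algebra-A-for-generalized-mordell-curve-C-lemma} together with the Brauer--Manin pairing. It suffices to show that for every adelic point $(P_l)_l \in \cC(\bA_{\bQ})$ one has $\sum_l \mathrm{inv}_l(\cA(P_l)) = \tfrac{1}{2}$ in $\bQ/\bZ$, since this is nonzero. Concretely, I would exhibit a single value of $l$ where the local invariant is always $\tfrac{1}{2}$ and show that all other local invariants vanish; the three representatives $\cA, \cB, \cE$ from Lemma \ref{azumaya-algebra-A-for-generalized-mordell-curve-C-lemma} let us choose, at each local point, a representative where the second slot of the quaternion symbol $(p, \cdot)_l$ is both regular and non-vanishing.

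First I would dispose of the trivial places. At $l = \infty$ the prime $p$ is positive, hence a square in $\bR^{\times}$, so $(p, \cdot)_{\infty} = 0$. At $l = 2$ the hypothesis $p \equiv 1 \pmod 8$ forces $p \in (\bQ_2^{\times})^2$, so $(p, \cdot)_2 = 0$. More generally, at any odd prime $l \ne p$ for which $p$ is a square in $\bQ_l^{\times}$, the symbol is again trivial; this absorbs many cases via (A2) (when $l \mid E$) and (A4) (when $l$ divides the gcd of $AC-BD$, $DE-CF$, $AE-BF$). For the remaining odd primes $l \ne p$ at which $p$ is a non-square unit, the extension $\bQ_l(\sqrt{p})/\bQ_l$ is unramified, so $(p, u)_l = 0$ iff $v_l(u)$ is even. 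The identity
\[
R_1 R_2 = (Cx^{4n}+D)^2 - p x^{4n}(Ex^{4n}+F)^2 = N_{\bQ_l(\sqrt{p})/\bQ_l}\bigl((Cx^{4n}+D) - \sqrt{p}\, x^{2n}(Ex^{4n}+F)\bigr)
\]
forces $v_l(R_1 R_2)$ to be even, hence $v_l(R_1)$ and $v_l(R_2)$ have the same parity. I would then show both parities are even: assuming $v_l(R_1(P_l)) > 0$ and $v_l(R_2(P_l)) > 0$ leads, via $R_1 + R_2 = 2(A + B x_0^{4n})$ and $R_1 - R_2 = 2pz_0$ together with the defining equation of $\cC$, to a congruence forcing $l$ to divide the common factor appearing in (A4) or the quantity $E$ (in which case (A2) kicks in), contradicting the assumption that $p$ is a non-square mod $l$. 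The special prime $l = 3$, which is excluded from (A2) and (A4), is handled via (A6) and (A7) exactly when $3$ is a non-square mod $p$; when $3$ is a square mod $p$, quadratic reciprocity (as in Remark \ref{3-is-not-a-square-in-Fp-remark}) ensures $p$ is a square in $\bQ_3^{\times}$ and the prime $3$ is already in the trivial case.

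The decisive place is $l = p$. Since $p \equiv 1 \pmod 8$ implies $\sqrt{p} \notin \bQ_p$, the points at infinity of $\cC$ are not $\bQ_p$-rational, so any $P_p \in \cC(\bQ_p)$ is affine, say $P_p = (x_0, z_0)$. A valuation argument on $pz_0^2 = E^2 x_0^{12n} - G^2$ using $v_p(E) = v_p(G) = 0$ from (A3) forces $v_p(x_0) = 0$ and $v_p(z_0) \ge 0$; consequently $z_0 \in \bZ_p$ and $x_0^{12n} \equiv (G/E)^2 \pmod p$. By (A5) there exists $H$ with $G \equiv E H^6 \pmod p$, so $(x_0^n/H)^{12} \equiv 1 \pmod p$, meaning $x_0^{4n} \equiv \zeta H^4 \pmod p$ for some cube root of unity $\zeta \in \bF_p^{\times}$. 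Therefore
\[
R_1(P_p) = A + B x_0^{4n} + p z_0 \equiv A + \zeta B H^4 \pmod p,
\]
which by (A5) is a nonzero quadratic non-residue modulo $p$. Since $p$ is a uniformizer and the unit $R_1(P_p)$ is a non-square in $\bZ_p^{\times}$, the local symbol $(p, R_1(P_p))_p$ has invariant $\tfrac{1}{2}$.

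Combining these calculations, $\sum_l \mathrm{inv}_l(\cA(P_l)) = \tfrac{1}{2} \ne 0$ for every adelic point, so no adelic point pairs trivially with $\cA$, proving $\cC(\bA_{\bQ})^{\Br} = \emptyset$. The main technical obstacle will be the bookkeeping in the non-square case at odd $l \ne p$: one must verify that every way of having simultaneous divisibility $l \mid R_1(P_p), R_2(P_p)$ can be traced back to a congruence already excluded by (A2), (A4), (A6), or (A7), and that one can always pass to one of the representatives $\cA$, $\cB$, or $\cE$ which is regular and non-vanishing at $P_l$ so that the quaternion symbol is defined.
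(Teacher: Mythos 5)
Your outline tracks the paper's proof at every step: the same place-by-place decomposition, the same use of the representatives $\cA=\cB=\cE$ of the Azumaya algebra, the same invocation of $(A2)$--$(A7)$ at the respective places, and the same decisive computation at $l=p$ producing the invariant $\tfrac{1}{2}$ via $R_1(P_p)\equiv A+\zeta BH^4 \pmod p$. The treatment of $l=\infty,2,3$ and of the trivial odd places where $p$ is a square in $\bQ_l^{\times}$ also matches.

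There is, however, a genuine gap in your handling of the odd primes $l\ne p$, $\gcd(l,3)=1$, at which $p$ is a non-square unit. Your parity argument runs: $v_l(R_1)+v_l(R_2)$ is even because $R_1R_2$ is a norm from the unramified extension $\bQ_l(\sqrt p)/\bQ_l$, so $v_l(R_1)\equiv v_l(R_2)\pmod 2$; then one rules out $v_l(R_1)>0$ and $v_l(R_2)>0$ simultaneously, and concludes both valuations are even. That last deduction is only valid if $v_l(R_1),v_l(R_2)\ge 0$, which requires $v_l(x_0)\ge 0$ and $v_l(z_0)\ge 0$. If $v_l(x_0)<0$ then $R_1=A+Bx_0^{4n}+pz_0$ and $R_2=A+Bx_0^{4n}-pz_0$ generically have negative valuations (the term $Bx_0^{4n}$ dominates), so ``not both strictly positive'' is vacuously true and gives no parity control. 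In the paper this case is handled by a separate argument (Case~2 of the proof) showing $v_l(x_0)<0$ is in fact impossible when $p$ is a non-square in $\bQ_l^{\times}$: after extracting the $l$-power from $x_0$ and $z_0$ (using $(A2)$ to bound $v_l(E)-v_l(G)<6n$, hence to guarantee $l\mid G^2 l^{-2v_l(E)-12n\epsilon}$), one reduces the curve equation modulo $l$ and obtains $p\equiv (E_0x_0^{6n}/z_0)^2\pmod l$, contradicting the non-square assumption. You mention $(A2)$ in passing but fold it into the ``$l\mid \gcd$'' contradiction rather than isolating it into the necessary ``rule out $v_l(x_0)<0$'' step; without that step your parity dichotomy does not close. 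The remainder of your Case~1 argument (simultaneous divisibility forces $l\mid\gcd(AC-BD,DE-CF,AE-BF)$ or $l\mid\gcd(A,D,G)$, contradicting $(A3)$ or $(A4)$) is sound and matches the paper.
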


\begin{proof}

We maintain the notation of Lemma \ref{azumaya-algebra-A-for-generalized-mordell-curve-C-lemma}. We will prove that for any $P_l \in \cC(\bQ_l)$,
\begin{align}
\label{invariant-value-of-A-equations}
 \text{inv}_l(\cA(P_l)) =
\begin{cases}
0  \; &\text{if $l \ne p$,}
\\
1/2 \; &\text{if $l = p$.}
\end{cases}
\end{align}
Since $\cC$ is smooth, we know that $\cC^{\ast}(\bQ_l)$ is $l$-adically dense in $\cC(\bQ_l)$, where $\cC^{\ast}$ is the affine curve given by $pz^2 = E^2x^{12n} - G^2$. Since $\text{inv}_l(\cA(P_l))$ is a continuous function on $\cC(\bQ_l)$ with respect to the $l$-adic topology, it suffices to prove $(\ref{invariant-value-of-A-equations})$ for any $P_l \in \cC^{\ast}(\bQ_l)$.

Suppose that $l = 2, \infty$ or $l$ is an odd prime such that $l \ne p$ and $p$ is a square in $\bQ_l^{\times}$. Then, for any $t \in \bQ_l^{\times}$, the local Hilbert symbol $(p, t)_l$ is $1$. Thus $\text{inv}_l(\cA(P_l))$ is $0$.

Suppose that $l = 3$. If $3$ is a square in $\bF_p^{\times}$, then by the quadratic reciprocity law, we know that $p$ is a square in $\bF_3^{\times}$. Hence, using the same arguments as above, we deduce that $\text{inv}_3(\cA(P_3)) = 0$. If $3$ is a quadratic non-residue in $\bF_p^{\times}$, then we contend that $v_3(x) \ge 0$. Assume the contrary, that is, $v_3(x) = \epsilon < 0$. Since $\epsilon = v_3(x) \in \bZ$, we see that $\epsilon \le -1$. Hence, by $(A6)$, we deduce that
\begin{align*}
v_3(E^2x^{12n}) = 2v_3(E) + 12n\epsilon \le 2v_3(E) - 12n < 2v_3(G) = v_3(G^2).
\end{align*}
It then follows that
\begin{align*}
2v_3(z) = v_3(pz^2) = \min\left(v_3(E^2x^{12n}), v_3(G^2)\right) = v_3(E^2x^{12n}) = 2v_3(E) + 12n\epsilon,
\end{align*}
and hence $v_3(z) = v_3(E) + 6n\epsilon$. Therefore there exist elements $x_0, z_0, E_0 \in \bZ_3^{\times}$ such that
\begin{align*}
x &= 3^{\epsilon}x_0, \\
z & = 3^{v_3(E) + 6n\epsilon}z_0,\\
E &= 3^{v_3(E)}E_0.
\end{align*}
By $(\ref{generalized-mordell-curve-C-equation})$, we deduce that
\begin{align*}
p3^{2v_3(E) + 12n\epsilon}z_0^2 = 3^{2v_3(E) + 12n\epsilon}E_0^2x_0^{12n} - G^2.
\end{align*}
Multiplying both sides of the above equation by $3^{-2v_3(E) - 12n\epsilon}$, we see that
\begin{align}
\label{x0-z0-equation-for-prime-3}
pz_0^2 = E_0^2x_0^{12n} - 3^{-2v_3(E) - 12n\epsilon}G^2.
\end{align}
We see that
\begin{align*}
v_3(3^{-2v_3(E) - 12n\epsilon}G^2) = 2v_3(G) - 2v_3(E) - 12n\epsilon &> -12n - 12n\epsilon \; \; (\text{by (A6)}) \\
                                                                   &= 12n(-\epsilon - 1) \ge 0.
\end{align*}
Hence $v_3(3^{-2v_3(E) - 12n\epsilon}G^2) > 0$. Since $v_3(3^{-2v_3(E) - 12n\epsilon}G^2)$ is an integer, it follows that
\begin{align*}
v_3(3^{-2v_3(E) - 12n\epsilon}G^2) \ge 1,
\end{align*}
and thus $3^{-2v_3(E) - 12n\epsilon}G^2 \in 3\bZ_3$. Reducing equation $(\ref{x0-z0-equation-for-prime-3})$ modulo $3$, we deduce that
\begin{align*}
p \equiv \left(\dfrac{E_0x_0^{6n}}{z_0}\right)^2 \pmod{3},
\end{align*}
which is a contradiction since $p$ is not a square in $\bF_3^{\times}$. This contradiction implies that $v_3(x) \ge 0$. By $(\ref{generalized-mordell-curve-C-equation})$, we see that $v_3(z) \ge 0$.

By $(A7)$ and $(\ref{generalized-mordell-curve-C-equation})$, we deduce that
\begin{equation*}
pz^2 = E^2x^{12n} - G^2 \equiv E^2x^{12n} \pmod{3}.
\end{equation*}
Since $p$ is not a square in $\bF_3^{\times}$, it follows that $z \equiv Ex \equiv 0 \pmod{3}$. Assume that $A + Bx^{4n} + pz \equiv 0 \pmod{3}$. Since $z \equiv 0 \pmod{3}$, it follows that $A + Bx^{4n} \equiv 0 \pmod{3}$. We see from $(\ref{factorization-of-equation-of-generalized-mordell-curve-C})$ that
\begin{equation*}
(Cx^{4n}  + D)^2 - px^{4n}(Ex^{4n} + F)^2 \equiv 0 \pmod{3}.
\end{equation*}
Since $p$ is not a square in $\bF_3^{\times}$, it follows that
\begin{align*}
Cx^{4n}  + D \equiv x(Ex^{4n} + F) \equiv 0 \pmod{3}.
\end{align*}
Thus we deduce that $A + Bx^{4n} \equiv Cx^{4n}  + D \equiv 0 \pmod{3}$. We contend that $x \not\equiv 0 \pmod{3}$; otherwise, $A \equiv D \equiv 0 \pmod{3}$, and hence it follows from $(A7)$ that $3$ divides $\gcd(A, D, G)$, which is a contradiction to $(A3)$. Thus $x \not\equiv 0 \pmod{3}$, and hence we deduce that $x^2 \equiv 1 \pmod{3}$. By $(A7)$, we see that
\begin{align*}
0 \equiv A + Bx^{4n} \equiv A + B \not\equiv 0 \pmod{3},
\end{align*}
which is a contradiction. Hence $A + Bx^{4n} + pz \not\equiv 0 \pmod{3}$, and thus we deduce that the local Hilbert symbol $(p, A + Bx^{4n} + pz)_3$ is $1$. Therefore $\text{inv}_3(\cA(P_3))$ is $0$.

Suppose that $l$ is an odd prime such that $\gcd(l, 3) = \gcd(l, p) = 1$ and $p$ is not a square in $\bQ_l^{\times}$. We consider the following two cases.

$\star$ \textit{Case 1. $v_l(x) \ge 0$.}

Assume that
\begin{align}
\label{x-z-equation-1-for-l-not-square}
\begin{cases}
A + Bx^{4n} + pz &\equiv 0 \pmod{l}, \\
A + Bx^{4n} - pz &\equiv 0 \pmod{l}.
\end{cases}
\end{align}
By $(\ref{factorization-of-equation-of-generalized-mordell-curve-C})$, we deduce that
\begin{equation*}
(Cx^{4n}  + D)^2 - px^{4n}(Ex^{4n} + F)^2 \equiv 0 \pmod{l}.
\end{equation*}
Since $p$ is not a square in $\bQ_l^{\times}$, it follows that
\begin{align}
\label{x-z-equation-2-for-l-not-square}
\begin{cases}
Cx^{4n} + D &\equiv 0 \pmod{l}, \\
x(Ex^{4n} + F) &\equiv 0 \pmod{l}.
\end{cases}
\end{align}
Adding both of the equations of $(\ref{x-z-equation-1-for-l-not-square})$, we deduce that\begin{align}
\label{x-equation-1-for-l-not-square}
A + Bx^{4n} \equiv 0 \pmod{l}.
\end{align}
If $x \equiv 0 \pmod{l}$, then it follows from $(\ref{x-equation-1-for-l-not-square})$ and the first equation of $(\ref{x-z-equation-2-for-l-not-square})$ that $A \equiv D \equiv 0 \pmod{l}$. By $(A1)$, we deduce that $pG^2 = D^2 - A^2 \equiv 0 \pmod{l}$. Since $p \ne l$, we deduce that $l$ divides $G$, and hence $l$ divides $\gcd(A, D, G)$, which is a contradiction to $(A3)$. If $x \not\equiv 0 \pmod{l}$, then it follows from the second equation of $(\ref{x-z-equation-2-for-l-not-square})$ that
\begin{align}
\label{x-equation-2-for-l-not-square}
Ex^{4n} + F \equiv 0 \pmod{l}.
\end{align}
Hence it follows from $(\ref{x-equation-1-for-l-not-square})$, $(\ref{x-equation-2-for-l-not-square})$ and the first equation of $(\ref{x-z-equation-2-for-l-not-square})$ that
\begin{align*}
\begin{cases}
BCx^{4n} &\equiv -AC \equiv -BD \pmod{l}, \\
BEx^{4n} &\equiv -AE \equiv -BF \pmod{l}, \\
CEx^{4n} &\equiv -DE \equiv -CF \pmod{l}.
\end{cases}
\end{align*}
Thus
\begin{align*}
\begin{cases}
AC - BD &\equiv 0 \pmod{l}, \\
AE - BF &\equiv 0 \pmod{l}, \\
DE - CF &\equiv 0 \pmod{l},
\end{cases}
\end{align*}
and hence $l$ divides $\gcd\left(AC - BD, DE - CF, AE - BF\right)$. Thus it follows from $(A4)$ that $p$ is a square in $\bQ_l^{\times}$, which is a contradiction. Therefore at least one of $A + Bx^{4n} + pz$ and $A + Bx^{4n} - pz$ is nonzero modulo $l$, say $U$. Since $\cA$ and $\cB$ represent the same class in $\Br(\bQ(\cC))$, we deduce that the local Hilbert symbol $(p, U)_l$ is $1$. Hence $\text{inv}_l(\cA(P_l))$ is $0$.

$\star$ \textit{Case 2. $v_l(x) = \epsilon < 0$.}

Since $\epsilon = v_l(x) \in \bZ$, we deduce that $\epsilon \le - 1$. If $v_l(E) = 0$, then we see that
\begin{align*}
v_l(E^2x^{12n}) = 12n\epsilon \le -12n < v_l(G^2).
\end{align*}
If $v_l(E) > 0$, then $l$ divides $E$. Since $p$ is not a square in $\bQ_l^{\times}$, it follows from $(A2)$ that
\begin{align*}
v_l(E) - v_l(G) < 6n.
\end{align*}
Hence
\begin{align*}
v_l(E^2x^{12n}) = 2v_l(E) + 12n\epsilon \le 2v_l(E) - 12n < 2v_l(G) = v_l(G^2).
\end{align*}
Thus, in any event, we see that $v_l(E^2x^{6n}) < v_l(G^2)$. Hence it follows from $(\ref{generalized-mordell-curve-C-equation})$ that
\begin{align*}
v_l(z) = \dfrac{v_l(pz^2)}{2} = \dfrac{\min(v_l(E^2x^{12n}), v_l(G^2))}{2} = v_l(E) + 6n\epsilon.
\end{align*}
Hence there are elements $x_0, z_0, E_0 \in \bZ_l^{\times}$ such that
\begin{align*}
x &= l^{\epsilon}x_0, \\
z &= l^{v_l(E) + 6n\epsilon}z_0, \\
E &= l^{v_l(E)}E_0.
\end{align*}
Hence it follows from $(\ref{generalized-mordell-curve-C-equation})$ that
\begin{align*}
pl^{2v_l(E) + 12n\epsilon}z_0^2 = l^{2v_l(E) + 12n\epsilon}E_0^2x_0^{12n} - G^2.
\end{align*}
Multiplying both sides of the above equation by $l^{-2v_l(E) - 12n\epsilon}$, we deduce that
\begin{align}
\label{x0-z0-equation-for-l-not-square-and-v-l(x)-less-than-0}
pz_0^2 = E_0^2x_0^{12n} - G^2l^{-2v_l(E) - 12n\epsilon}.
\end{align}

If $v_l(E) = 0$, then we see that
\begin{align*}
v_l\left(G^2l^{-2v_l(E) - 12n\epsilon}\right) = 2v_l(G) - 2v_l(E) - 12n\epsilon = 2v_l(G) - 12n\epsilon \ge 2v_l(G) + 12n \ge 12n \ge 12.
\end{align*}
If $v_l(E) > 0$, then we know that $l$ divides $E$. Since $p$ is not a square in $\bQ_l^{\times}$, we deduce from $(A2)$ that
\begin{align*}
v_l(E) - v_l(G) < 6n,
\end{align*}
and hence
\begin{align*}
v_l\left(G^2l^{-2v_l(E) - 12n\epsilon}\right) &= 2v_l(G) - 2v_l(E) - 12n\epsilon \\
&> -12n - 12n\epsilon = 12n(-\epsilon - 1) \ge 0.
\end{align*}
Thus, in any event, we see that
\begin{align*}
v_l\left(G^2l^{-2v_l(E) - 12n\epsilon}\right) > 0.
\end{align*}
Since $v_l(G^2l^{-2v_l(E) - 12n\epsilon})$ is an integer, it follows that $v_l(G^2l^{-2v_l(E) - 12n\epsilon}) \ge 1$, and hence
\begin{align*}
G^2l^{-2v_l(E) - 12n\epsilon} \in l\bZ_l.
\end{align*}
Reducing equation $(\ref{x0-z0-equation-for-l-not-square-and-v-l(x)-less-than-0})$ modulo $l$, one obtains that
\begin{align*}
p \equiv \left(\dfrac{E_0x_0^{6n}}{z_0}\right)^2 \pmod{l},
\end{align*}
which is a contradiction since $p$ is not a square in $\bQ_l^{\times}$.

Thus, in any event, if $l$ is an odd prime such that $\gcd(l, 3) = \gcd(l, p) = 1$ and $p$ is not a square in $\bQ_l^{\times}$, then $\text{inv}_l(\cA(P_l))$ is $0$.

Suppose that $l = p$. If $v_p(x) = \epsilon < 0$, then we know from $(A3)$ that $E \not\equiv 0 \pmod{p}$. Hence we see that
\begin{align*}
v_p\left(E^2x^{12n}\right) = 12n\epsilon < 0 \le v_p(G^2).
\end{align*}
It then follows from the above inequality and $(\ref{generalized-mordell-curve-C-equation})$ that
\begin{align*}
1 + 2v_p(z) = v_p(pz^2) = v_p(E^2x^{12n}) = 12n\epsilon,
\end{align*}
which is a contradiction since the left-hand side is an odd integer whereas the right-hand side is an even integer.

If $v_p(x) \ge 0$, then it follows from $(\ref{generalized-mordell-curve-C-equation})$ that
\begin{align*}
1 + 2v_p(z) = v_p(pz^2) = v_p\left(E^2x^{12n} - G^2\right) \ge 0.
\end{align*}
Hence it follows that $v_p(z) \ge -\dfrac{1}{2}$. Since $v_p(z) \in \bZ$, we deduce that $v_p(z) \ge 0$, and hence $z \in \bZ_p$. We contend that $x \in \bZ_p^{\times}$. Assume the contrary, that is, $x \equiv 0 \pmod{p}$. By $(\ref{generalized-mordell-curve-C-equation})$, we deduce that
\begin{align*}
G^2 = E^2x^{12n} - pz^2 \equiv 0 \pmod{p},
\end{align*}
which is a contradiction to $(A3)$. Hence we deduce that $x \in \bZ_p^{\times}$. Reducing equation $(\ref{generalized-mordell-curve-C-equation})$ modulo $p$, we deduce that
\begin{align}
\label{equation-of-x-in-terms-of-E-and-G-in-the-case-of-l-equal-p-equation}
E^2x^{12n} - G^2 \equiv 0 \pmod{p}.
\end{align}
Let $H$ be an integer satisfying $(A5)$ in Definition \ref{septuple-satisfies-hypothesis-FM-definition}. By $(\ref{equation-of-x-in-terms-of-E-and-G-in-the-case-of-l-equal-p-equation})$, $(A3)$ and $(A5)$, we see that
\begin{align*}
x^{12n} &\equiv \left(\dfrac{G}{E}\right)^2 \equiv H^{12} \pmod{p},
\end{align*}
and hence $x^{4n} \equiv \zeta H^4 \pmod{p}$ for some cube root of unity $\zeta$ in $\bF_p^{\times}$. Thus it follows from $(A5)$ that
\begin{align*}
A + Bx^{4n} + pz \equiv A + \zeta BH^4 \not\equiv 0 \pmod{p}.
\end{align*}
Using Theorem $5.2.7$ in \cite{cohen}, we deduce from $(A5)$ that the local Hilbert symbol $(p, A + Bx^{4n} + pz)_p$ satisfies
\begin{align*}
(p, A + Bx^{4n} + pz)_p = \left(\dfrac{A + \zeta BH^4}{p} \right) = -1,
\end{align*}
which proves that $\text{inv}_p(\cA(P_p)) = 1/2$.

Therefore, in any event, we have
\begin{align*}
\sum_{l}\text{inv}_l\cA(P_l) = 1/2
\end{align*}
for any $(P_l)_{l} \in \cC(\bA_{\bQ})$, and hence $\cC(\bA_{\bQ})^{\text{Br}} = \emptyset$. Hence our contention follows.

\end{proof}

\section{A subset of the set of all rational points on $\cX_p$}
\label{a-subset-of-the-set-of-rational-points-on-X-p-section}

Let $p$ be a prime such that $p \equiv 1 \pmod{8}$ and $p \equiv 2 \pmod{3}$. Let $n$ be an integer such that $n \ge 2$. In this section, we will construct a subset of the set of all rational points on $\cX_p$ that produces infinitely many septuples $(A, B, C, D, E, F, G) \in \bZ^7$ satisfying Hypothesis FM with respect to $(p, n)$ in the sense of Definition \ref{septuple-satisfies-hypothesis-FM-definition}. This subset of $\cX_p(\bQ)$ plays a key role in constructing certain families of generalized Mordell curves and certain families of generalized Fermat curves that are counterexamples to the Hasse principle explained by the Brauer-Manin obstruction.

Let $(\alpha, \beta, \kappa) \in \bZ^3$ be a triple of nonzero integers satisfying the following conditions:
\begin{itemize}

\item [(B1)] $\alpha$ and $\beta$ are odd and $\gcd(\alpha, 3) = \gcd(\alpha, p) = \gcd(\alpha, \beta) = \gcd(\beta, p) = 1$.

\item [(B2)] let $l$ be any odd prime such that $\gcd(l, 3) = 1$ and $l$ divides $\alpha\beta$. Then $p$ is a square in $\bQ_l^{\times}$.

\item [(B3)] $v_3(\kappa) < 2n - 1$ and $v_3(\kappa)$ is an odd positive integer.

\item [(B4)] $\kappa \not\equiv 0 \pmod{p}$.

\item [(B5)] let $l$ be any odd prime such that $\gcd(l, 3) = 1$ and $l$ divides $\kappa$. Then $p$ is a square in $\bQ_l^{\times}$.

\end{itemize}
Define
\begin{align}
\label{the-first-parameterization-of-septuples-(A-B-C-D-E-F-G)-satisfying-Hypothesis-FM-equations}
\begin{cases}
A &:= \dfrac{p\alpha^2 - 9\beta^2}{2}  \\
B &:= 9(p\alpha^2 - 9\beta^2)\kappa^2    \\
C &:= 9(p\alpha^2 + 9\beta^2)\kappa^2   \\
D &:= \dfrac{p\alpha^2 + 9\beta^2}{2}  \\
E &:= 81\alpha\beta\kappa^3 \\
F &:= 18\alpha\beta\kappa \\
G &:= 3\alpha\beta.
\end{cases}
\end{align}
By $(\ref{the-first-parameterization-of-septuples-(A-B-C-D-E-F-G)-satisfying-Hypothesis-FM-equations})$, we deduce that
\begin{align}
\label{the-expressions-of-B-C-E-F-in-terms-of-A-D-G-equations}
\begin{cases}
B &=  18\kappa^2A \\
C &=  18\kappa^2D \\
E &= 27\kappa^3G \\
F &= 6\kappa G.
\end{cases}
\end{align}
We now prove that $(A, B, C, D, E, F, G)$ satisfies Hypothesis FM with respect to $(p, n)$.

\begin{lemma}
\label{the-first-parameterization-of-septuples-(A-B-C-D-E-F-G)-satisfying-Hypothesis-FM-lemma}

Let $p$ be a prime such that $p \equiv 1 \pmod{8}$ and $p \equiv 2 \pmod{3}$. Let $n$ be an integer such that $n \ge 2$. Let $(\alpha, \beta, \kappa) \in \bZ^3$ be a triple of nonzero integers satisfying $(B1)-(B5)$. Let $\cT := (A, B, C, D, E, F, G) \in \bZ^7$ be a septuple of integers defined by $(\ref{the-first-parameterization-of-septuples-(A-B-C-D-E-F-G)-satisfying-Hypothesis-FM-equations})$. Then $\cT$ satisfies Hypothesis FM with respect to $(p, n)$.

\end{lemma}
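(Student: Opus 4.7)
The plan is to verify each of (A1)--(A7) in turn, exploiting the fact that the parameterization (\ref{the-first-parameterization-of-septuples-(A-B-C-D-E-F-G)-satisfying-Hypothesis-FM-equations}) collapses to the succinct relations $B = 18\kappa^2 A$, $C = 18\kappa^2 D$, $E = 27\kappa^3 G$, and $F = 6\kappa G$, which simplify almost every subsequent computation. Throughout I will also use that $3$ is a quadratic non-residue modulo $p$ by Remark \ref{3-is-not-a-square-in-Fp-remark}, together with $(-1/p) = (2/p) = 1$ coming from $p \equiv 1 \pmod{8}$.

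Condition (A1) reduces, after substitution, to three cancellations each of the form $c \cdot p\alpha^2\beta^2\kappa^j + (-c) \cdot p\alpha^2\beta^2\kappa^j = 0$, so I will just write them out. For (A3), $E = 81\alpha\beta\kappa^3$ and $G = 3\alpha\beta$ are manifestly coprime to $p$ by (B1) and (B4); and $\gcd(A,D) = 1$ (hence $\gcd(A,D,G) = 1$) follows from $A + D = p\alpha^2$ and $D - A = 9\beta^2$ together with the coprimality assumptions in (B1) (using $p \ne 2, 3$). Condition (A2) is then immediate: any odd prime $l \ne 3, p$ dividing $E$ must divide $\alpha\beta\kappa$, so (B2) or (B5) supplies the required square condition on $p$. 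For (A4), the identities above yield $AC - BD = 0$, $AE - BF = -81\kappa^3 AG$, and $DE - CF = -81\kappa^3 DG$; since $\gcd(A,D) = 1$, the gcd in (A4) equals $243|\kappa|^3|\alpha\beta|$, and the same argument as for (A2) applies.

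The main subtlety is (A5). Since $p \equiv 2 \pmod{3}$, the only cube root of unity in $\bF_p^\times$ is $\zeta = 1$, so I need a single integer $H$ with $G \equiv EH^6 \pmod{p}$ and $A + BH^4$ a quadratic non-residue modulo $p$. Using $E = 27\kappa^3 G$, the first requirement becomes $(3\kappa H^2)^3 \equiv 1 \pmod{p}$, and since cubing is a bijection on $\bF_p^\times$ when $p \equiv 2 \pmod{3}$, this collapses to $H^2 \equiv (3\kappa)^{-1} \pmod{p}$. To produce such an $H$, I must show that $3\kappa$ is a square modulo $p$. Writing $\kappa = 3^{v_3(\kappa)}\kappa'$ with $\gcd(\kappa', 3) = 1$, (B3) makes $v_3(3\kappa) = v_3(\kappa) + 1$ even, so the $3$-adic part of $3\kappa$ is automatically a square; for the coprime-to-$3$ part I apply quadratic reciprocity in the form $(l/p) = (p/l)$ (valid for odd primes $l \ne p$ since $p \equiv 1 \pmod{4}$), together with (B5), to conclude $(l/p) = 1$ for every odd prime divisor $l$ of $\kappa'$, while $(2/p) = 1$ and $(-1/p) = 1$ handle the prime $2$ and the sign. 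Multiplying, $\kappa'$, and hence $3\kappa$, is a QR mod $p$. With $H$ so chosen, $B = 18\kappa^2 A$ and $H^4 \equiv (3\kappa)^{-2} \pmod{p}$ give $BH^4 \equiv 2A \pmod{p}$, whence $A + BH^4 \equiv 3A \equiv -27\beta^2/2 \pmod{p}$; the Legendre symbol of $-27/2$ mod $p$ equals $(-1/p)(3/p)^3(2/p)^{-1} = -1$, so $A + BH^4$ is a QNR.

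The remaining conditions (A6) and (A7) are routine. For (A6), a direct valuation computation gives $v_3(E) - v_3(G) = 3 + 3v_3(\kappa) < 6n$ by (B3). For (A7), $G = 3\alpha\beta$ is divisible by $3$, and $3 \mid B = 18\kappa^2 A$ gives $A + B \equiv A \pmod{3}$, while $2A \equiv p\alpha^2 \equiv 2 \pmod{3}$ (using $p \equiv 2 \pmod{3}$ and $\gcd(\alpha, 3) = 1$) forces $A \equiv 1 \not\equiv 0 \pmod{3}$. The most delicate step will be the quadratic-reciprocity bookkeeping in (A5); once $3\kappa$ is shown to be a QR mod $p$, everything else is direct verification.
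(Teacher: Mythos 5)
Your proof is correct and follows essentially the same route as the paper's: reduce the parameterization to the relations $B = 18\kappa^2A$, $C = 18\kappa^2D$, $E = 27\kappa^3G$, $F = 6\kappa G$; verify (A1), (A3), (A6), (A7) by direct substitution; for (A4) compute $AC - BD = 0$, $AE - BF = -81\kappa^3AG$, $DE - CF = -81\kappa^3DG$ with $\gcd(A,D)=1$; and for (A5) show $3\kappa$ is a square mod $p$ via (B3), (B5) and quadratic reciprocity, choose $H$ with $3\kappa H^2 \equiv 1$, and reduce $A + BH^4 \equiv 3A$ to a Legendre symbol computation using $p \equiv 1 \pmod 8$ and $(3/p) = -1$. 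The only small deviation is in (A2): you dispatch every prime divisor of $E$ directly through (B2) or (B5), whereas the paper splits on whether $l \mid \kappa$ and, in the coprime case, observes instead that $v_l(E) - v_l(G) = 0 < 6n$; both are valid, and your version is marginally shorter since it avoids the valuation computation.
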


\begin{proof}

By $(B1)$, $(B4)$ and $(\ref{the-first-parameterization-of-septuples-(A-B-C-D-E-F-G)-satisfying-Hypothesis-FM-equations})$, we see that $\cT$ satisfies $(A3)$ and $(A7)$. Furthermore, it is not difficult to verify that the point $\cP := (a : b : c : d : e : f : g) = (A : B : C : D : E : F : G)$ belongs to $\cX_p(\bQ)$, and hence $\cT$ satisfies $(A1)$.

We now prove that $\cT$ satisfies $(A2)$. Indeed, let $l$ be an odd prime such that $\gcd(l, 3) = \gcd(l, p) = 1$ and $l$ divides $E$. We see that either $l$ divides $\kappa$ or $\gcd(l, \kappa) = 1$. If $l$ divides $\kappa$, then it follows from $(B5)$ that $p$ is a square in $\bQ_l^{\times}$. Assume that $\gcd(l, \kappa) = 1$. By $(\ref{the-expressions-of-B-C-E-F-in-terms-of-A-D-G-equations})$, we see that
\begin{align*}
v_l(E) - v_l(G) = v_l(27\kappa^3G) - v_l(G) = v_l(27\kappa^3) = 0 < 6n,
\end{align*}
and hence $\cT$ satisfies $(A2)$.

We now prove that $\cT$ satisfies $(A4)$. Suppose that $l$ is an odd prime such that $\gcd(l, 3) = \gcd(l, p) = 1$ and
\begin{align*}
\gcd(AC - BD, DE - CF, AE - BF) \equiv 0 \pmod{l}.
\end{align*}
By $(B1)$ and $(\ref{the-first-parameterization-of-septuples-(A-B-C-D-E-F-G)-satisfying-Hypothesis-FM-equations})$, one can show that $\gcd(A, D) = 1$. By $(\ref{the-expressions-of-B-C-E-F-in-terms-of-A-D-G-equations})$, we see that
\begin{align*}
AC - BD &= 0, \\
DE - CF &= -3^4\kappa^3DG,  \\
AE - BF &= -3^4\kappa^3AG.
\end{align*}
Hence it follows that
\begin{align*}
\gcd(AC - BD, DE - CF, AE - BF) = \pm 3^4\kappa^3G,
\end{align*}
and hence we deduce that $l$ divides $3^4\kappa^3G$. Since $\gcd(l, 3) = 1$, it follows from $(\ref{the-first-parameterization-of-septuples-(A-B-C-D-E-F-G)-satisfying-Hypothesis-FM-equations})$ that $l$ divides $\kappa$ or $\alpha\beta$. By $(B2)$ and $(B5)$, we deduce that $p$ is a square in $\bQ_l^{\times}$, which proves that $(A4)$ is true.

We now prove that $(A5)$ holds. Let $\kappa^{\ast}$ be the integer such that $\kappa = 3^{v_3(\kappa)}\kappa^{\ast}$ and $\gcd(\kappa^{\ast}, 3) = 1$, where $v_3$ denotes the $3$-adic valuation. We contend that $3\kappa$ is a square in $\bF_p^{\times}$. Indeed, we see that $3\kappa = 3^{v_3(\kappa) + 1}\kappa^{\ast}$. By $(B3)$, we deduce that $v_3(\kappa) + 1$ is an even positive integer, and hence it follows that $3^{v_3(\kappa) + 1}$ is a square in $\bF_p^{\times}$. Thus it suffices to show that $\kappa^{\ast}$ is a square in $\bF_p^{\times}$. Write $\kappa^{\ast} = \pm 2^{v_2(\kappa^{\ast})}\prod_{l \mid \kappa^{\ast}}l^{v_l(\kappa^{\ast})}$, where $l$ ranges over the set of all odd primes dividing $\kappa^{\ast}$. By $(B5)$ and the quadratic reciprocity law, we know that $l$ is a square in $\bF_p^{\times}$ for any odd prime $l$ dividing $\kappa^{\ast}$. Since $p \equiv 1 \pmod{8}$, we deduce that the Jacobi symbol $\left(\dfrac{\kappa^{\ast}}{p}\right)$ satisfies
\begin{align*}
\left(\dfrac{\kappa^{\ast}}{p}\right) = \left(\dfrac{\pm 1}{p}\right)\left(\dfrac{2^{v_2(\kappa^{\ast})}}{p}\right)\left(\prod_{l \mid \kappa^{\ast}}\left(\dfrac{l}{p}\right)\right) = 1.
\end{align*}
Thus $\kappa^{\ast}$ is a square in $\bF_p^{\times}$, and hence $3\kappa$ is a square in $\bF_p^{\times}$. Thus there exists an integer $H$ such that
\begin{align}
\label{the-first-paramterization-of-H-satisfies-Hypothesis-FM-equation}
3\kappa H^2 \equiv 1 \pmod{p}.
\end{align}
We contend that $H$ satisfies $(A5)$. Indeed, by $(\ref{the-expressions-of-B-C-E-F-in-terms-of-A-D-G-equations})$ and $(\ref{the-first-paramterization-of-H-satisfies-Hypothesis-FM-equation})$, we see that
\begin{align*}
G - EH^6 = G - 27\kappa^3GH^6 = G\left(1 - 27\kappa^3H^6\right) = G\left(1 - (3\kappa H^2)^3\right) \equiv 0 \pmod{p}.
\end{align*}

By Remark \ref{3-is-not-a-square-in-Fp-remark} and since $p \equiv 2 \pmod{3}$, it remains to show that $A + BH^4$ is a quadratic non-residue in $\bF_p^{\times}$. By $(\ref{the-expressions-of-B-C-E-F-in-terms-of-A-D-G-equations})$ and $(\ref{the-first-paramterization-of-H-satisfies-Hypothesis-FM-equation})$, we know that
\begin{align*}
A + BH^4 = A + 18\kappa^2AH^4 = A\left(1 + 18\kappa^2H^4\right) = A\left(1 + 2(3\kappa H^2)^2\right) \equiv 3A \pmod{p}.
\end{align*}
By $(\ref{the-first-parameterization-of-septuples-(A-B-C-D-E-F-G)-satisfying-Hypothesis-FM-equations})$, we see that
\begin{align*}
3A = \dfrac{3(p\alpha^2 - 9\beta^2)}{2} \equiv -\dfrac{3(3\beta)^2}{2} \pmod{p}.
\end{align*}
Since $p \equiv 1 \pmod{8}$, we know that $-1$ and $1/2$ are squares in $\bF_p^{\times}$. Since $p \equiv 2 \pmod{3}$, we see that $p$ is a quadratic non-residue modulo $3$, and it follows from the quadratic reciprocity law that $3$ is not a square in $\bF_p^{\times}$. Thus we deduce that the Jacobi symbol $\left(\dfrac{A + BH^4}{p}\right)$ satisfies
\begin{align*}
\left(\dfrac{A + BH^4}{p}\right) = \left(\dfrac{3A}{p}\right) = \left(\dfrac{-1}{p}\right)\left(\dfrac{1/2}{p}\right)\left(\dfrac{3}{p}\right)\left(\dfrac{(3\beta)^2}{p}\right) = -1,
\end{align*}
and thus $A + BH^4$ is a quadratic non-residue in $\bF_p^{\times}$. Therefore $\cT$ satisfies $(A5)$.

By $(B3)$ and $(\ref{the-expressions-of-B-C-E-F-in-terms-of-A-D-G-equations})$, we know that
\begin{align*}
v_3(E) - v_3(G) = v_3\left(27\kappa^3G\right) - v_3(G) = v_3(27) + v_3(\kappa^3) = 3 + 3v_3(\kappa) < 6n,
\end{align*}
and hence $\cT$ satisfies $(A6)$. Thus our contention follows.

\end{proof}

\begin{corollary}
\label{the-generalized-Mordell-curve-D-have-no-rational-points-corollary}

Let $p$ be a prime such that $p \equiv 1 \pmod{8}$ and $p \equiv 2 \pmod{3}$. Let $n$ be an integer such that $n \ge 2$. Let $\kappa$ be a nonzero integer satisfying $(B3)$, $(B4)$ and $(B5)$. Let $\cD$ be the smooth projective model of the affine curve defined by
\begin{align}
\label{the-generalized-Mordell-curve-D-equation}
\cD : pz^2 = 3^6\kappa^6x^{12n} - 1.
\end{align}
Then $\cD(\bA_{\bQ})^{\Br} = \emptyset$.

\end{corollary}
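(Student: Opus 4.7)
The plan is to reduce Corollary 3.2 directly to Theorem 2.2 by choosing a specific septuple arising from the parameterization $(\ref{the-first-parameterization-of-septuples-(A-B-C-D-E-F-G)-satisfying-Hypothesis-FM-equations})$ and then observing that the associated generalized Mordell curve $pz^2 = E^2x^{12n}-G^2$ is isomorphic over $\bQ$ to $\cD$.

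First I would set $\alpha = \beta = 1$. With this choice, condition $(B1)$ is trivial (both are odd, and all relevant gcds equal $1$) and condition $(B2)$ is vacuous, since there is no odd prime $l$ with $\gcd(l,3)=1$ dividing $\alpha\beta = 1$. The remaining conditions $(B3)$, $(B4)$, $(B5)$ hold on $\kappa$ by the hypotheses of the corollary. Thus the triple $(1, 1, \kappa)$ satisfies $(B1)$--$(B5)$, and Lemma \ref{the-first-parameterization-of-septuples-(A-B-C-D-E-F-G)-satisfying-Hypothesis-FM-lemma} applies to produce a septuple $\cT := (A, B, C, D, E, F, G)$ satisfying Hypothesis FM with respect to $(p, n)$, where in particular
\begin{align*}
E = 81\kappa^3, \qquad G = 3.
\end{align*}

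Next I would apply Theorem \ref{non-existence-of-rational-points-for-generalized-mordell-curves-using-rational-points-on-threefold-Xp-theorem} to this septuple. The conclusion is that the generalized Mordell curve
\begin{align*}
\cC : pz^2 = E^2 x^{12n} - G^2 = 3^8\kappa^6 x^{12n} - 9
\end{align*}
satisfies $\cC(\bA_{\bQ})^{\Br} = \emptyset$.

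Finally, the substitution $z \mapsto 3z$ yields a $\bQ$-isomorphism of affine curves between $\cC$ and $\cD : pz^2 = 3^6\kappa^6 x^{12n} - 1$ (simply divide the defining equation of $\cC$ by $G^2 = 9$). This isomorphism extends to the smooth projective models, so $\cD(\bA_{\bQ})^{\Br} = \cC(\bA_{\bQ})^{\Br} = \emptyset$, as desired. No step here is a genuine obstacle once $\alpha = \beta = 1$ is recognized as admissible; the main conceptual content of the corollary is already carried by Lemma \ref{the-first-parameterization-of-septuples-(A-B-C-D-E-F-G)-satisfying-Hypothesis-FM-lemma} and Theorem \ref{non-existence-of-rational-points-for-generalized-mordell-curves-using-rational-points-on-threefold-Xp-theorem}, and the corollary amounts to observing that the parameterization specializes to precisely the curve $\cD$ up to a harmless rescaling of $z$.
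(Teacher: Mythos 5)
Your proof is correct and takes essentially the same route as the paper: the paper's own argument also produces a septuple from the parameterization of Section 3 satisfying Hypothesis FM (using an arbitrary admissible pair $(\alpha,\beta)$ rather than the explicit choice $\alpha=\beta=1$), applies Theorem 2.2 to get $\cC(\bA_{\bQ})^{\Br}=\emptyset$ for $pz^2 = E^2x^{12n} - G^2$, and then uses the isomorphism $(x,z)\mapsto(x,Gz)$ to transfer the conclusion to $\cD$. Fixing $\alpha=\beta=1$ is a clean simplification that loses nothing, since the resulting curve $\cD$ is independent of $(\alpha,\beta)$.
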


\begin{proof}

It is easy to show that there are infinitely many couples $(\alpha, \beta) \in \bZ^2$ of nonzero integers that satisfy $(B1)$ and $(B2)$. Take such a couple $(\alpha, \beta) \in \bZ^2$, and let $\cT := (A, B, C, D, E, F, G) \in \bZ^7$ be the septuple of integers defined by $(\ref{the-first-parameterization-of-septuples-(A-B-C-D-E-F-G)-satisfying-Hypothesis-FM-equations})$. It follows from Lemma \ref{the-first-parameterization-of-septuples-(A-B-C-D-E-F-G)-satisfying-Hypothesis-FM-lemma} that $\cT$ satisfies Hypothesis FM with respect to $(p, n)$. Let $\cC$ be the smooth projective model of the affine curve defined by $(\ref{generalized-mordell-curve-C-equation})$ in Theorem \ref{non-existence-of-rational-points-for-generalized-mordell-curves-using-rational-points-on-threefold-Xp-theorem}, that is, $\cC$ is of the form
\begin{align*}
\cC : pz^2 = E^2x^{12n} - G^2.
\end{align*}
By Theorem \ref{non-existence-of-rational-points-for-generalized-mordell-curves-using-rational-points-on-threefold-Xp-theorem}, we know that $\cC(\bA_{\bQ})^{\mathrm{Br}} = \emptyset$. By $(\ref{the-expressions-of-B-C-E-F-in-terms-of-A-D-G-equations})$, we know that $E = 3^3\kappa^3G$. Hence, under the morphism
\begin{align*}
(x, z) &\mapsto (x, Gz),
\end{align*}
we see that the curve $\cC$ is isomorphic to $\cD$ over $\bQ$. Thus we deduce that $\cD(\bA_{\bQ})^{\mathrm{Br}} = \emptyset$.

\end{proof}

\section{Certain generalized Mordell curves violating the Hasse principle}
\label{certain-generalized-Mordell-curves-violate-the-Hasse-principle-section}

Let $p$ be an odd prime such that $p \equiv 1 \pmod{8}$ and $p \equiv 2 \pmod{3}$. Let $n$ be an integer such that $n \ge 2$. Let $\kappa$ be a nonzero integer satisfying $(B3)$, $(B4)$ and $(B5)$. In this section, we will prove a sufficient condition depending on certain congruences of $\kappa$ modulo finitely many primes under which the curve $\cD$ defined by $(\ref{the-generalized-Mordell-curve-D-equation})$ is a counterexample to the Hasse principle explained by the Brauer-Manin obstruction. This result will be proved in Subsection \ref{a-sufficient-condition-for-which-certain-generalized-Mordell-curves-violate-the-Hasse-principle-subsection}. As a consequence, in Subsection \ref{infinitude-of-the-triples-p-n-kappa-subsection}, we will show that there are infinitely many integers $\kappa$ satisfying the sufficient condition, and hence it follows that there are infinitely many non-isomorphic generalized Mordell curves violating the Hasse principle explained by the Brauer-Manin obstruction.

\subsection{A sufficient condition}
\label{a-sufficient-condition-for-which-certain-generalized-Mordell-curves-violate-the-Hasse-principle-subsection}

We recall the celebrated \textit{Hasse-Weil bound}.

\begin{lemma}
\label{Hasse-Weil-bound-lemma}
$(\text{see \cite[Corollary 7.2.1, p.130]{poonen3}})$

Let $\cX$ be a smooth geometrically irreducible projective curve of genus $n$ over the finite field $\bF_q$. Then
\begin{align*}
\left|\#\cX(\bF_q) - (q + 1)\right| \le 2n\sqrt{q}.
\end{align*}

\end{lemma}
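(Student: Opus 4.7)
The plan is to deduce the bound from Weil's theorem on the zeta function of a smooth projective curve over a finite field (the Riemann hypothesis for curves). First, I would introduce the zeta function
$$Z(\cX, T) := \exp\left(\sum_{r \ge 1} \#\cX(\bF_{q^r}) \frac{T^r}{r}\right),$$
and invoke Weil's theorem, which asserts that $Z(\cX, T)$ is a rational function of the form
$$Z(\cX, T) = \frac{P(T)}{(1 - T)(1 - qT)},$$
where $P(T) = \prod_{i = 1}^{2n}(1 - \alpha_i T) \in \bZ[T]$ is a polynomial of degree $2n$ whose reciprocal roots $\alpha_i \in \bC$ all satisfy $|\alpha_i| = \sqrt{q}$.

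Next, I would take the logarithmic derivative of both sides of the identity above and compare the coefficients of $T^{r - 1}$ to extract the closed-form point count
$$\#\cX(\bF_{q^r}) = q^r + 1 - \sum_{i = 1}^{2n}\alpha_i^r,$$
which is valid for every $r \ge 1$. Specializing to $r = 1$ and applying the triangle inequality then gives
$$\left|\#\cX(\bF_q) - (q + 1)\right| = \left|\sum_{i = 1}^{2n}\alpha_i\right| \le \sum_{i = 1}^{2n}|\alpha_i| = 2n\sqrt{q},$$
which is exactly the stated inequality.

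The main obstacle is of course Weil's theorem itself, and in particular the modulus estimate $|\alpha_i| = \sqrt{q}$, which is the deep content of the Riemann hypothesis for curves over finite fields. Since the paper invokes this lemma as a standard tool and cites Poonen's book directly, in practice I would not reprove Weil's theorem here; I would merely quote it, either in its original incarnation via the positivity of the Rosati involution on $\mathrm{End}(\mathrm{Jac}(\cX)) \otimes \bR$, or via the elementary Stepanov--Bombieri argument that avoids the Jacobian altogether, and then derive the stated corollary as above.
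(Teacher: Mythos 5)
The paper does not prove this lemma at all: it states the Hasse--Weil bound and cites \cite[Corollary~7.2.1]{poonen3} as an external reference, treating it as a black box. So there is no "paper's own proof" to compare against. Your derivation is the standard one and it is correct: the rationality of the zeta function $Z(\cX,T) = P(T)/\bigl((1-T)(1-qT)\bigr)$ with $\deg P = 2n$, the logarithmic-derivative trick to extract $\#\cX(\bF_{q^r}) = q^r + 1 - \sum_i \alpha_i^r$, and the triangle inequality together with $|\alpha_i| = \sqrt{q}$ give exactly $\bigl|\#\cX(\bF_q) - (q+1)\bigr| \le 2n\sqrt{q}$. You are also right that the modulus estimate $|\alpha_i| = \sqrt{q}$ is the genuinely deep ingredient (the Riemann hypothesis for function fields), and that in this context quoting it — whether via Weil's positivity argument on the Jacobian or via the Stepanov--Bombieri elementary method — is the appropriate choice rather than reproving it. In short, your argument fills in a proof that the paper deliberately omits, and it does so along the canonical route.
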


We now prove the main theorem in this section.

\begin{theorem}
\label{the-first-infinite-family-of-generalized-Mordell-curves-violates-the-Hasse-principle-theorem}

Let $p$ be a prime such that $p \equiv 1 \pmod{8}$ and $p \equiv 2 \pmod{3}$. Let $n$ be an integer such that $n \ge 2$. Let $\kappa$ be a nonzero integer satisfying $(B3)$, $(B4)$ and $(B5)$. Assume further that the following are true:
\begin{itemize}

\item [(C1)] $\kappa \equiv \dfrac{1}{3} \pmod{p^{2v_p(n) + 1}}$.

\item [(C2)] let $\A$ be the set of odd primes $l$ satisfying the following three conditions:

        \begin{itemize}

        \item [(i)] $\gcd(l, 3) = \gcd(l, p) = \gcd(l, \kappa) = 1$.

        \item [(ii)] $\left(\dfrac{p}{l}\right) = \left(\dfrac{-p}{l}\right) = -1$.

        \item [(iii)] $l$ divides $n$.

        \end{itemize}
We assume that $3^6\kappa^6 - 1$ is a quadratic non-residue in $\bF_l^{\times}$ or $\kappa \equiv \dfrac{1}{3} \pmod{l^{2v_l(n) + 1}}$ for each prime $l \in \A$.

\item [(C3)] let $\B$ be the set of odd primes $l$ satisfying the following three conditions:

        \begin{itemize}

        \item [(i)] $\gcd(l, 3) = \gcd(l, p) = \gcd(l, \kappa) = \gcd(l, n) = 1$.

        \item [(ii)] $\left(\dfrac{p}{l}\right) = \left(\dfrac{-p}{l}\right) = -1$.

        \item [(iii)] $l \le 4(6n - 1)^2$.

        \end{itemize}
We assume that $3^6\kappa^6 - 1$ is a quadratic non-residue in $\bF_l^{\times}$ or $\kappa \equiv \dfrac{1}{3} \pmod{l}$ for each prime $l \in \B$.

\end{itemize}
Let $\cD$ be the smooth projective model of the affine curve defined by $(\ref{the-generalized-Mordell-curve-D-equation})$ in Corollary \ref{the-generalized-Mordell-curve-D-have-no-rational-points-corollary}. Then $\cD$ is a counterexample to the Hasse principle explained by the Brauer-Manin obstruction.

\end{theorem}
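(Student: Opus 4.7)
Since Corollary~\ref{the-generalized-Mordell-curve-D-have-no-rational-points-corollary} already gives $\cD(\bA_{\bQ})^{\Br} = \emptyset$ (so in particular $\cD(\bQ) = \emptyset$), the theorem reduces to verifying $\cD(\bQ_l) \ne \emptyset$ at every place $l$ of $\bQ$. The strategy is a case analysis based on the quadratic character $\bigl(\tfrac{p}{l}\bigr)$, with conditions (C1)--(C3) switching on exactly at the hard places.

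At $l = \infty$ take $x = 1$, so $z = \sqrt{(3^6\kappa^6 - 1)/p} \in \bR$ exists by (B3). At $l = 3$, since $3 \mid \kappa$ and $p \equiv 2 \pmod 3$, reducing at $x = 1$ yields $pz^2 \equiv -1 \pmod 3$, and Hensel lifts $(1, \pm 1)$ to $\cD(\bQ_3)$. At every $l$ with $\bigl(\tfrac{p}{l}\bigr) = 1$ -- which covers $l = 2$ by $p \equiv 1 \pmod 8$ and every odd $l \mid \kappa$ by (B5) -- the smooth projective model carries two $\bQ_l$-rational points at infinity (where $z/x^{6n} \to \pm 3^3 \kappa^3/\sqrt p$). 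At odd $l$ with $\bigl(\tfrac{p}{l}\bigr) = -1$ and $\bigl(\tfrac{-p}{l}\bigr) = 1$, the affine point $x = 0$, $z^2 = -1/p$ is already in $\bQ_l$. At $l = p$, apply Hensel to $f(x) := 3^6\kappa^6 x^{12n} - 1$ at $x = 1$, $z = 0$: condition (C1) gives $v_p(f(1)) \ge 2v_p(n) + 1$ while $v_p(f'(1)) = v_p(n)$, producing $x^* \in \bZ_p$ with $f(x^*) = 0$, and thus $(x^*, 0) \in \cD(\bQ_p)$.

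What remains are odd $l \ne p$ with $\bigl(\tfrac{p}{l}\bigr) = \bigl(\tfrac{-p}{l}\bigr) = -1$ and $\gcd(l, 3\kappa) = 1$. For the ``large'' sub-case $l > 4(6n - 1)^2$ with $l \nmid n$, the curve $\cD$ has good reduction mod $l$ and genus $6n - 1$, so Lemma~\ref{Hasse-Weil-bound-lemma} gives $\#\cD(\bF_l) \ge l + 1 - 2(6n - 1)\sqrt l > 0$, and any resulting smooth $\bF_l$-point lifts by Hensel. Otherwise $l$ lies in $\A$ from (C2) (if $l \mid n$) or in $\B$ from (C3) (if $l \nmid n$ and $l \le 4(6n-1)^2$), and the hypothesis supplies one of two lifts. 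If $\kappa \equiv 1/3 \pmod{l^{2v_l(n)+1}}$, the same Hensel argument used at $l = p$ yields $(x^*, 0) \in \cD(\bQ_l)$. If instead $3^6\kappa^6 - 1$ is a non-residue modulo $l$, then $(3^6\kappa^6 - 1)/p$ is a non-zero square in $\bF_l^\times$, and Hensel lifts a solution of $pz^2 = 3^6\kappa^6 - 1$ at $x = 1$ to a point in $\cD(\bQ_l)$.

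The main obstacle is less any individual Hensel lift than the bookkeeping: one has to check that the sets $\A$, $\B$, and the Hasse--Weil regime jointly exhaust every odd $l \ne p$ with $\bigl(\tfrac{p}{l}\bigr) = \bigl(\tfrac{-p}{l}\bigr) = -1$ and $\gcd(l, 3\kappa) = 1$, and that the exponent $2v_l(n) + 1$ in (C1)--(C2) is tuned precisely to overcome the factor $v_l(n)$ contributed by $f'(1) = 12n \cdot 3^6\kappa^6$. For $l \in \B$ one has $v_l(n) = 0$, which is why the weaker congruence $\kappa \equiv 1/3 \pmod l$ in (C3) already suffices.
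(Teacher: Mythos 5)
Your proof is correct and follows essentially the same strategy as the paper's: invoke Corollary~\ref{the-generalized-Mordell-curve-D-have-no-rational-points-corollary} for the Brauer-Manin part, then partition the places of $\bQ$ according to whether $p$ or $-p$ is a local square, handle the ``split'' places via explicit points (at infinity or with $x=0$), handle $l=p$ and the finitely many ``inert'' small primes via Hensel lifts keyed to the congruences in (C1)--(C3), and dispatch the remaining large inert primes by the Hasse--Weil bound. The only divergences from the paper are cosmetic choices of which explicit local point to exhibit: you pick the affine point $(1,\sqrt{(3^6\kappa^6-1)/p})$ at $l=\infty$ and Hensel at $(1,\pm 1)$ for $l=3$, whereas the paper places $\infty$ in the same case as $(p/l)=1$ via the points at infinity and notes $3\in\S_1$ because $(-p/3)=1$, then reuses the point $(0,\sqrt{-p}/p)$; both routes are valid.
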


\begin{proof}

By Corollary \ref{the-generalized-Mordell-curve-D-have-no-rational-points-corollary}, we know that $\cD(\bA_{\bQ})^{\mathrm{Br}} = \emptyset$. Hence it remains to prove that $\cD$ is everywhere locally solvable.

Let $\S_1$ be the set of odd primes $l$ with $\gcd(l, p) = 1$ such that $\left(\dfrac{p}{l}\right) = 1$ or $\left(\dfrac{-p}{l}\right) = 1$. Let $\S_2$ be the set of odd primes $l$ with $\gcd(l, p) = 1$ such that $\left(\dfrac{p}{l}\right) = \left(\dfrac{-p}{l}\right) = -1$. Since $-p \equiv -2 \equiv 1 \pmod{3}$, we see that $3$ belongs to $\S_1$. By $(B5)$, we also know that if $l$ is any odd prime such that $\gcd(l, 3) = 1$ and $l$ divides $\kappa$, then $l$ belongs to $\S_1$. We know that
\begin{align*}
\left\{\text{the set of all primes}\right\} = \{2\} \cup \{p\} \cup \S_1 \cup \S_2.
\end{align*}
It suffices to consider the following cases.

$\star$ \textit{Case 1. $l = p$.}

We consider the system of equations
\begin{align}
\label{Hensel-system-of-D-at-p-equations}
\begin{cases}
F(x, z) := 3^6\kappa^6x^{12n} - 1 - pz^2 &\equiv 0 \pmod{p^{2v_p(n) + 1}} \\
\tfrac{\partial F}{\partial x}(x, z) = 2^23^7\kappa^6nx^{12n - 1} &\equiv 0 \pmod{p^{v_p(n)}} \\
\tfrac{\partial F}{\partial x}(x, z) &\not\equiv 0 \pmod{p^{v_p(n) + 1}}.
\end{cases}
\end{align}
By $(C1)$, we see that
\begin{align*}
F(1, 0) = (3\kappa)^6 - 1 \equiv 0 \pmod{p^{2v_p(n) + 1}}.
\end{align*}
Since $p \ne 2, 3$ and $\gcd(\kappa, p) = 1$, we deduce that
\begin{align*}
\dfrac{\partial F}{\partial x}(1, 0) &= 2^23^7\kappa^6n \equiv 0 \pmod{p^{v_p(n)}}, \\
\dfrac{\partial F}{\partial x}(1, 0) &= 2^23^7\kappa^6n \not\equiv 0 \pmod{p^{v_p(n) + 1}}. \\
\end{align*}
Hence $(1, 0)$ is a solution to the system $(\ref{Hensel-system-of-D-at-p-equations})$, and it thus follows from Hensel's lemma that $\cD$ is locally solvable at $p$.

$\star$ \textit{Case 2. $l = \infty$, $l = 2$ or $l \in \S_1$.}

We see that the curve $\cD^{\ast}$ defined by
\begin{align*}
\cD^{\ast} : pz^2 = 3^6\kappa^6x^{12n} - y^{12n}
\end{align*}
is an open subscheme of $\cD$. Assume first that $l = \infty$, $l = 2$ or $l$ is an odd prime such that $\left(\dfrac{p}{l}\right) = 1$. Define
\begin{align*}
P_1 := (x : y : z) = \left(p : 0 : 3^3\kappa^3p^{6n - 1}\sqrt{p}\right).
\end{align*}
Since $\sqrt{p} \in \bQ_l^{\times}$, we deduce that $P_1 \in \cD^{\ast}(\bQ_l) \subseteq \cD(\bQ_l)$.

Suppose now that $l$ is an odd prime such that $\left(\dfrac{-p}{l}\right) = 1$. We define
\begin{align*}
P_2 := (x, z) = \left(0, \dfrac{\sqrt{-p}}{p}\right).
\end{align*}
Since $\sqrt{-p} \in \bQ_l^{\times}$, we deduce that $P_2$ belongs to $\cD(\bQ_l)$. Hence, in any event, $\cD$ is locally solvable at $l$.

$\star$ \textit{Case 3. $l \in \S_2$.}

By the discussion preceding \textit{Case 1}, one can assume that $\gcd(l, 3) = \gcd(l, \kappa) = 1$. We know that the discriminant of the curve $\cD$ is
\begin{align*}
\text{Discriminant}(\cD) = -3^{6(12n - 1)}\kappa^{6(12n - 1)}p^{2(12n - 1)}(12n)^{12n}.
\end{align*}
Hence $\cD$ is only singular at the primes $q = 2, 3, p$, and the primes $q$ dividing $\kappa n$; so the Hasse-Weil bound (see Lemma \ref{Hasse-Weil-bound-lemma}) assures that $\cD$ is locally solvable at primes $q > 4(6n - 1)^2$ such that $q \ne 2, 3, p$ and $\gcd(q, \kappa) = \gcd(q, n) = 1$. Hence, by \textit{Cases 1 and 2}, we only need to prove that $\cD$ is locally solvable at the primes $l$, where $l \in \A$ or $l \in \B$.

$\bullet$ \textit{Subcase 1. $l \in \A$.}

Since $l \in \A$, it follows from $(C2)$ that $3^6\kappa^6 - 1$ is a quadratic non-residue in $\bF_l^{\times}$ or $\kappa \equiv \dfrac{1}{3} \pmod{l^{2v_l(n) +1}}$. Assume first that $3^6\kappa^6 - 1$ is a quadratic non-residue in $\bF_l^{\times}$. We consider the system of equations
\begin{align}
\label{Hensel-system-of-D-at-l-dividing-n-equations}
\begin{cases}
F(x, z) = 3^6\kappa^6x^{12n} - 1 - pz^2 &\equiv 0 \pmod{l} \\
\tfrac{\partial F}{\partial z}(x, z) = -2pz &\not\equiv 0 \pmod{l}. \\
\end{cases}
\end{align}
Since $l$ belongs to $\A$, we know that $p$ is a quadratic non-residue in $\bF_l^{\times}$. It then follows that $\tfrac{3^6\kappa^6 - 1}{p}$ is a square in $\bF_l^{\times}$, and thus there exists an integer $z_0$ such that $z_0 \not\equiv 0 \pmod{l}$ and $\tfrac{3^6\kappa^6 - 1}{p} \equiv z_0^2 \pmod{l}$. Hence we deduce that
$F(1, z_0)$ is zero modulo $l$. Since $l$ does not divide $z_0$, it follows that $\tfrac{\partial F}{\partial z}(1, z_0) = -2pz_0 \not\equiv 0 \pmod{l}$. Thus $(1, z_0)$ is a solution to the system $(\ref{Hensel-system-of-D-at-l-dividing-n-equations})$, and therefore it follows from Hensel's lemma that $\cD$ is locally solvable at $l$.

Suppose now that $\kappa \equiv \dfrac{1}{3} \pmod{l^{2v_l(n) + 1}}$. Since $l \ne 2, 3$ and $\gcd(l, p) = \gcd(l, \kappa) = 1$, one can show that the point $(1, 0)$ is a solution to the system of equations
\begin{align*}
\begin{cases}
F(x, z) = 3^6\kappa^6x^{12n} - 1 - pz^2 &\equiv 0 \pmod{l^{2v_l(n) + 1}} \\
\dfrac{\partial F}{\partial x}(x, z) = 2^23^7\kappa^6nx^{12n - 1} &\equiv 0 \pmod{l^{v_l(n)}} \\
\dfrac{\partial F}{\partial x}(x, z) &\not\equiv 0 \pmod{l^{v_l(n) + 1}}.
\end{cases}
\end{align*}
By Hensel's lemma, we deduce that $\cD$ is locally solvable at $l$.

$\bullet$ \textit{Subcase 2. $l \in \B$.}

Since $l \in \B$, it follows from $(C3)$ that $3^6\kappa^6 - 1$ is a quadratic non-residue in $\bF_l^{\times}$ or $\kappa \equiv \dfrac{1}{3} \pmod{l}$. Assume first that $3^6\kappa^6 - 1$ is a quadratic non-residue in $\bF_l^{\times}$. Then, using the same arguments as in \textit{Subcase 1}, we deduce that $\dfrac{3^6\kappa^6 - 1}{p}$ is a square in $\bF_l^{\times}$, and thus there exists an integer $z_0$ such that $z_0 \not\equiv 0 \pmod{l}$ and $\dfrac{3^6\kappa^6 - 1}{p} \equiv z_0^2 \pmod{l}$. Repeating in the same manner as in \textit{Subcase 1}, we see that $(1, z_0)$ is a solution to the system of equations
\begin{align*}
\begin{cases}
F(x, z) = 3^6\kappa^6x^{12n} - 1 - pz^2 &\equiv 0 \pmod{l} \\
\dfrac{\partial F}{\partial z}(x, z) = -2pz &\not\equiv 0 \pmod{l}. \\
\end{cases}
\end{align*}
By Hensel's lemma, we deduce that $\cD$ is locally solvable at $l$.

Suppose now that $\kappa \equiv \dfrac{1}{3} \pmod{l}$. Since $l \ne 2, 3$ and $\gcd(l, p) = \gcd(l, \kappa) = \gcd(l, n) = 1$, one can show that the point $(1, 0)$ is a solution to the system of equations
\begin{align*}
\begin{cases}
F(x, z) = 3^6\kappa^6x^{12n} - 1 - pz^2 &\equiv 0 \pmod{l} \\
\dfrac{\partial F}{\partial x}(x, z) = 2^23^7\kappa^6nx^{12n - 1} &\not\equiv 0 \pmod{l}. \\
\end{cases}
\end{align*}
By Hensel's lemma, we deduce that $\cD$ is locally solvable at $l$.

Therefore, in any event, $\cD$ is everywhere locally solvable, and hence our contention follows.

\end{proof}

\subsection{Infinitude of the triples $(p, n, \kappa)$}
\label{infinitude-of-the-triples-p-n-kappa-subsection}

In this subsection, we will prove that for a prime $p \equiv 1 \pmod{8}$ and a positive integer $n \ge 2$, there are infinitely many nonzero integers $\kappa$ satisfying the conditions in Theorem \ref{the-first-infinite-family-of-generalized-Mordell-curves-violates-the-Hasse-principle-theorem}. The main result in this subsection is the following lemma.

\begin{lemma}
\label{infinitude-of-the-triples-p-n-kappa-lemma}

Let $p$ be a prime such that $p \equiv 1 \pmod{8}$ and $p \equiv 2 \pmod{3}$. Let $n$ and $m$ be two integers such that $n \ge 2$ and $ 1 \le m < n$. Then, for any positive integer $r$ dividing $m$, there are infinitely many nonzero integers $\kappa$ of the form
\begin{align*}
\kappa = 3^{2m - 1}\kappa_{\ast}^r,
\end{align*}
where $\kappa_{\ast}$ is an odd prime with $\gcd(\kappa_{\ast}, 3) = 1$ such that $\kappa$ satisfies $(B3)-(B5)$ and $(C1)-(C3)$.

\end{lemma}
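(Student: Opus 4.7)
My plan is to construct $\kappa_*$ via Dirichlet's theorem so that a single congruence condition on $\kappa_*$ simultaneously secures (B4), (B5), (C1), (C2), and (C3); the shape $\kappa = 3^{2m-1}\kappa_*^r$ will then take care of (B3) automatically, since $v_3(\kappa) = 2m-1$ is odd, positive (as $m \ge 1$), and strictly less than $2n-1$ (as $m < n$).

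The key trick is to exploit the hypothesis $r \mid m$: writing $m = rs$, if I arrange a single congruence $\kappa_* \equiv 3^{-2s} \pmod{N}$ for a suitable modulus $N$, then raising to the $r$-th power yields $\kappa_*^r \equiv 3^{-2m} \pmod{N}$, whence $3\kappa = 3^{2m}\kappa_*^r \equiv 1 \pmod{N}$, i.e.\ $\kappa \equiv 1/3 \pmod{N}$. I let
$$\A_0 := \{l \text{ odd prime} : l \mid n,\ l \ne 3, p,\ (p/l) = (-p/l) = -1\},$$
$$\B_0 := \{l \text{ odd prime} : l \ne 3, p,\ \gcd(l,n) = 1,\ l \le 4(6n-1)^2,\ (p/l) = (-p/l) = -1\},$$
and set
$$N := p^{2v_p(n) + 1} \prod_{l \in \A_0} l^{2v_l(n) + 1} \prod_{l \in \B_0} l.$$
These products range over pairwise disjoint primes all coprime to $3$, so $\gcd(3, N) = 1$ and the residue class of $3^{-2s}$ modulo $N$ is well-defined and coprime to $N$. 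By Dirichlet's theorem there are infinitely many primes $\kappa_*$ satisfying $\kappa_* \equiv 3^{-2s} \pmod{N}$, and by discarding finitely many of them I may further assume $\kappa_*$ is odd, distinct from $3, p$, coprime to $n$, and strictly larger than $4(6n-1)^2$.

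With this $\kappa_*$, conditions (C1), (C2), (C3) all reduce to the componentwise congruence $\kappa \equiv 1/3$ modulo the relevant prime-power factor of $N$, noting that the actual sets $\A, \B$ (which depend on $\kappa$) are contained in $\A_0, \B_0$ by the size bound on $\kappa_*$. Condition (B4) is immediate because $\kappa_* \equiv 3^{-2s} \not\equiv 0 \pmod{p}$. For (B5), the only odd prime coprime to $3$ dividing $\kappa$ is $\kappa_*$ itself, and $\kappa_* \equiv 3^{-2s} = (3^{-s})^2 \pmod{p}$ is a square in $\bF_p^{\times}$; since $p \equiv 1 \pmod 4$, quadratic reciprocity gives $(p/\kappa_*) = (\kappa_*/p) = 1$, and Hensel's lemma lifts this to show that $p$ is a square in $\bQ_{\kappa_*}^{\times}$.

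The main subtlety in the argument is the bookkeeping that ensures the single modulus $N$ and the single residue class $3^{-2s}$ genuinely capture every one of (C1)--(C3) simultaneously; once the divisibility $r \mid m$ has been used to collapse the exponent on the $\kappa_*$ side, everything else is a direct verification.
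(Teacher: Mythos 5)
Your proposal is correct and follows essentially the same route as the paper: define $s := m/r$, build the modulus $N = p^{2v_p(n)+1}\prod_{l \in \A_0}l^{2v_l(n)+1}\prod_{l \in \B_0}l$ (the paper calls these sets $\A^{\ast},\B^{\ast}$ and first constructs a CRT solution $\kappa_{\ast,0}$ before invoking Dirichlet on an arithmetic progression, but this is the same thing as your one-shot congruence), pick a prime $\kappa_{\ast} \equiv 3^{-2s} \pmod N$, and set $\kappa = 3^{2m-1}\kappa_{\ast}^r$ so that $r \mid m$ collapses the exponent to give $\kappa \equiv 1/3$ modulo each relevant prime power. Two minor remarks: the containments $\A \subseteq \A_0$ and $\B \subseteq \B_0$ are automatic from the definitions (the extra $\gcd(l,\kappa)=1$ clause only shrinks the set), so your appeal to the size bound on $\kappa_{\ast}$ there is unnecessary (though harmless); and your extra step of discarding finitely many primes is likewise superfluous, since any prime satisfying $\kappa_{\ast} \equiv 3^{-2s}\pmod N$ is already automatically coprime to $N$ and hence distinct from $p$ and from every $l \in \A_0 \cup \B_0$.
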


\begin{proof}

Let $r$ be any positive integer such that $r$ divides $m$, and define
\begin{align*}
s := \dfrac{m}{r}.
\end{align*}
Let $\A^{\ast}$ be the set of odd primes $l$ satisfying the following three conditions:

\begin{itemize}

\item [(1)] $\gcd(l, 3) = \gcd(l, p) = 1$,

\item [(2)] $\left(\dfrac{p}{l}\right) = \left(\dfrac{-p}{l}\right) = -1$, and

\item [(3)] $l$ divides $n$.

\end{itemize}
Let $\B^{\ast}$ be the set of odd primes $l$ satisfying the following three conditions:

\begin{itemize}

\item [(1)] $\gcd(l, 3) = \gcd(l, p) = \gcd(l, n) = 1$,

\item [(2)] $\left(\dfrac{p}{l}\right) = \left(\dfrac{-p}{l}\right) = -1$, and

\item [(3)] $l \le 4(6n - 1)^2$.

\end{itemize}
It is easy to see that $p$ does not belong to $\A^{\ast} \cup \B^{\ast}$ and $\A^{\ast} \cap \B^{\ast} = \emptyset$. We also note that $\A^{\ast}$ and $\B^{\ast}$ are of finite cardinality. Hence, by the Chinese Remainder Theorem, there exists an integer $\kappa_{{\ast}, 0}$ satisfying the following conditions:

\begin{itemize}

\item [(i)] $\kappa_{{\ast}, 0} \equiv \dfrac{1}{3^{2s}} \pmod{p^{2v_p(n) + 1}}$,

\item [(ii)] $\kappa_{{\ast}, 0} \equiv \dfrac{1}{3^{2s}} \pmod{l^{2v_l(n) + 1}}$ for each prime $l \in \A^{\ast}$, and

\item [(iii)] $\kappa_{{\ast}, 0} \equiv \dfrac{1}{3^{2s}} \pmod{l}$ for each prime $l \in \B^{\ast}$.

\end{itemize}
Let $P(X) \in \bZ[X]$ be the linear polynomial defined by
\begin{align}
\label{the-equation-of-the-generating-prime-polynomial-P(x)-in-the-lemma-about-infinitude-of-the-triples-p-n-kappa-equation}
P(X) := p^{2v_p(n) + 1}\left(\prod_{l \in \A^{\ast}}l^{2v_l(n) + 1}\right)\left(\prod_{l \in \B^{\ast}}l\right)X + \kappa_{{\ast}, 0}.
\end{align}
By $(i), (ii)$ and $(iii)$ above, we know that $\gcd(\kappa_{{\ast}, 0}, p) = \gcd(\kappa_{{\ast}, 0}, l) = 1$ for any prime $l \in \A^{\ast} \cup \B^{\ast}$. Using Dirichlet's theorem on arithmetic progressions, we deduce that there are infinitely many integers $X$ such that $P(X) \ne 3$ and $P(X)$ is an odd prime. Take such an integer $X$, and let $\kappa_{\ast} := P(X)$. We define
\begin{align*}
\kappa := 3^{2m - 1}\kappa_{\ast}^r.
\end{align*}
We contend that $\kappa$ satisfies our requirements. Indeed, since $\gcd(\kappa_{\ast}, 3) = 1$, we see that
\begin{align*}
v_3(\kappa) = v_3\left(3^{2m - 1}\kappa_{\ast}^r\right) = 2m - 1 < 2n - 1.
\end{align*}
Hence $\kappa$ satisfies $(B3)$ in Section \ref{a-subset-of-the-set-of-rational-points-on-X-p-section}.

By $(i)$ and $(\ref{the-equation-of-the-generating-prime-polynomial-P(x)-in-the-lemma-about-infinitude-of-the-triples-p-n-kappa-equation})$ above, one knows that
\begin{align}
\label{the-congruence-mod-p-of-kappa-ast-equation}
\kappa_{\ast} = P(X) \equiv \kappa_{{\ast}, 0} \equiv \dfrac{1}{3^{2s}} \pmod{p^{2v_p(n) + 1}},
\end{align}
and hence
\begin{align*}
\kappa_{\ast} \equiv \left(\dfrac{1}{3^{s}}\right)^2 \not\equiv 0 \pmod{p}.
\end{align*}
Thus $\kappa_{\ast}$ is a quadratic residue in $\bF_p^{\times}$, and therefore it follows from the quadratic reciprocity law that $p$ is a square in $\bF_{\kappa_{\ast}}^{\times}$. Therefore $\kappa$ satisfies $(B4)$ and $(B5)$.

By $(\ref{the-congruence-mod-p-of-kappa-ast-equation})$ and the definition of $\kappa$, we see that
\begin{align*}
\kappa = 3^{2m - 1}\kappa_{\ast}^r \equiv \dfrac{3^{2m - 1}}{3^{2rs}} \equiv \dfrac{3^{2m - 1}}{3^{2m}} \equiv \dfrac{1}{3} \pmod{p^{2v_p(n) + 1}},
\end{align*}
which proves that $(C1)$ in Theorem \ref{the-first-infinite-family-of-generalized-Mordell-curves-violates-the-Hasse-principle-theorem} is true.

Now let $\A$ and $\B$ be the sets of odd primes defined as in $(C2)$ and $(C3)$ in Theorem \ref{the-first-infinite-family-of-generalized-Mordell-curves-violates-the-Hasse-principle-theorem}. One sees that
\begin{align*}
\A &= \left\{l \in \A^{\ast} | \gcd(l, \kappa) = 1 \right\} \subseteq \A^{\ast}, \\
\B &= \left\{l \in \B^{\ast} | \gcd(l, \kappa) = 1 \right\} \subseteq \B^{\ast}.
\end{align*}
By $(ii)$ and $(\ref{the-equation-of-the-generating-prime-polynomial-P(x)-in-the-lemma-about-infinitude-of-the-triples-p-n-kappa-equation})$, we know that
\begin{align*}
\kappa = 3^{2m - 1}\kappa_{\ast}^r \equiv 3^{2m - 1}\kappa_{{\ast}, 0}^r \equiv \dfrac{3^{2m - 1}}{3^{2rs}} \equiv \dfrac{3^{2m - 1}}{3^{2m}} \equiv \dfrac{1}{3} \pmod{l^{2v_l(n) + 1}}
\end{align*}
for any prime $l \in \A^{\ast}$. Since $\A \subseteq \A^{\ast}$, we deduce that $\kappa \equiv \dfrac{1}{3} \pmod{l^{2v_l(n) + 1}}$ for any prime $l \in \A$, and thus $(C2)$ holds. Finally, using the same arguments as above, it follows from  $(iii)$ and $(\ref{the-equation-of-the-generating-prime-polynomial-P(x)-in-the-lemma-about-infinitude-of-the-triples-p-n-kappa-equation})$ that $\kappa \equiv \dfrac{1}{3} \pmod{l}$ for any prime $l \in \B^{\ast}$. Since $\B \subseteq \B^{\ast}$, we deduce that $\kappa \equiv \dfrac{1}{3} \pmod{l}$ for any prime $l \in \B$, and thus $(C3)$ is true. Therefore our contention follows.

\end{proof}

\begin{corollary}
\label{infinitude-of-non-isomorphic-generalized-Mordell-curves-violating-the-Hasse-principle-corollary}

Let $p$ be a prime such that $p \equiv 1 \pmod{8}$ and $p \equiv 2 \pmod{3}$. Let $n$ be an integer such that $n \ge 2$. Then there are infinitely many non-isomorphic generalized Mordell curves of degree $12n$ that are counterexamples to the Hasse principle explained by the Brauer-Manin obstruction.

\end{corollary}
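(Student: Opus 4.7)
The plan is to combine Lemma \ref{infinitude-of-the-triples-p-n-kappa-lemma} with Theorem \ref{the-first-infinite-family-of-generalized-Mordell-curves-violates-the-Hasse-principle-theorem} to produce an infinite collection of Brauer--Manin counterexamples, and then to extract an infinite pairwise non-isomorphic subfamily via an analysis of hyperelliptic automorphisms. First I would fix any integer $m$ with $1 \le m < n$ (for instance $m = 1$) and take $r = 1$; then Lemma \ref{infinitude-of-the-triples-p-n-kappa-lemma} yields infinitely many pairwise distinct odd primes $\kappa_{\ast} \neq 3$, each coprime to $6pn$, such that the integer $\kappa := 3^{2m-1}\kappa_{\ast}$ satisfies $(B3)$--$(B5)$ and $(C1)$--$(C3)$. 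For every such $\kappa$, Corollary \ref{the-generalized-Mordell-curve-D-have-no-rational-points-corollary} gives $\cD^{(\kappa)}(\bA_\bQ)^{\mathrm{Br}} = \emptyset$, while Theorem \ref{the-first-infinite-family-of-generalized-Mordell-curves-violates-the-Hasse-principle-theorem} ensures $\cD^{(\kappa)}(\bA_\bQ) \neq \emptyset$. Hence each $\cD^{(\kappa)} : pz^2 = 3^6\kappa^6 x^{12n} - 1$ is a generalized Mordell curve of degree $12n$ that is a counterexample to the Hasse principle explained by the Brauer--Manin obstruction.

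The remaining, and main, task is to show that, after possibly discarding finitely many indices, the curves $\cD^{(\kappa)}$ are pairwise non-isomorphic over $\bQ$. Since $\cD^{(\kappa)}$ has genus $6n - 1 \ge 11$, its hyperelliptic involution $z \mapsto -z$ is unique, so any $\bQ$-isomorphism $\cD^{(\kappa)} \xrightarrow{\sim} \cD^{(\kappa')}$ descends via the $x$-coordinate to a M\"obius transformation $\phi(x) = (\alpha x + \beta)/(\gamma x + \delta) \in \mathrm{PGL}_2(\bQ)$ satisfying an identity of the form
\begin{align*}
3^6(\kappa')^6(\alpha x + \beta)^{12n} - (\gamma x + \delta)^{12n} = \lambda^2\bigl(3^6\kappa^6 x^{12n} - 1\bigr)
\end{align*}
for some $\lambda \in \bQ^{\times}$. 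Comparing the coefficients of $x^k$ for $1 \le k \le 12n - 1$ on both sides, I would show that $\phi$ must be either pure scaling $\phi(x) = \mu x$ or pure inversion $\phi(x) = \mu/x$; the inversion case and all mixed cases are ruled out because they force $-1$ to be a square in $\bQ^{\times}$. Matching the leading and constant terms in the pure-scaling case then gives $\lambda^2 = 1$ together with $\kappa/\kappa' \in \pm(\bQ^{\times})^{2n}$. But $\kappa/\kappa' = \kappa_{\ast}/\kappa_{\ast}'$, a ratio of distinct primes, whose $\kappa_{\ast}$-adic valuation is $\pm 1$ and hence is not divisible by $2n \ge 2$. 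Therefore $\cD^{(\kappa)}$ and $\cD^{(\kappa')}$ are non-isomorphic whenever $\kappa_{\ast} \neq \kappa_{\ast}'$, and the desired infinite family is obtained.

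The main obstacle I anticipate is the M\"obius-transformation case analysis in the second paragraph, in particular the degenerate cases in which one or more of $\alpha, \beta, \gamma, \delta$ vanishes (notably pure inversion); these must be excluded via the $\bQ$-non-squareness of $-1$ before the problem reduces cleanly to pure scalings. Once that reduction is in hand, the prime-adic valuation argument finishes the job immediately.
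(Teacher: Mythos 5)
Your proof is correct and follows the same overall strategy as the paper: invoke Lemma \ref{infinitude-of-the-triples-p-n-kappa-lemma} together with Theorem \ref{the-first-infinite-family-of-generalized-Mordell-curves-violates-the-Hasse-principle-theorem} to get an infinite family $\cD^{(\kappa)}$ of counterexamples, then rule out isomorphisms by applying a $\kappa_{\ast}$-adic valuation to show the ratio $\kappa/\kappa'$ cannot be a $2n$-th power up to sign. The one difference is that the paper simply asserts ``Hence we deduce that the rational number $3^6\kappa_1^6/3^6\kappa_2^6$ is a $12n$-th power,'' whereas you actually justify this step via the uniqueness of the hyperelliptic involution and the M\"obius-transformation case analysis (pure scaling vs.\ pure inversion), so your write-up supplies a detail the paper leaves implicit.
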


\begin{proof}

Let $m$ be any integer such that $1 \le m < n$. Let $r$ be any positive integer such that $r$ divides $m$. Let $\C$ be the set of nonzero integers $\kappa$ satisfying the following conditions:

\begin{itemize}

\item [(1)] $\kappa = 3^{2m - 1}\kappa_{\ast}^r$ for some odd prime $\kappa_{\ast} \ne 3$, and

\item [(2)] $\kappa$ satisfies $(B3)-(B5)$ and $(C1)-(C3)$.

\end{itemize}
By Lemma \ref{infinitude-of-the-triples-p-n-kappa-lemma}, we know that $\C$ is of infinite cardinality. For each integer $\kappa \in \C$, let $\cD_{\kappa}$ be the smooth projective model of the affine curve defined by
\begin{align*}
\cD_{\kappa} : pz^2 = 3^6\kappa^6x^{12n} - 1.
\end{align*}
By Theorem \ref{the-first-infinite-family-of-generalized-Mordell-curves-violates-the-Hasse-principle-theorem}, we deduce that $\cD_{\kappa}$ is a counterexample to the Hasse principle explained by the Brauer-Manin obstruction for each $\kappa \in \C$. We contend that $\cD_{\kappa_1}$ is not isomorphic to $\cD_{\kappa_2}$ for any two distinct integers $\kappa_1, \kappa_2 \in \C$. Assume the contrary, that is, $\cD_{\kappa_1}$ is isomorphic to $\cD_{\kappa_2}$ for some distinct integers $\kappa_1, \kappa_2 \in \C$. Hence we deduce that the rational number $\dfrac{3^6\kappa^6_1}{3^6\kappa_2^6}$ is a $12n$-th power. Since $\kappa_1$ and $\kappa_2$ are in $\C$, we can write
\begin{align*}
\kappa_1 &= 3^{2m - 1}\kappa_{\ast, 1}^r, \\
\kappa_2 &= 3^{2m - 1}\kappa_{\ast, 2}^r,
\end{align*}
where $\kappa_{\ast, 1}$ and $\kappa_{\ast, 2}$ are distinct odd primes such that $\kappa_{\ast, 1} \ne 3$ and $\kappa_{\ast, 2} \ne 3$. Thus it follows that $\left(\dfrac{\kappa_{\ast, 1}}{\kappa_{\ast, 2}}\right)^{6r}$ is a $12n$-th power. Hence there exists a rational number $\rho \in \bQ$ such that
\begin{align*}
\left(\dfrac{\kappa_{\ast, 1}}{\kappa_{\ast, 2}}\right)^{6r} = (\rho)^{12n}.
\end{align*}
Applying the $\kappa_{\ast, 1}$-adic valuation to both sides of the above identity, we deduce that
\begin{align*}
6r = v_{\kappa_{\ast, 1}}\left(\left(\dfrac{\kappa_{\ast, 1}}{\kappa_{\ast, 2}}\right)^{6r}\right) = v_{\kappa_{\ast, 1}}\left((\rho)^{12n}\right) = 12nv_{\kappa_{\ast, 1}}(\rho),
\end{align*}
and hence
\begin{align*}
r = 2nv_{\kappa_{\ast, 1}}(\rho).
\end{align*}
Thus we deduce that $n$ divides $r$, which is a contradiction since $1 \le r \le m < n$. Therefore $\cD_{\kappa_1}$ is not isomorphic to $\cD_{\kappa_2}$ for any two distinct integers $\kappa_1, \kappa_2 \in \C$. Hence we deduce that $\left(\cD_{\kappa}\right)_{\kappa \in \C}$ is an infinite family of non-isomorphic generalized Mordell curves of degree $12n$ such that each member in the family $\left(\cD_{\kappa}\right)_{\kappa \in \C}$ is a counterexample to the Hasse principle explained by the Brauer-Manin obstruction. Therefore our contention follows.

\end{proof}

\section{The descending chain condition on sequences of curves}
\label{the-descending-chain-condition-on-sequences-of-curves-section}

In this section, we will introduce the notion of the \textit{descending chain condition (DCC) on sequences of curves}. This notation provides an analogue of the notion of the descending chain condition on partially ordered sets. Moreover, we will show that the families of generalized Mordell curves constructed in Section \ref{certain-generalized-Mordell-curves-violate-the-Hasse-principle-section} satisfy the DCC.

\begin{definition}
\label{the-descending-chain-condition-on-a-sequence-of-curves-definition}

Let $\left(\cX_i\right)_{i \ge 1}$ be a sequence of smooth geometrically irreducible projective curves over a global field $k$. For each integer $i \ge 1$, let $\phi_i : \cX_{i + 1} \longrightarrow \cX_i$ be a $k$-morphism of curves. We say that $\left(\cX_i, \phi_i\right)_{i \ge 1}$ satisfies the \textit{descending chain condition} (DCC) if there exists a positive integer $n$ such that the following are true:

\begin{itemize}

\item [(DCC1)] $\cX_i(k) \ne \emptyset$ for each $1 \le i \le n - 1$.

\item [(DCC2)] $\cX_n$ is a counterexample to the Hasse principle explained by the Brauer-Manin obstruction, i.e., $\cX_n(\bA_k)^{\mathrm{Br}} = \emptyset$ but $\cX_n(\bA_k) \ne \emptyset$.

\item [(DCC3)] $g_i < g_j$ for any two positive integers $i, j$ with $1 \le i < j$, where $g_i$ is the genus of $\cX_i$ for each positive integer $i$.

\end{itemize}

\end{definition}

It is not difficult to see that the integer $n$ in Definition \ref{the-descending-chain-condition-on-a-sequence-of-curves-definition} is \textit{unique}. When $n$ satisfies (DCC1)-(DCC3), we define the \textit{length} of $\left(\cX_i, \phi_i\right)_{i \ge 1}$ to be $n$.

We now explain why Definition \ref{the-descending-chain-condition-on-a-sequence-of-curves-definition} provides an analogue of the descending chain condition on partially ordered sets. Suppose that $\left(\cX_i, \phi_i\right)_{i \ge 1}$ is a sequence of smooth geometrically irreducible projective curves over a global field $k$, where the $\phi_i : \cX_{i + 1} \longrightarrow \cX_i$ are $k$-morphisms of curves such that $\left(\cX_i, \phi_i\right)_{i \ge 1}$ satisfies the DCC in the sense of Definition \ref{the-descending-chain-condition-on-a-sequence-of-curves-definition}. Let $n$ be the length of $\left(\cX_i, \phi_i\right)_{i \ge 1}$. For $i = 1$, set
\begin{align*}
a_1 := \#\cX_1(k) \in \bZ_{\ge 0} \cup \{\infty\},
\end{align*}
and for each integer $i \ge 2$, define
\begin{align}
\label{the-size-of-the-set-of-rational-points-on-X-i-equations}
a_i := \#(\phi_1 \circ \phi_2 \circ \cdots \circ \phi_{i - 1})\left(\cX_i(k)\right) \in \bZ_{\ge 0} \cup \{\infty\}.
\end{align}

The sequence $\left(\cX_i, \phi_i\right)_{i \ge 1}$ can be written in the form
\begin{align}
\label{another-form-of-the-sequence-of-curves-equation}
\cdots \longrightarrow \cX_{n + 1} \xrightarrow{\phi_{n}} \cX_{n} \xrightarrow{\phi_{n - 1}} \cX_{n - 1} \xrightarrow{\phi_{n - 2}} \cdots \longrightarrow \cX_{5} \xrightarrow{\phi_{4}} \cX_{4} \xrightarrow{\phi_{3}} \cX_3 \xrightarrow{\phi_{2}} \cX_{2} \xrightarrow{\phi_{1}} \cX_1.
\end{align}
It follows from $(\ref{the-size-of-the-set-of-rational-points-on-X-i-equations})$ and $(\ref{another-form-of-the-sequence-of-curves-equation})$ that the sequence $(a_i)_{i \ge 1}$ forms a descending chain of integers, that is, we have that
\begin{align}
\label{the-descending-chain-of-integers-a-i-equations}
\cdots \le a_{n + 2} \le a_{n + 1} \le a_n \le \cdots \le a_5 \le a_4 \le a_3 \le a_2 \le a_1.
\end{align}
By (DCC2), we know that $\cX_n(\bA_k)^{\mathrm{Br}} = \emptyset$. Since $\cX_n(k) \subseteq \cX_n(\bA_k)^{\mathrm{Br}}$, it follows that $\cX_n(k)$ is empty,
and hence $a_n = 0$. Since $(\phi_i)_{i \ge 1}$ is a sequence of $k$-morphisms of curves and $\cX_n(k) = \emptyset$, it follows from $(\ref{another-form-of-the-sequence-of-curves-equation})$ that $\cX_m(k) = \emptyset$ for each $m \ge n$, and thus $a_m = 0$ for each $m \ge n$. By (DCC1), we know that $a_1 = \#\cX_1(k) \ge 1$. Furthermore, we also know that $\#\cX_i(k) \ge 1$ for each $2 \le i \le n - 1$, and therefore
\begin{align*}
a_i := \#(\phi_1 \circ \phi_2 \circ \cdots \circ \phi_{i - 1})\left(\cX_i(k)\right) \ge 1
\end{align*}
for each $2 \le i \le n - 1$. Thus we see that the sequence $(a_i)_{i \ge 1}$ satisfies the descending chain condition in the usual sense; in other words, the descending chain $(\ref{the-descending-chain-of-integers-a-i-equations})$ eventually terminates, and satisfies the following two conditions:

\begin{itemize}

\item [(i)] $1 \le a_{n - 1} \le a_{n - 2} \le \cdots \le a_3 \le a_2 \le a_1$, and

\item [(ii)] $a_m = a_n = 0$ for each $m \ge n$.

\end{itemize}
Note that the descending sequence $(a_i)_{i \ge 1}$ not only eventually stabilizes but eventually becomes zero.

The above discussion shows that there exists a mapping from the collection of sequences of curves satisfying the DCC in the sense of Definition \ref{the-descending-chain-condition-on-a-sequence-of-curves-definition} to the collection of descending sequences of integers satisfying the DCC in the usual sense and eventually becoming zero. More explicitly, we have that
\begin{align*}
\begin{pmatrix}
\text{sequences of curves $\left(\cX_i, \phi_i\right)_{i \ge 1}$} \\
\text{satisfying the DCC } \\
\text{in the sense of Definition \ref{the-descending-chain-condition-on-a-sequence-of-curves-definition}} \\
\end{pmatrix}
\xrightarrow{\Phi}
\begin{pmatrix}
\text{sequences of integers $(a_i)_{i \ge 1}$} \\
\text{satisfying the DCC} \\
\text{in the usual sense} \\
\text{and eventually becoming zero}
\end{pmatrix}
,
\end{align*}
where $\Phi$ is the map sending a sequence $\left(\cX_i, \phi_i\right)_{i \ge 1}$ of curves to a sequence $(a_i)_{i \ge 1}$ of integers defined by
\begin{align*}
a_i :=
\begin{cases}
\#\cX_1(k) \; \; &\text{if $i = 1$,} \\
\#(\phi_1 \circ \phi_2 \circ \cdots \circ \phi_{i - 1})\left(\cX_i(k)\right) \; \; &\text{if $i \ge 2$.}
\end{cases}
\end{align*}

\begin{remark}

Let $\left(\cX_i, \phi_i\right)_{i \ge 1}$ be a sequence of smooth geometrically irreducible curves over a global field $k$ such that $\left(\cX_i, \phi_i\right)_{i \ge 1}$ satisfies the DCC and the length of $\left(\cX_i, \phi_i\right)_{i \ge 1}$ is $n$. Our main motivation to study the DCC on $\left(\cX_i, \phi_i\right)_{i \ge 1}$ is to address the situation where $\cX_n$ has no rational points over $k$ although it tries hard to possess a $k$-rational point. To be more precise, we define
\begin{align*}
\psi_i := \phi_i \circ \phi_{i + 1} \circ \cdots \circ \phi_{n - 1} : \cX_n \longrightarrow \cX_i
\end{align*}
for each $i \ge 1$. We see that there are $n - 1$ morphisms $\psi_i : \cX_n \longrightarrow \cX_i$ for $1 \le i \le n - 1$. By assumption, we know that $\#\cX_i(k) \ge 1$ for each $1 \le i \le n - 1$ and $\#\cX_n(k) = 0$. For each $1 \le i \le n - 1$, we see that although $\cX_i$ has at least one $k$-rational point, there exist no $k$-rational points in $\cX_i(k)$ whose preimage under the $k$-morphism $\psi_i$ belongs to $\cX_n(k)$. Furthermore, by (DDC2), we know that $\cX_n(\bA_k) \ne \emptyset$ and $\cX_n(k)$ is a subset of $\cX_n(\bA_k)$. Therefore, although $\cX_n$ tries many ways to obtain at least one $k$-rational point, there still exist no $k$-rational points on $\cX_n$.

\end{remark}

For each positive integer, it is natural to ask whether there exists a sequence $\left(\cX_i, \phi_i\right)_{i \ge 1}$ of curves that satisfies the DCC of length $n$. We will answer this question in the affirmative in a stronger form. More precisely, we will prove that for any positive integer $n$, there exist infinitely many sequences $\left(\cX_i, \phi_i\right)_{i \ge 1}$ of curves that satisfy the DCC of length $n$.

\begin{lemma}
\label{the-existence-of-the-set-C-n-m-such-that-kappa-in-C-n-m-generates-the-sequence-satisfying-the-DCC-lemma}

Let $p$ be a prime such that $p \equiv 1 \pmod{8}$ and $p \equiv 2 \pmod{3}$. Let $n$ and $m$ be positive integers such that $2m < n$. Then there exists an infinite set $\C_{n, m}$ of integers such that for each integer $\kappa \in \C_{n, m}$, $\cD_{\kappa}^{(m)}$ contains at least two rational points in its affine locus whereas $\cD_{\kappa}^{(n)}$ is a counterexample to the Hasse principle explained by the Brauer-Manin obstruction, where for each $\kappa \in \C_{n, m}$, $\cD_{\kappa}^{(m)}$ and $\cD_{\kappa}^{(n)}$ are the generalized Mordell curves of degree $12m$ and $12n$ defined by
\begin{align}
\label{the-curve-D-kappa-m-equation}
\cD_{\kappa}^{(m)}: pz^2 = 3^6\kappa^6x^{12m} - 1
\end{align}
and
\begin{align}
\label{the-curve-D-kappa-n-equation}
\cD_{\kappa}^{(n)}: pz^2 = 3^6\kappa^6x^{12n} - 1,
\end{align}
respectively.

\end{lemma}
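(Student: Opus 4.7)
The plan is to apply Lemma~\ref{infinitude-of-the-triples-p-n-kappa-lemma} with its parameters ``$m$'' and ``$r$'' both set equal to $2m$. Since the hypothesis $2m < n$ gives $1 \le 2m < n$, and since $2m$ trivially divides itself, this is an admissible choice. Lemma~\ref{infinitude-of-the-triples-p-n-kappa-lemma} then produces infinitely many odd primes $\kappa_{\ast}$ such that
\begin{align*}
\kappa \;:=\; 3^{2(2m) - 1}\kappa_{\ast}^{2m} \;=\; 3^{4m - 1}\kappa_{\ast}^{2m}
\end{align*}
satisfies $(B3)$--$(B5)$ and $(C1)$--$(C3)$. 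By Theorem~\ref{the-first-infinite-family-of-generalized-Mordell-curves-violates-the-Hasse-principle-theorem}, the curve $\cD_{\kappa}^{(n)}$ defined by $(\ref{the-curve-D-kappa-n-equation})$ is then a counterexample to the Hasse principle explained by the Brauer-Manin obstruction for every such $\kappa$.

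The key observation for the other half is that this particular choice makes $3^{6}\kappa^{6}$ a perfect $(12m)$-th power. Indeed,
\begin{align*}
3^{6}\kappa^{6} \;=\; 3^{6}\cdot 3^{24m - 6}\kappa_{\ast}^{12m} \;=\; 3^{24m}\kappa_{\ast}^{12m} \;=\; (9\kappa_{\ast})^{12m},
\end{align*}
so the affine equation of $\cD_{\kappa}^{(m)}$ in $(\ref{the-curve-D-kappa-m-equation})$ simplifies to $pz^{2} = (9\kappa_{\ast}x)^{12m} - 1$. Substituting $x = \pm \tfrac{1}{9\kappa_{\ast}}$ makes the right-hand side equal to $(\pm 1)^{12m} - 1 = 0$, forcing $z = 0$. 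This yields the two distinct affine rational points $\bigl(\tfrac{1}{9\kappa_{\ast}},\,0\bigr)$ and $\bigl(-\tfrac{1}{9\kappa_{\ast}},\,0\bigr)$ on $\cD_{\kappa}^{(m)}$.

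I will then define $\C_{n, m}$ to be the set of all $\kappa = 3^{4m - 1}\kappa_{\ast}^{2m}$ arising from the primes $\kappa_{\ast}$ produced by Lemma~\ref{infinitude-of-the-triples-p-n-kappa-lemma}. Since distinct primes $\kappa_{\ast}$ give distinct integers $\kappa$, the set $\C_{n, m}$ is infinite, and by the two paragraphs above each $\kappa \in \C_{n, m}$ satisfies both required conclusions. There is no genuine obstacle in this proof; the only subtlety is parameter bookkeeping, and the hypothesis $2m < n$ is exactly what permits the choice $m' = r = 2m$ in Lemma~\ref{infinitude-of-the-triples-p-n-kappa-lemma}, which in turn makes $3^{6}\kappa^{6}$ a $(12m)$-th power and hands us the rational points on $\cD_{\kappa}^{(m)}$ essentially for free.
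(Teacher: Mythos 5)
Your proof is correct and matches the paper's argument essentially line for line: both apply Lemma~\ref{infinitude-of-the-triples-p-n-kappa-lemma} with its two parameters set to $2m$, both recognize that the resulting $\kappa = 3^{4m-1}\kappa_{\ast}^{2m}$ makes $3^6\kappa^6 = (3^2\kappa_{\ast})^{12m}$ a perfect $12m$-th power, and both read off the two affine rational points $\bigl(\pm\tfrac{1}{3^2\kappa_{\ast}},0\bigr)$ on $\cD_{\kappa}^{(m)}$ while invoking Theorem~\ref{the-first-infinite-family-of-generalized-Mordell-curves-violates-the-Hasse-principle-theorem} for $\cD_{\kappa}^{(n)}$.
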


\begin{proof}

Let $\C_{n, m}$ be the set of nonzero integers $\kappa$ satisfying the following two conditions:

\begin{itemize}

\item [(i)] $\kappa = 3^{4m - 1}\kappa_{\ast}^{2m}$ for some odd prime $\kappa_{\ast} \ne 3$, and

\item [(ii)] $\kappa$ satisfies $(B3)-(B5)$ in Section \ref{a-subset-of-the-set-of-rational-points-on-X-p-section} and $(C1)-(C3)$ in Theorem \ref{the-first-infinite-family-of-generalized-Mordell-curves-violates-the-Hasse-principle-theorem}.

\end{itemize}
Using Lemma \ref{infinitude-of-the-triples-p-n-kappa-lemma} with $(m, r)$ replaced by $(2m, 2m)$, we deduce that $\C_{m, n}$ is of infinite cardinality. Let $\kappa$ be any nonzero integer in $\C_{m, n}$, and let $\cD_{\kappa}^{(m)}$ and $\cD_{\kappa}^{(n)}$ be the smooth projective models defined by $(\ref{the-curve-D-kappa-m-equation})$ and $(\ref{the-curve-D-kappa-n-equation})$, respectively. Since $\kappa \in \C_{m, n}$, we know that there exists an odd prime $\kappa_{\ast} \ne 3$ such that $\kappa = 3^{4m - 1}\kappa_{\ast}^{2m}$. The defining equation of  $\cD_{\kappa}^{(m)}$ can be written in the form
\begin{align*}
\cD_{\kappa}^{(m)}: pz^2 = \left(3^2\kappa_{\ast}x\right)^{12m} - 1,
\end{align*}
and hence we see that the points $(x, z) = \left(\pm \dfrac{1}{3^2\kappa_{\ast}}, 0 \right)$ belong to $\cD_{\kappa}^{(m)}(\bQ)$. Thus $\cD_{\kappa}^{(m)}$ has at least two $\bQ$-rational points.

We now prove that $\cD_{\kappa}^{(n)}$ is a counterexample to the Hasse principle explained by the Brauer-Manin obstruction. Indeed, we know that $\kappa$ satisfies $(B3)-(B5)$ and $(C1)-(C3)$. By Theorem \ref{the-first-infinite-family-of-generalized-Mordell-curves-violates-the-Hasse-principle-theorem}, we deduce that $\cD_{\kappa}^{(n)}$ is a counterexample to the Hasse principle explained by the Brauer-Manin obstruction, and thus our contention follows.

\end{proof}

\begin{remark}
\label{Remark-the-degree-12n-is-optimal-for-generalized-Mordell-curves-D}

Let $p$ be a prime such that $p \equiv 1 \pmod{8}$ and $p \equiv 2 \pmod{3}$. Take a positive integer $n \ge 3$, and let $\kappa$ be a nonzero integer. For any positive integer $s$ dividing $n$ with $s \ge 2$, define $m_s := \dfrac{n}{s}$. Let $\cD_{\kappa}^{(n)}$ and $\cD_{\kappa}^{(m_s)}$ be the generalized Mordell curves of degree $12n$ and $12m_s$ defined by
\begin{align}
\label{Definition-D-kappa-n-in-Remark-about-the-optimal-degree-for-D}
\cD_{\kappa}^{(n)}: pz^2 = 3^6\kappa^6x^{12n} - 1
\end{align}
and
\begin{align}
\label{Definition-D-kappa-m-s-in-Remark-about-the-optimal-degree-for-D}
\cD_{\kappa}^{(m_s)}: pz^2 = 3^6\kappa^6x^{12m_s} - 1,
\end{align}
respectively. Then there is a $\bQ$-morphism $\phi_{m_s} : \cD_{\kappa}^{(n)} \rightarrow \cD_{\kappa}^{(m_s)}$ defined by
\begin{align*}
\phi_{m_s} : \cD_{\kappa}^{(n)} &\rightarrow \cD_{\kappa}^{(m_s)} \\
         (x, z) &\mapsto (x^s, z).
\end{align*}
If $\kappa$ satisfies $(B3)-(B5)$ and $(C1)-(C3)$ in Theorem \ref{the-first-infinite-family-of-generalized-Mordell-curves-violates-the-Hasse-principle-theorem}, then we know that $\cD_{\kappa}^{(n)}$ is a counterexample to the Hasse principle explained by the Brauer-Manin obstruction. Under the morphisms $\phi_{m_s}$, it is natural to ask whether one can imply nonexistence of rational points on $\cD_{\kappa}^{(n)}$ by first showing that there exists a positive integer $s \ge 2$ such that $s$ divides $n$ and the curve $\cD_{\kappa}^{(m_s)}$ has no rational points, where $m_s = \dfrac{n}{s}$. Lemma \ref{the-existence-of-the-set-C-n-m-such-that-kappa-in-C-n-m-generates-the-sequence-satisfying-the-DCC-lemma} shows that it is impossible, and thus it proves that the degree $12n$ of the generalized Mordell curves $\cD$ in Theorem \ref{the-first-infinite-family-of-generalized-Mordell-curves-violates-the-Hasse-principle-theorem} is \textit{optimal} in the sense that one can not replace $n$ by a positive divisor $m_s$ of $n$ with $m_s \ne n$. Indeed, assume further that $n$ is odd, and take any positive integer $s \ge 2$ such that $s$ divides $n$. Define $m_s := \dfrac{n}{s}$. Since $s \ge 2$, we deduce that $2m_s \le sm_s = n$. Since $n$ is odd, we deduce that $2m_s < n$. Hence Lemma \ref{the-existence-of-the-set-C-n-m-such-that-kappa-in-C-n-m-generates-the-sequence-satisfying-the-DCC-lemma} shows that there exists an infinite set $\C_{n, m_s}$ of integers such that for each $\kappa \in \C_{n, m_s}$, $\cD^{(m_s)}_{\kappa}$ contains at least two rational points in its affine locus whereas $\cD^{(n)}_{\kappa}$ is a counterexample to the Hasse principle explained by the Brauer-Manin obstruction, where $\cD^{(m_s)}_{\kappa}$ and $\cD^{(n)}_{\kappa}$ are defined by $(\ref{Definition-D-kappa-m-s-in-Remark-about-the-optimal-degree-for-D})$ and $(\ref{Definition-D-kappa-n-in-Remark-about-the-optimal-degree-for-D})$, respectively.

\end{remark}

We now prove the main result in this section, which says that for a given positive integer $h$, there are infinitely many sequences of curves satisfying the DCC of length $h$.

\begin{corollary}
\label{the-existence-of-infinitude-of-sequences-of-curves-satisfying-DCC-corollary}

Let $h$ be a positive integer. Then there are infinitely many sequences $\left(\cX_i, \phi_i\right)_{i \ge 1}$ of generalized Mordell curves that satisfy the DCC of length $h$ in the sense of Definition \ref{the-descending-chain-condition-on-a-sequence-of-curves-definition}, where for each $i \ge 1$, $\cX_i$ is a generalized Mordell curve, and for each $i \ge 1$,
\begin{align*}
\phi_i : \cX_{i + 1} \rightarrow \cX_i
\end{align*}
is a $k$-morphism of curves.

\end{corollary}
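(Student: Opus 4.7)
The plan is to exhibit, for each $h \ge 1$, an infinite family of sequences of the form $\cX_i := \cD_\kappa^{(n_i)}$, where $\cD_\kappa^{(n)}$ denotes the smooth projective model of $pz^2 = 3^6\kappa^6 x^{12n} - 1$ for a fixed prime $p$ with $p \equiv 1 \pmod{8}$ and $p \equiv 2 \pmod{3}$, and the integers $n_1 < n_2 < \cdots$ are chosen once and for all while $\kappa$ ranges over an infinite set of parameters. The transition morphisms $\phi_i : \cX_{i+1} \to \cX_i$ will be the $\bQ$-morphisms extending $(x,z) \mapsto (x^{n_{i+1}/n_i}, z)$, which make sense provided $n_i \mid n_{i+1}$; since the genus of $\cD_\kappa^{(n)}$ is $6n - 1$, strict monotonicity of $(n_i)$ already guarantees condition (DCC3).

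For $h \ge 2$ I would fix the degree sequence $n_i = 2^i$ for $1 \le i \le h-1$, $n_h = 3 \cdot 2^{h-1}$, and $n_i = 3 \cdot 2^{i-1}$ for $i \ge h$. By construction $n_i \mid n_{i+1}$ throughout, the $n_i$ are strictly increasing with $n_1 \ge 2$, and $2 n_{h-1} = 2^h < 3 \cdot 2^{h-1} = n_h$. Now apply Lemma \ref{the-existence-of-the-set-C-n-m-such-that-kappa-in-C-n-m-generates-the-sequence-satisfying-the-DCC-lemma} with $(n, m) = (n_h, n_{h-1})$ to obtain the infinite set $\C_{n_h, n_{h-1}}$ of integers $\kappa$ such that $\cD_\kappa^{(n_h)}$ is a Brauer-Manin counterexample (giving (DCC2)) and $\cD_\kappa^{(n_{h-1})}(\bQ)$ contains an affine rational point $(x_0, z_0)$. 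For each $1 \le i \le h-1$, the image $(x_0^{n_{h-1}/n_i}, z_0)$ under the composition $\phi_i \circ \phi_{i+1} \circ \cdots \circ \phi_{h-2}$ is then a $\bQ$-point of $\cX_i$, which yields (DCC1). The case $h = 1$ is handled by the same recipe with the lemma replaced by a direct appeal to Corollary \ref{infinitude-of-non-isomorphic-generalized-Mordell-curves-violating-the-Hasse-principle-corollary}, taking for instance $n_i = 2^i$.

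Finally, I must check that distinct $\kappa \in \C_{n_h, n_{h-1}}$ yield non-isomorphic sequences, which furnishes the infinitely many claim. Tracing through the proof of Lemma \ref{the-existence-of-the-set-C-n-m-such-that-kappa-in-C-n-m-generates-the-sequence-satisfying-the-DCC-lemma} (which invokes Lemma \ref{infinitude-of-the-triples-p-n-kappa-lemma} with $(m, r) = (2 n_{h-1}, 2 n_{h-1})$), the elements of $\C_{n_h, n_{h-1}}$ have the explicit shape $\kappa = 3^{4 n_{h-1} - 1} \kappa_\ast^{2 n_{h-1}}$ for varying odd primes $\kappa_\ast \ne 3$. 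The very same $\kappa_\ast$-adic valuation argument used in the proof of Corollary \ref{infinitude-of-non-isomorphic-generalized-Mordell-curves-violating-the-Hasse-principle-corollary} then shows that distinct choices of $\kappa_\ast$ produce non-isomorphic curves $\cX_h = \cD_\kappa^{(n_h)}$, and hence non-isomorphic sequences. The main obstacle is purely combinatorial, namely arranging $(n_i)$ so as to meet the divisibility $n_i \mid n_{i+1}$, strict growth, and the gap $2 n_{h-1} < n_h$ simultaneously; all of the arithmetic content has already been packaged into Lemma \ref{the-existence-of-the-set-C-n-m-such-that-kappa-in-C-n-m-generates-the-sequence-satisfying-the-DCC-lemma}.
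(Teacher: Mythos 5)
Your proof is correct and follows essentially the same approach as the paper: apply Lemma \ref{the-existence-of-the-set-C-n-m-such-that-kappa-in-C-n-m-generates-the-sequence-satisfying-the-DCC-lemma} at indices $(h-1, h)$ to get an infinite family of $\kappa$, and sandwich the resulting pair of curves $\cD_\kappa^{(n_{h-1})}, \cD_\kappa^{(n_h)}$ into a tower of power maps whose degrees increase and divide one another (the paper takes $n_i = n_0^{i+1}n_1$ with a jump of $n_0^\epsilon$ at $i = h-1$, you take $n_i = 2^i$ with a jump of $3$; both satisfy the needed $2n_{h-1} < n_h$). One small point in your favor: you explicitly justify that distinct $\kappa \in \C_{n_h, n_{h-1}}$ yield non-isomorphic sequences via the $\kappa_*$-adic valuation argument, and you separate out the $h=1$ case, two details the paper passes over without comment.
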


\begin{proof}

Let $p$ be a prime such that $p \equiv 1 \pmod{8}$ and $p \equiv 2 \pmod{3}$. Let $n_0$ and $n_1$ be integers such that $n_0 \ge 2$ and $n_1 \ge 1$. Set
\begin{align*}
n &:= n_0^{h + \epsilon}n_1, \\
m &:= n_0^{h}n_1,
\end{align*}
where $\epsilon$ is a positive integer such that $n_0^{\epsilon} > 2$. This means that if $n_0 = 2$, then $\epsilon$ is at least two, and that if $n_0 > 2$, then $\epsilon \ge 1$. We see that
\begin{align*}
2m < mn_0^{\epsilon} = n.
\end{align*}
Applying Lemma \ref{the-existence-of-the-set-C-n-m-such-that-kappa-in-C-n-m-generates-the-sequence-satisfying-the-DCC-lemma} for the triple $(p, n, m)$, we deduce that there exists an infinite set $\C_{n, m}$ of nonzero integers $\kappa$ satisfying conditions $(i)$ and $(ii)$ as in the proof of Lemma \ref{the-existence-of-the-set-C-n-m-such-that-kappa-in-C-n-m-generates-the-sequence-satisfying-the-DCC-lemma} such that for any $\kappa \in \C_{n, m}$, $\cX_{h - 1}^{(\kappa)}$ contains at least two rational points in its affine locus whereas $\cX_h^{(\kappa)}$ is a counterexample to the Hasse principle explained by the Brauer-Manin obstruction, where for each $\kappa \in \C_{n, m}$, $\cX_{h - 1}^{(\kappa)}$ and $\cX_{h}^{(\kappa)}$ are the generalized Mordell curves defined by
\begin{align*}
\cX_{h - 1}^{(\kappa)} : pz^2 = 3^6\kappa^6x^{12n_0^{h}n_1} - 1
\end{align*}
and
\begin{align*}
\cX_{h}^{(\kappa)} : pz^2 = 3^6\kappa^6x^{12n_0^{h + \epsilon}n_1} - 1,
\end{align*}
respectively.

Now take an integer $\kappa \in \C_{n, m}$, and let $\cX_{h - 1}^{(\kappa)}$ and $\cX_h^{(\kappa)}$ be the smooth projective models as above. Following the proof of Lemma \ref{the-existence-of-the-set-C-n-m-such-that-kappa-in-C-n-m-generates-the-sequence-satisfying-the-DCC-lemma} and by the definition of $\C_{n, m}$, we know that $\kappa = 3^{4m - 1}\kappa_{\ast}^{2m}$ for some odd prime $\kappa_{\ast} \ne 3$. We define
\begin{align*}
\psi_{h - 1}^{(\kappa)} : \cX_{h}^{(\kappa)} &\rightarrow \cX_{h - 1}^{(\kappa)} \\
                        (x,z)   &\mapsto (x^{n_0^{\epsilon}},z).
\end{align*}
For each integer $i \ge h + 1$, let $\cX_{i}^{(\kappa)}$ be the smooth projective model of the affine curve defined by
\begin{align*}
\cX_{i}^{(\kappa)} : pz^2 = 3^6\kappa^6x^{12n_0^{i + \epsilon}n_1} - 1,
\end{align*}
and for each integer $i \ge h$, let $\psi_{i}: \cX_{i + 1}^{(\kappa)} \longrightarrow \cX_{i}^{(\kappa)}$ be the $\bQ$-morphism of curves defined by
\begin{align*}
\psi_{i}^{(\kappa)} : \cX_{i + 1}^{(\kappa)} &\rightarrow \cX_{i}^{(\kappa)} \\
                        (x,z)   &\mapsto (x^{n_0},z).
\end{align*}
For each integer $1 \le i \le h - 2$, let $\cX_{i}^{(\kappa)}$ be the smooth projective model of the affine curve defined by
\begin{align*}
\cX_{i}^{(\kappa)} : pz^2 = 3^6\kappa^6x^{12n_0^{i + 1}n_1} - 1,
\end{align*}
and for each integer $1 \le i \le h - 2$, let $\psi_{i}: \cX_{i + 1}^{(\kappa)} \longrightarrow \cX_{i}^{(\kappa)}$ be the $\bQ$-morphism of curves defined by
\begin{align*}
\psi_{i}^{(\kappa)} : \cX_{i + 1}^{(\kappa)} &\rightarrow \cX_{i}^{(\kappa)} \\
                        (x,z)   &\mapsto (x^{n_0},z).
\end{align*}
Hence we have defined a sequence of curves $\left(\cX_{i}^{(\kappa)}, \psi_i^{(\kappa)}\right)_{i \ge 1}$. We contend that $\left(\cX_{i}^{(\kappa)}, \psi_i^{(\kappa)}\right)_{i \ge 1}$ satisfies the DCC of length $h$ in the sense of Definition \ref{the-descending-chain-condition-on-a-sequence-of-curves-definition}. Indeed, we have shown above that $\cX_h^{(\kappa)}$ is a counterexample to the Hasse principle explained by the Brauer-Manin obstruction, and hence $\left(\cX_{i}^{(\kappa)}, \psi_i^{(\kappa)}\right)_{i \ge 1}$ satisfies (DCC2) in Definition \ref{the-descending-chain-condition-on-a-sequence-of-curves-definition}. We also know that $\cX_{h - 1}^{(\kappa)}$ contains at least two rational points in its affine locus. Let $(x_0, z_0)$ be any rational point in $\cX_{h - 1}^{(\kappa)}(\bQ)$. Then one can check that for each integer $1 \le i \le h - 2$, the point $(x, z) := (x_0^{n_0^{h - i - 1}}, z_0)$ belongs to $\cX_{i}^{(\kappa)}(\bQ)$. Hence (DCC1) is true. It remains to prove that $\left(\cX_{i}^{(\kappa)}, \psi_i^{(\kappa)}\right)_{i \ge 1}$ satisfies (DCC3). For each $i \ge 1$, we denote by $g_i^{(\kappa)}$ the genus of the curve $\cX_i^{(\kappa)}$. We see that
\begin{align*}
g_i^{(\kappa)} =
\begin{cases}
6n_0^{i + \epsilon}n_1 - 1 \; \; &\text{if $i \ge h$,} \\
6n_0^{i + 1}n_1 - 1 \; \; &\text{if $1 \le i \le h - 1$.}
\end{cases}
\end{align*}
Hence it follows that $g_i^{(\kappa)} < g_j^{(\kappa)}$ for any positive integers $i, j$ with $1 \le i < j$, and thus (DCC3) is true. Therefore $\left(\cX_{i}^{(\kappa)}, \psi_i^{(\kappa)}\right)_{i \ge 1}$ satisfies the DCC of length $h$ in the sense of Definition \ref{the-descending-chain-condition-on-a-sequence-of-curves-definition}. Since $\C_{n, m}$ is of infinite cardinality, our contention follows immediately.

\end{proof}

The above corollary shows that for a given positive integer $h \ge 1$, there are infinitely many sequences of smooth geometrically irreducible curves over $\bQ$ such that they satisfy the DCC of length $h$. It is natural to ask whether or not there exists a sequence of smooth geometrically irreducible curves such that it does not satisfy the DCC. The following result shows that there exist infinitely many such sequences of curves over $\bQ$.

\begin{proposition}
\label{the-existence-of-infinitude-of-sequences-of-curves-not-satisfying-the-DCC-proposition}

There exist infinitely many sequences of smooth geometrically irreducible curves over $\bQ$ that do not satisfy the DCC in the sense of Definition \ref{the-descending-chain-condition-on-a-sequence-of-curves-definition}.

\end{proposition}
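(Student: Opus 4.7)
The plan is to produce infinitely many sequences that violate the DCC by arranging for one of the three defining conditions to fail uniformly in the witness $n$. The cleanest route is to violate (DCC3), which is a universally quantified strict-growth condition on the genera $g_i$ and does not depend on $n$: if the $g_i$ are not strictly increasing, then the DCC cannot hold for any length.

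First I would present a constant-sequence construction. Fix any smooth geometrically irreducible projective curve $\cX$ over $\bQ$, for example an elliptic curve admitting a rational point, and set $\cX_i := \cX$ and $\phi_i := \mathrm{id}_{\cX}$ for every $i \ge 1$. Every term of the sequence has the same genus, so (DCC3) fails, and hence the DCC cannot hold for any $n$. To produce infinitely many distinct such sequences, I would let $\cX$ range over an infinite collection of pairwise non-isomorphic curves over $\bQ$ with rational points, such as the elliptic curves $E_k : y^2 = x^3 + k$ for $k$ varying in an infinite subset of $\bZ$ with pairwise distinct $j$-invariants; each such $E_k$ carries the rational point at infinity. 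The resulting constant sequences are pairwise distinct since they consist of non-isomorphic curves, which yields the required infinite family.

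As a secondary construction illustrating that (DCC3) is not the only obstruction one can exploit, I would sketch a non-constant example. Choose any strictly increasing sequence of positive integers $(g_i)_{i \ge 1}$, and for each $i$ take $\cX_i$ to be a smooth geometrically irreducible projective curve over $\bQ$ of genus $g_i$ with $\cX_i(\bQ) \ne \emptyset$; the smooth projective model of $y^2 = x^{2g_i + 1} + 1$, which has the rational point $(0,1)$, serves the purpose. Let the $\phi_i$ be any $\bQ$-morphisms, for instance constant morphisms to a chosen rational point of $\cX_i$. Because $\cX_n(\bQ) \subseteq \cX_n(\bA_{\bQ})^{\mathrm{Br}}$ is nonempty for every $n$, condition (DCC2) fails for every $n$, and infinitely many distinct sequences of this shape arise by varying the underlying curves (for example, replacing $+1$ with $+k$ for infinitely many nonzero integers $k$). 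I do not anticipate any real obstacle: the statement is soft, and each verification is a one-line matter.
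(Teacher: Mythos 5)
Your argument is correct. Your secondary construction is essentially the paper's: the paper takes $\cX_i$ to be the smooth projective model of $z^2 = F(x^{m^i})$ for a separable polynomial $F$ of degree $n \ge 2$ with $F(0) \ne 0$ and $F(1) = 0$ and connects them by the power maps $\psi_i(x,z) = (x^m,z)$; every $\cX_i$ carries the rational point $(1,0)$, the genera strictly increase, and hence (DCC2) can never hold --- the same observation you make. Your primary construction (constant sequences with identity morphisms) is a different, more elementary route that violates (DCC3) outright. It does prove the proposition as literally stated, but observe that the author introduced (DCC3) precisely ``to rule out certain trivial cases'', and a constant sequence is exactly the degeneracy that clause is designed to exclude, so the paper's construction --- an honest tower of curves of strictly increasing genus, connected by finite nonconstant covers, every layer of which has a rational point --- is the more informative witness; your choice of constant morphisms in the secondary construction is likewise permitted by the letter of Definition \ref{the-descending-chain-condition-on-a-sequence-of-curves-definition} but is similarly degenerate. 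One small factual slip: the curves $E_k : y^2 = x^3 + k$ all have $j$-invariant $0$, not pairwise distinct $j$-invariants; they are sextic twists of one another, and two such curves are $\bQ$-isomorphic if and only if $k_1/k_2 \in (\bQ^{\times})^6$, so restricting $k$ to, say, distinct primes still produces infinitely many pairwise non-isomorphic curves over $\bQ$, but the justification you give is incorrect.
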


\begin{proof}

Let $n$ and $m$ be positive integers such that $n \ge 2$ and $m \ge 2$. Let $F(x)$ be a separable polynomial of degree $n$ in $\bQ[x]$ such that $F(0) \ne 0$ and $F(1) = 0$. Note that there are infinitely many such polynomials $F(x)$; for example, one can take $F(x) = x^n - 1$. Since $F(0)$ is nonzero and $F(x)$ is separable, we see that $F(x^{m^i})$ is separable for each positive integer $i \ge 1$. For each positive integer $i \ge 1$, let $\cX_i$ be the smooth projective model of the affine curve defined by
\begin{align*}
\cX_i : z^2 = F(x^{m^i}).
\end{align*}
For each $i \ge 1$, we denote by $g_i$ the genus of $\cX_i$. For each $i \ge 1$, we see that
\begin{align*}
g_i =
\begin{cases}
\dfrac{nm^i - 2}{2} \; \; &\text{if $nm \equiv 0 \pmod{2}$,} \\
\dfrac{nm^i - 1}{2} \; \; &\text{if $nm \equiv 1 \pmod{2}$.} \\
\end{cases}
\end{align*}
Hence we deduce that $g_i < g_j$ for any positive integers $i, j$ with $1 \le i < j$. Since $F(1) = 0$, we see that the point $(x, z) = (1, 0)$ belongs to $\cX_i(\bQ)$ for each $i \ge 1$. Furthermore, for each $i \ge 1$, we define
\begin{align*}
\psi_i : \cX_{i + 1} &\longrightarrow \cX_i \\
           (x, z)    &\mapsto (x^{m}, z).
\end{align*}
Hence we have defined a sequence $\left(\cX_{i}, \psi_i \right)_{i \ge 1}$ of smooth geometrically irreducible curves over $\bQ$ such that $\cX_i(\bQ) \ne 0$ for each $i \ge 1$ and $g_i < g_j$ for any positive integers $i, j$ with $1 \le i < j$. Thus $\left(\cX_{i}, \psi_i \right)_{i \ge 1}$ does not satisfy the DCC, and therefore our contention follows.

\end{proof}

\section{Certain generalized Fermat curves violating the Hasse principle}
\label{certain-generalized-Fermat-curves-violate-the-Hasse-principle-section}

In this section, we will give a sufficient condition under which certain generalized Fermat curves of signature $(12n, 12n, 12n)$ with $n \ge 2$ are counterexamples to the Hasse principle explained by the Brauer-Manin obstruction. In the next section, using this sufficient condition, we will show that for each positive integer $n \ge 2$, there exist infinitely many generalized Fermat curves of signature $(12n, 12n, 12n)$ that are counterexamples to the Hasse principle explained by the Brauer-Manin obstruction. We begin by recalling the following useful result.

\begin{lemma}
\label{functoriality-azumaya-lemma}
(see \cite[Lemma $4.8$]{coray-manoil})

Let $k$ be a number field and let $\cV_1$ and $\cV_2$ be (proper) $k$-varieties. Assume that there is a $k$-morphism $\Psi : \cV_1 \rightarrow \cV_2$ and $\cV_2(\bA_k)^{\mathrm{Br}} = \emptyset$. Then $\cV_1(\bA_k)^{\mathrm{Br}} = \emptyset$.

\end{lemma}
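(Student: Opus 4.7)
The plan is to deduce the result from two standard pieces of functoriality, and then conclude by contraposition. First, a $k$-morphism $\Psi : \cV_1 \to \cV_2$ induces, by pullback of Azumaya algebras (or equivalently of étale cohomology classes), a homomorphism $\Psi^{\ast} : \Br(\cV_2) \to \Br(\cV_1)$. Second, applying $\Psi$ componentwise gives a map $\Psi_{\ast} : \cV_1(\bA_k) \to \cV_2(\bA_k)$. My goal is to show that $\Psi_{\ast}$ sends $\cV_1(\bA_k)^{\mathrm{Br}}$ into $\cV_2(\bA_k)^{\mathrm{Br}}$, after which the hypothesis that the target is empty immediately forces the source to be empty.

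The core step is the following compatibility. For each place $v$ of $k$, each $\alpha \in \Br(\cV_2)$, and each local point $P_v \in \cV_1(k_v)$, I would show
\begin{equation*}
(\Psi^{\ast}\alpha)(P_v) \;=\; \alpha\bigl(\Psi(P_v)\bigr) \quad \text{in } \Br(k_v),
\end{equation*}
which is just functoriality of pullback: both sides equal $(\Psi \circ P_v)^{\ast}\alpha$, where $P_v$ is regarded as a $k_v$-point $\mathrm{Spec}(k_v) \to \cV_1$. Applying $\mathrm{inv}_v$ and summing over all $v$ therefore gives
\begin{equation*}
\sum_{v} \mathrm{inv}_v\bigl((\Psi^{\ast}\alpha)(P_v)\bigr) \;=\; \sum_{v} \mathrm{inv}_v\bigl(\alpha(\Psi(P_v))\bigr).
\end{equation*}

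Now suppose, for contradiction, that $(P_v)_v \in \cV_1(\bA_k)^{\mathrm{Br}}$. For any $\alpha \in \Br(\cV_2)$, the class $\Psi^{\ast}\alpha$ lies in $\Br(\cV_1)$, so the left-hand side above vanishes by the definition of $\cV_1(\bA_k)^{\mathrm{Br}}$ as the right kernel of the adelic pairing $\cE$ in \eqref{Pairing-the-adelic-Brauer-Manin-pairing}. The displayed identity then forces the right-hand side to vanish as well, and since $\alpha \in \Br(\cV_2)$ was arbitrary, the adelic point $(\Psi(P_v))_v$ lies in $\cV_2(\bA_k)^{\mathrm{Br}}$. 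This contradicts the hypothesis $\cV_2(\bA_k)^{\mathrm{Br}} = \emptyset$, so $\cV_1(\bA_k)^{\mathrm{Br}}$ must itself be empty.

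The only nontrivial ingredient is the evaluation-compatibility identity $(\Psi^{\ast}\alpha)(P_v) = \alpha(\Psi(P_v))$; everything else is a direct unwinding of definitions. I do not expect genuine obstacles, since the identity is essentially the statement that the pullback along a composition is the composition of pullbacks, valid both for Azumaya algebras and for the étale Brauer group $H^2_{\mathrm{\acute{e}t}}(-,\mathbb{G}_m)$. A brief sentence citing this functoriality (as in \cite{skorobogatov}) should suffice in the written proof.
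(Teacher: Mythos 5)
The paper does not prove this lemma; it simply cites it as Lemma~4.8 of Coray--Manoil. Your proof is the standard functoriality argument for the Brauer--Manin set (pullback $\Psi^{\ast}$ on Brauer groups, pushforward $\Psi_{\ast}$ on adelic points, compatibility of evaluation via $(\Psi^{\ast}\alpha)(P_v) = \alpha(\Psi(P_v))$, then sum local invariants), and it is correct; it is also essentially the argument given in Coray--Manoil, so there is no genuine divergence of approach to report.
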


The following theorem is our main result in this section.

\begin{theorem}
\label{the-first-family-of-certain-generalized-Fermat-curves-violating-the-Hasse-principle-theorem}

Let $p$ be a prime such that $p \equiv 1 \pmod{8}$ and $p \equiv 2 \pmod{3}$. Let $n$ be an integer such that $n \ge 2$, and let $\chi$ be a nonzero odd integer. Let $\kappa$ be a nonzero integer satisfying $(B3)$, $(B4)$ and $(B5)$ in Section \ref{a-subset-of-the-set-of-rational-points-on-X-p-section}. Assume further that the following are true:

\begin{itemize}

\item [(D1)] $\kappa \equiv \dfrac{1}{3} \pmod{p^{2v_p(n) + 1}}$.

\item [(D2)] $p\chi^2 \equiv 3^6\kappa^6 \pmod{2^{2v_2(n) + 5}}$.

\item [(D3)] $p\chi^2 \equiv -1 \pmod{3^{2v_3(n) + 3}}$.

\item [(D4)] let $\D$ be the set of odd primes $l$ satisfying the following two conditions:

\begin{itemize}

\item [(i)] $\gcd(l, 3) = \gcd(l, p) = 1$, and

\item [(ii)] $l$ divides $\kappa$.

\end{itemize}
For each prime $l \in \D$, we assume that $l \equiv 1 \pmod{4}$ and $p\chi^2 \equiv -1 \pmod{l^{2v_l(n) + 1}}$.

\item [(D5)] let $\E$ be the set of odd primes $l$ satisfying the following two conditions:

\begin{itemize}

\item [(i)] $\gcd(l, 3) = \gcd(l, p) = \gcd(l, \kappa) = 1$, and

\item [(ii)] $l$ divides $\chi$.

\end{itemize}
For each prime $l \in \E$, we assume that there exists an integer $\zeta_l$ with $\zeta_l \not\equiv 0 \pmod{l}$ such that $\kappa \equiv \dfrac{\zeta_l^{2n}}{3} \pmod{l^{2v_l(n) + 1}}$ or $\kappa \equiv -\dfrac{\zeta_l^{2n}}{3} \pmod{l^{2v_l(n) + 1}}$.

\item [(D6)] let $\F$ be the set of odd primes $l$ satisfying the following two conditions:

\begin{itemize}

\item [(i)] $\gcd(l, 3) = \gcd(l, p) = \gcd(l, \kappa) = \gcd(l, \chi) = 1$, and

\item [(ii)] $l$ divides $n$.

\end{itemize}
For each prime $l \in \F$, we assume that $\kappa \equiv \dfrac{1}{3} \pmod{l^{2v_l(n) + 1}}$.

\item [(D7)] let $\G$ be the set of odd primes $l$ satisfying the following two conditions:

\begin{itemize}

\item [(i)] $\gcd(l, 3) = \gcd(l, p) = \gcd(l, \kappa) = \gcd(l, \chi) = \gcd(l, n) = 1$, and

\item [(ii)] $l \le 4(6n - 1)^2(12n - 1)^2$.

\end{itemize}
For each prime $l \in \G$, we assume that $\kappa \equiv \dfrac{1}{3} \pmod{l}$.

\end{itemize}
Let $\cF$ be the generalized Fermat curve of signature $(12n, 12n, 12n)$ defined by
\begin{align}
\label{the-generalized-Fermat-curve-F-equation}
\cF : 3^6\kappa^6x^{12n} - y^{12n} - p\chi^2z^{12n} = 0.
\end{align}
Then $\cF$ is a counterexample to the Hasse principle explained by the Brauer-Manin obstruction.

\end{theorem}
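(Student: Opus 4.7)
The plan is to establish the two required properties separately: (i) $\cF(\bA_{\bQ})^{\mathrm{Br}} = \emptyset$, which I will deduce by producing an explicit $\bQ$-morphism from $\cF$ to the generalized Mordell curve $\cD$ of Corollary \ref{the-generalized-Mordell-curve-D-have-no-rational-points-corollary} and invoking the functoriality statement of Lemma \ref{functoriality-azumaya-lemma}; and (ii) $\cF(\bQ_l) \ne \emptyset$ for every place $l$, which I will verify place-by-place using the congruences $(D1)$--$(D7)$, Hensel's lemma, and the Hasse--Weil bound (Lemma \ref{Hasse-Weil-bound-lemma}).

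For step (i), dividing the defining equation $(\ref{the-generalized-Fermat-curve-F-equation})$ by $y^{12n}$ yields the identity
\[
3^6\kappa^6 \left(\tfrac{x}{y}\right)^{12n} - 1 \;=\; p\left(\chi\tfrac{z^{6n}}{y^{6n}}\right)^{2},
\]
which shows that the rule $(x:y:z) \mapsto \bigl(\tfrac{x}{y},\; \chi\tfrac{z^{6n}}{y^{6n}}\bigr)$ defines a rational map $\Psi : \cF \dashrightarrow \cD$ into the smooth projective model of $p w^{2} = 3^{6}\kappa^{6}X^{12n} - 1$. Since the three coefficients $3^6\kappa^6$, $-1$, $-p\chi^2$ are all nonzero, $\cF$ is a smooth projective curve; hence $\Psi$ extends uniquely to a $\bQ$-morphism $\cF \to \cD$. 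Corollary \ref{the-generalized-Mordell-curve-D-have-no-rational-points-corollary} (whose hypotheses $(B3), (B4), (B5)$ are part of the assumptions of the theorem) gives $\cD(\bA_{\bQ})^{\mathrm{Br}} = \emptyset$, and Lemma \ref{functoriality-azumaya-lemma} then yields $\cF(\bA_{\bQ})^{\mathrm{Br}} = \emptyset$, as desired.

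For step (ii), set $F(x,y,z) := 3^{6}\kappa^{6}x^{12n} - y^{12n} - p\chi^{2}z^{12n}$. Real solvability is immediate from the equation with $y = 0$. At $l = p$, $(D1)$ gives $F(1,1,0) \equiv (3\kappa)^{6} - 1 \equiv 0 \pmod{p^{2v_p(n)+1}}$ while $v_p\bigl(\partial F / \partial x(1,1,0)\bigr) = v_p(n)$, so Hensel's lemma lifts. At $l = 2$, $(D2)$ gives $F(1,0,1) \equiv 3^{6}\kappa^{6} - p\chi^{2} \equiv 0 \pmod{2^{2v_2(n)+5}}$, which lifts via $\partial F / \partial x$ of $2$-adic valuation $v_2(n) + 2$. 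At $l = 3$, $(D3)$ makes $(0,1,1)$ a liftable solution via $\partial F / \partial y$. For each prime $l \in \D$, $(D4)$ similarly allows $(0:1:1)$ to lift (the side condition $l \equiv 1 \pmod{4}$ ensures that $-p\chi^{2}$ is a priori a square modulo $l$, a necessary condition for the congruence to be solvable). For each $l \in \E$, $(D5)$ gives $(3\kappa)^{6} \equiv \zeta_l^{12n} \pmod{l^{2v_l(n)+1}}$, so $(1:\zeta_l:0)$ is a liftable solution. For $l \in \F$ and $l \in \G$, the congruences $\kappa \equiv \tfrac{1}{3}$ of $(D6)$ and $(D7)$ make $(1:1:0)$ a non-singular local solution. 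Finally, for every remaining odd prime $l$, necessarily $l > 4(6n-1)^{2}(12n-1)^{2}$ and $\cF$ is smooth over $\bZ_l$, so Lemma \ref{Hasse-Weil-bound-lemma} applied to $\cF$ (of genus $(12n-1)(12n-2)/2$) produces an $\bF_l$-point, which Hensel's lemma lifts to a $\bQ_l$-point.

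The principal technical difficulty is the bookkeeping of partial-derivative valuations: at each place one must check that the strong form of Hensel's lemma applies, i.e., that the valuation of $F$ at the chosen candidate strictly exceeds twice the valuation of the selected non-vanishing partial, and the exponents appearing in $(D1)$--$(D7)$ have been calibrated exactly to meet this threshold. The $l = 2$ case will be the tightest, since the exponent $2v_2(n) + 5$ barely exceeds $2(v_2(n) + 2)$, so no slack is available to weaken $(D2)$; the analogous tightness at primes $l \in \D \cup \E \cup \F$ will require a careful verification that the selected partial retains the correct valuation at the candidate point.
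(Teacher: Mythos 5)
Your proposal is correct and follows essentially the same route as the paper: the same morphism $\Psi : \cF \to \cD$, $(x:y:z) \mapsto (x:y:\chi z^{6n})$, combined with Corollary \ref{the-generalized-Mordell-curve-D-have-no-rational-points-corollary} and Lemma \ref{functoriality-azumaya-lemma} for the Brauer--Manin part, and the same candidate local points $(1,1,0)$, $(1,0,1)$, $(0,1,1)$, $(1,\zeta_l,0)$ with Hensel's lemma plus the Hasse--Weil bound (noting that $(12n-1)(12n-2)/2 = (6n-1)(12n-1)$) for local solvability. The only inessential deviation is at $l = 2$, where you differentiate with respect to $x$ rather than $z$ as in the paper; this works because $(D2)$ together with $\chi$ odd forces $\kappa$ to be odd, but the paper's choice of $\partial F/\partial z$ (with valuation $v_2(12np\chi^2) = v_2(n)+2$) avoids needing that observation.
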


\begin{remark}
\label{the-genus-of-generalized-Fermat-curves-of-signature-12n-12n-12n-remark}

Let $\cF$ be the generalized Fermat curve of signature $(12n, 12n, 12n)$ given by $(\ref{the-generalized-Fermat-curve-F-equation})$. It is not difficult to see that the genus of $\cF$ is $(6n - 1)(12n - 1)$.

\end{remark}

\begin{proof}

Let $\cD$ be the smooth projective model in Corollary \ref{the-generalized-Mordell-curve-D-have-no-rational-points-corollary} defined by
\begin{align*}
\cD : pz^2 = 3^6\kappa^6x^{12n} - 1.
\end{align*}
By Corollary \ref{the-generalized-Mordell-curve-D-have-no-rational-points-corollary}, we know that $\cD(\bA_{\bQ})^{\mathrm{Br}} = \emptyset$. We define
\begin{align*}
\Psi :  \cF &\rightarrow \cD \\
(x : y : z) &\mapsto \left(x : y : \chi z^{6n}\right).
\end{align*}
It is clear that $\Psi$ is a $\bQ$-morphism from $\cF$ to $\cD$. Since $\cD(\bA_{\bQ})^{\mathrm{Br}} = \emptyset$, it follows from Lemma \ref{functoriality-azumaya-lemma} that $\cF(\bA_{\bQ})^{\mathrm{Br}} = \emptyset$. Hence it remains to prove that $\cF$ is everywhere locally solvable.

Note that if $l$ is an odd prime such that $l \ne 2, 3, p$ and $\gcd(l, \kappa\chi n) = 1$, then $\cF$ is nonsingular modulo $l$, and thus the genus of $\cF$ over the finite field $\bF_l$ is $(6n - 1)(12n - 1)$. Hence the Hasse-Weil bound (see Lemma \ref{Hasse-Weil-bound-lemma}) assures that $\cF$ is locally solvable at the primes $l$ such that $l > 4(6n - 1)^2(12n - 1)^2$, $l \ne 2, 3, p$ and $\gcd(l, \kappa\chi n) = 1$. Hence it suffices to consider the following cases.

$\star$ \textit{Case 1. $l = p$.}

We consider the system of equations
\begin{align}
\label{the-Hensel-system-at-p-of-the-generalized-Fermat-curve-F-equations}
\begin{cases}
G(x, y, z) := 3^6\kappa^6x^{12n} - y^{12n} - p\chi^2z^{12n} &\equiv 0 \pmod{p^{2v_p(n) + 1}} \\
\tfrac{\partial G}{\partial y}(x, y, z) = -12ny^{12n - 1}   &\equiv 0 \pmod{p^{v_p(n)}} \\
\tfrac{\partial G}{\partial y}(x, y, z) = -12ny^{12n - 1}   &\not\equiv 0 \pmod{p^{v_p(n) + 1}}.
\end{cases}
\end{align}
By $(D1)$, we know that
\begin{align*}
G(1, 1, 0) = 3^6\kappa^6 - 1 \equiv 0 \pmod{p^{2v_p(n) + 1}}.
\end{align*}
Since $p \ne 2, 3$, we deduce that
\begin{align*}
\dfrac{\partial G}{\partial y}(1, 1, 0) = -12n   &\equiv 0 \pmod{p^{v_p(n)}}, \\
\dfrac{\partial G}{\partial y}(1, 1, 0) = -12n   &\not\equiv 0 \pmod{p^{v_p(n) + 1}}.
\end{align*}
Hence $(x, y, z) = (1, 1, 0)$ is a solution to the system $(\ref{the-Hensel-system-at-p-of-the-generalized-Fermat-curve-F-equations})$, and thus it follows from Hensel's lemma that $\cF$ is locally solvable at $p$.

$\star$ \textit{Case 2. $l = 2$.}

We consider the system of equations
\begin{align}
\label{the-Hensel-system-at-2-of-the-generalized-Fermat-curve-F-equations}
\begin{cases}
G(x, y, z) := 3^6\kappa^6x^{12n} - y^{12n} - p\chi^2z^{12n} &\equiv 0 \pmod{2^{2v_2(n) + 5}} \\
\tfrac{\partial G}{\partial z}(x, y, z) = -12np\chi^2z^{12n - 1}   &\equiv 0 \pmod{2^{v_2(n) + 2}} \\
\tfrac{\partial G}{\partial z}(x, y, z) = -12np\chi^2z^{12n - 1}   &\not\equiv 0 \pmod{2^{v_2(n) + 3}}.
\end{cases}
\end{align}
By $(D2)$, we know that
\begin{align*}
G(1, 0, 1) = 3^6\kappa^6 - p\chi^2 \equiv 0 \pmod{2^{2v_2(n) + 5}}.
\end{align*}
Since $p \ne 2$ and $\chi$ is a nonzero odd integer, we see that
\begin{align*}
\dfrac{\partial G}{\partial z}(1, 0, 1) = -12np\chi^2 = -2^2\cdot3np\chi^2  &\equiv 0 \pmod{2^{v_2(n) + 2}}, \\
\dfrac{\partial G}{\partial z}(1, 0, 1) = -12np\chi^2 = -2^2\cdot3np\chi^2  &\not\equiv 0 \pmod{2^{v_2(n) + 3}}.
\end{align*}
Hence $(x, y, z) = (1, 0, 1)$ is a solution to the system $(\ref{the-Hensel-system-at-2-of-the-generalized-Fermat-curve-F-equations})$, and it thus follows from Hensel's lemma that $\cF$ is locally solvable at $2$.

$\star$ \textit{Case 3. $l = 3$.}

Using the same arguments as in \textit{Case 1} and \textit{Case 2}, it follows from $(D3)$ that $(x, y, z) = (0, 1, 1)$ is a solution to the system of equations
\begin{align*}
\begin{cases}
G(x, y, z) := 3^6\kappa^6x^{12n} - y^{12n} - p\chi^2z^{12n} &\equiv 0 \pmod{3^{2v_3(n) + 3}} \\
\tfrac{\partial G}{\partial y}(x, y, z) = -12ny^{12n - 1}   &\equiv 0 \pmod{3^{v_3(n) + 1}} \\
\tfrac{\partial G}{\partial z}(x, y, z) = -12ny^{12n - 1}    &\not\equiv 0 \pmod{3^{v_3(n) + 2}}.
\end{cases}
\end{align*}
By Hensel's lemma, we deduce that $\cF$ is locally solvable at $3$.

$\star$ \textit{Case 4. $l \in \D$.}

Using the same arguments as in \textit{Case 1} and \textit{Case 2}, we deduce from $(D4)$ that $(x, y, z) = (0, 1, 1)$ is a solution to the system of equations
\begin{align*}
\begin{cases}
G(x, y, z) := 3^6\kappa^6x^{12n} - y^{12n} - p\chi^2z^{12n} &\equiv 0 \pmod{l^{2v_l(n) + 1}} \\
\tfrac{\partial G}{\partial y}(x, y, z) = -12ny^{12n - 1}   &\equiv 0 \pmod{l^{v_l(n)}} \\
\tfrac{\partial G}{\partial y}(x, y, z) = -12ny^{12n - 1}   &\not\equiv 0 \pmod{l^{v_l(n) + 1}}.
\end{cases}
\end{align*}
By Hensel's lemma, we deduce that $\cF$ is locally solvable at $l$.

$\star$ \textit{Case 5. $l \in \E$.}

Using the same arguments as in \textit{Case 1} and \textit{Case 2}, we deduce from $(D5)$ that $(x, y, z) = (1, \zeta_l, 0)$ is a solution to the system of equations
\begin{align*}
\begin{cases}
G(x, y, z) := 3^6\kappa^6x^{12n} - y^{12n} - p\chi^2z^{12n} &\equiv 0 \pmod{l^{2v_l(n) + 1}} \\
\tfrac{\partial G}{\partial y}(x, y, z) = -12ny^{12n - 1}   &\equiv 0 \pmod{l^{v_l(n)}} \\
\tfrac{\partial G}{\partial y}(x, y, z) = -12ny^{12n - 1}   &\not\equiv 0 \pmod{l^{v_l(n) + 1}}.
\end{cases}
\end{align*}
By Hensel's lemma, we deduce that $\cF$ is locally solvable at $l$.

$\star$ \textit{Case 6. $l \in \F$.}

Using the same arguments as in \textit{Case 1} and \textit{Case 2}, we deduce from $(D6)$ that $(x, y, z) = (1, 1, 0)$ is a solution to the system of equations
\begin{align*}
\begin{cases}
G(x, y, z) := 3^6\kappa^6x^{12n} - y^{12n} - p\chi^2z^{12n} &\equiv 0 \pmod{l^{2v_l(n) + 1}} \\
\tfrac{\partial G}{\partial y}(x, y, z) = -12ny^{12n - 1}   &\equiv 0 \pmod{l^{v_l(n)}} \\
\tfrac{\partial G}{\partial y}(x, y, z) = -12ny^{12n - 1}   &\not\equiv 0 \pmod{l^{v_l(n) + 1}}.
\end{cases}
\end{align*}
By Hensel's lemma, we deduce that $\cF$ is locally solvable at $l$.

$\star$ \textit{Case 7. $l \in \G$.}

Using the same arguments as in \textit{Case 1} and \textit{Case 2}, we deduce from $(D7)$ that $(x, y, z) = (1, 1, 0)$ is a solution to the system of equations
\begin{align*}
\begin{cases}
G(x, y, z) := 3^6\kappa^6x^{12n} - y^{12n} - p\chi^2z^{12n} &\equiv 0 \pmod{l} \\
\tfrac{\partial G}{\partial y}(x, y, z) = -12ny^{12n - 1}   &\not\equiv 0 \pmod{l}.
\end{cases}
\end{align*}
By Hensel's lemma, we deduce that $\cF$ is locally solvable at $l$.

$\star$ \textit{Case 8. $l = \infty$.}

We see that the point $(x, y, z) = (1, (3^6\kappa^6)^{1/12n},0)$ belongs to $\cF(\bR)$.

Thus, by what we have shown, $\cF$ is everywhere locally solvable, and therefore our contention follows.

\end{proof}

\section{Infinitude of the quadruples $(p, n, \kappa, \chi)$}
\label{infinitude-of-the-quadruples-p-n-kappa-chi-section}

In this section, we will prove that there are infinitely many couples $(\kappa, \chi)$ satisfying $(D1)-(D7)$ in Theorem \ref{the-first-family-of-certain-generalized-Fermat-curves-violating-the-Hasse-principle-theorem}. Hence this implies that for each integer $n \ge 2$, there exist infinitely many generalized Fermat curves of signature $(12n, 12n, 12n)$ that are counterexamples to the Hasse principle explained by the Brauer-Manin obstruction. We begin by proving some elementary but very useful lemmas that we will need in the proof of the main result of this section.

\begin{lemma}
\label{the-simple-case-when-every-integer-h-is-an-s-th-power-in-certain-finite-fields-lemma}

Let $s$ be a positive odd integer. Let $(P, Q, R, S)$ be a quadruple of integers such that $R \ne 0$. Assume that the following are true:

\begin{itemize}

\item [(i)] $sP \equiv 1 \pmod{R}$ and $Q = 1 + \dfrac{sP - 1}{R}$.

\item [(ii)] $q := sS + Q$ is an odd prime.

\end{itemize}
Then every integer is an $s$-th power in $\bZ/q\bZ$.

\end{lemma}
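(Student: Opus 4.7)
The plan is to reduce the claim to the standard fact that the $s$-th power map on a cyclic group of order $m$ is a bijection iff $\gcd(s,m)=1$. Concretely, I would show $\gcd(s,q-1)=1$; then since $(\bZ/q\bZ)^\times$ is cyclic of order $q-1$, the map $x\mapsto x^s$ is a bijection on $(\bZ/q\bZ)^\times$, and since $0\mapsto 0$, every residue class is an $s$-th power.

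The key computation is the one relating the hypotheses. From (i), rewrite $Q=1+\tfrac{sP-1}{R}$ as the B\'ezout-type identity
\begin{equation*}
sP - R(Q-1)=1.
\end{equation*}
This identity immediately gives $\gcd(s,Q-1)=1$: any common divisor $d$ of $s$ and $Q-1$ divides $sP-R(Q-1)=1$. Then, using (ii), write $q-1=sS+(Q-1)$, so
\begin{equation*}
\gcd(s,q-1)=\gcd(s,\,sS+(Q-1))=\gcd(s,Q-1)=1.
\end{equation*}

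Once this is in hand, the remainder is standard: since $q$ is an odd prime, $(\bZ/q\bZ)^\times$ is cyclic of order $q-1$, and because $\gcd(s,q-1)=1$, the endomorphism $x\mapsto x^s$ is an automorphism of this group, hence surjective. Including $0^s=0$, every element of $\bZ/q\bZ$ lies in the image, which is exactly the statement that every integer is an $s$-th power modulo $q$.

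There is no real obstacle here; the only thing to verify carefully is that the divisibility needed for $Q$ to be an integer is built into (i) (namely $sP\equiv 1\pmod R$), and that the oddness of $s$ together with $q$ being an odd prime is consistent with the conclusion $\gcd(s,q-1)=1$ (if $s$ were even, $q-1$ being even would make the conclusion impossible, so the parity hypothesis on $s$ is natural even though the above algebraic argument does not explicitly invoke it).
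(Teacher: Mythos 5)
Your proof is correct, and it takes a genuinely different route from the paper's. The paper gives a \emph{constructive} argument: for $h \not\equiv 0 \pmod q$, it exhibits $h^{RS+P}$ as an explicit $s$-th root of $h$ modulo $q$, verifying directly that
\begin{align*}
(h^{RS+P})^s = h^{sRS + sP} = h^{sS+Q}\, h^{(R-1)(sS+Q-1)} = h^q \, h^{(R-1)(q-1)} \equiv h \pmod q
\end{align*}
by Fermat's little theorem (the exponent identity uses precisely $sP = 1 + R(Q-1)$ and $q = sS + Q$). You instead isolate the structural reason the lemma holds: the B\'ezout identity $sP - R(Q-1) = 1$ gives $\gcd(s, Q-1) = 1$, hence $\gcd(s, q-1) = \gcd(s, sS + (Q-1)) = 1$, and then the $s$-th power map is an automorphism of the cyclic group $(\bZ/q\bZ)^\times$. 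Your approach is conceptually cleaner and makes transparent \emph{why} the hypotheses (i) and (ii) combine to give the conclusion; the paper's approach is more self-contained (no appeal to cyclicity of $(\bZ/q\bZ)^\times$, only Fermat) and hands you an explicit root, which can be handy in algorithmic contexts. Your closing remark about the parity of $s$ is also apt: in fact (i) forces $Q$ even when $s$ is even (since $Q-1$ must be coprime to $s$), and then $q = sS+Q$ would be even, contradicting (ii); so $s$ odd is implied by the other hypotheses, and neither proof needs it as a separate ingredient.
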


\begin{proof}

Let $h$ be an integer. If $h \equiv 0 \pmod{q}$, then $h$ is an $s$-th power in $\bZ/q\bZ$. If $h \not\equiv 0 \pmod{q}$, then we see that
\begin{align*}
(h^{RS + P})^s= h^{sRS + sP} = h^{sS + Q}h^{(R - 1)(sS + Q - 1)} = h^qh^{(R - 1)(q - 1)} \equiv h \pmod{q}.
\end{align*}
Therefore $h$ is an $s$-th power in $\bZ/q\bZ$, which proves our contention.

\end{proof}

\begin{lemma}
\label{the-existence-of-primes-such-that-any-integer-is-s-th-power-lemma}

Let $s$ be a positive odd integer, and let $r$ be an integer such that $\gcd(r -  1, s) = 1$. Let $S$ be an integer, and define $q := sS + r$. Assume that $q$ is an odd prime. Then every integer is an $s$-th power in $\bZ/q\bZ$.

\end{lemma}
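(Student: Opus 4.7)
The plan is to reduce the claim to the standard fact that, in the cyclic group $(\bZ/q\bZ)^\times$ of order $q-1$, the $s$-th power map is a bijection precisely when $\gcd(s, q-1) = 1$. Concretely, I would observe that
\begin{align*}
q - 1 = sS + (r - 1),
\end{align*}
so $\gcd(s, q - 1) = \gcd(s, sS + (r - 1)) = \gcd(s, r - 1)$, which equals $1$ by hypothesis. Since $q$ is prime, $(\bZ/q\bZ)^\times$ is cyclic of order $q - 1$, and the map $x \mapsto x^s$ on this group is a bijection whenever $\gcd(s, q-1) = 1$. Together with $0 = 0^s$, this yields that every residue class modulo $q$ is an $s$-th power.

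Alternatively, if one wishes to derive the result directly from Lemma \ref{the-simple-case-when-every-integer-h-is-an-s-th-power-in-certain-finite-fields-lemma} rather than appealing to the structure of $(\bZ/q\bZ)^\times$, the plan is to realize the hypotheses of that lemma. Since $\gcd(s, r - 1) = 1$, Bezout's identity produces integers $P, R$ with $sP - (r - 1)R = 1$; the family of solutions is $(P + (r-1)t,\; R + st)$ for $t \in \bZ$, so one can ensure $R \ne 0$ (the only potentially degenerate case is $s = 1$, which is trivial). Setting $Q := 1 + (sP - 1)/R = r$, condition $(i)$ of Lemma \ref{the-simple-case-when-every-integer-h-is-an-s-th-power-in-certain-finite-fields-lemma} holds, and then $q = sS + Q = sS + r$ satisfies condition $(ii)$, so that lemma applies and delivers the conclusion.

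The only mild subtlety is making sure $R \ne 0$ in the Bezout step, which is handled by shifting $t$; otherwise both approaches are routine. I would favor the first (direct) approach for clarity, since it avoids the Bezout bookkeeping and exposes the true content of the hypothesis $\gcd(r - 1, s) = 1$, namely that it is exactly what is needed to force $\gcd(s, q - 1) = 1$.
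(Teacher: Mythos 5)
Your proposal is correct, and your second "alternative" route is essentially verbatim the paper's proof: the paper writes $sP + (1-r)R = 1$ (equivalently your $sP - (r-1)R = 1$), sets $Q := 1 + (sP-1)/R$, observes $Q = r$, and invokes Lemma~\ref{the-simple-case-when-every-integer-h-is-an-s-th-power-in-certain-finite-fields-lemma}. The paper asserts the existence of \emph{nonzero} $P, R$ without comment; your remark that one may shift along the solution family $(P + (r-1)t, R + st)$ to force $R \neq 0$ fills in a small gap the paper glosses over.

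Your first approach is a genuinely different and, I would agree, cleaner route. The computation
\begin{align*}
\gcd(s, q-1) = \gcd\bigl(s, sS + (r-1)\bigr) = \gcd(s, r-1) = 1
\end{align*}
directly identifies the hypothesis $\gcd(r-1, s) = 1$ as exactly the condition making $x \mapsto x^s$ a bijection on the cyclic group $(\bZ/q\bZ)^\times$, which together with $0 = 0^s$ gives the claim. What the paper's route buys, via Lemma~\ref{the-simple-case-when-every-integer-h-is-an-s-th-power-in-certain-finite-fields-lemma}, is an explicit $s$-th root, namely $h^{RS+P}$; but since the lemma is only used as an existence statement later in the paper, your direct argument would serve just as well and avoids the Bezout bookkeeping and the separate auxiliary lemma entirely. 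Both are correct; the conceptual content is the same coprimality fact either way.
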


\begin{proof}

We show that there is a triple $(P, Q, R)$ of integers with $R \ne 0$ such that the quintuple $(q, P, Q, R, S)$ satisfies conditions $(i)$ and $(ii)$ in Lemma \ref{the-simple-case-when-every-integer-h-is-an-s-th-power-in-certain-finite-fields-lemma}. Indeed, since $\gcd(r - 1, s) = 1$, there exist nonzero integers $P, R$ such that
\begin{align*}
sP + (1 - r)R = 1.
\end{align*}
Hence we deduce that
\begin{align*}
sP - 1 = R(r - 1),
\end{align*}
and thus $sP \equiv 1 \pmod{R}$. Define
\begin{align*}
Q := 1 + \dfrac{sP - 1}{R}.
\end{align*}
Since $R$ divides $sP - 1$, we know that $Q$ is an integer. Since $sP - 1 = R(r - 1)$, we deduce that
\begin{align*}
Q = 1 + \dfrac{sP - 1}{R} = 1 + r - 1 = r,
\end{align*}
and hence $q = sS + r = sS + Q$. Thus it follows from Lemma \ref{the-simple-case-when-every-integer-h-is-an-s-th-power-in-certain-finite-fields-lemma} that every integer is an $s$-th power in $\bZ/q\bZ$.

\end{proof}

\begin{lemma}
\label{the-choice-of-sign-for-Delta-lemma}

Let $l$ be an odd prime, and let $r$ be an integer such that $\gcd(r, l) = 1$. Then at least one of the integers $r - 1$ and $-r - 1$ is relatively prime to $l$.

\end{lemma}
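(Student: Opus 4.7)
The plan is to argue by contradiction. Suppose for contradiction that $l$ divides both $r-1$ and $-r-1$. Then $l$ divides their sum
\begin{align*}
(r - 1) + (-r - 1) = -2,
\end{align*}
so $l \mid 2$. Since $l$ is an odd prime, this is impossible, giving the desired contradiction. Therefore at least one of $r-1$ and $-r-1$ is coprime to $l$.

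Equivalently (and this is the form I would actually write): if $l \mid r - 1$ then $r \equiv 1 \pmod{l}$, whence $-r - 1 \equiv -2 \pmod{l}$, and since $l$ is odd we have $\gcd(-r-1, l) = \gcd(-2, l) = 1$; the case $l \mid -r-1$ is symmetric. There is no genuine obstacle here — the hypothesis $\gcd(r, l) = 1$ is not even needed, the only input being that $l$ is an odd prime.
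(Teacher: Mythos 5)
Your proof is correct and takes essentially the same approach as the paper: assume $l$ divides both $r-1$ and $-r-1$, add to get $l \mid -2$, and contradict $l$ being odd. Your observation that the hypothesis $\gcd(r,l)=1$ is unnecessary is also accurate, though the paper does not remark on it.
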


\begin{proof}

Assume the contrary, that is, $r - 1 \equiv 0 \pmod{l}$ and $-r - 1 \equiv 0 \pmod{l}$. Hence we deduce that
\begin{align*}
-2 \equiv (r - 1) + (-r - 1) \equiv 0 \pmod{l},
\end{align*}
which is a contradiction since $l$ is an odd prime. Thus our contention follows.

\end{proof}

\begin{lemma}
\label{every-square-is-a-2-power-in-the-finite-field-F-l-with-l-congruent-to-3-mod-4-lemma}

Let $l$ be an odd prime such that $l \equiv 3 \pmod{4}$. Let $x$ be an integer such that $x$ is a square in $\bF_l^{\times}$. Then, for any positive integer $n$, $x$ is a $2^n$-th power in $\bF_l^{\times}$, that is, there exists an integer $x_{\ast}$ such that $x_{\ast} \not\equiv 0 \pmod{l}$ and $x_{\ast}^{2^n} \equiv x \pmod{l}$.

\end{lemma}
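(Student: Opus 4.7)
The plan is to exploit the structure of $\bF_l^\times$ as a cyclic group and note that when $l \equiv 3 \pmod 4$, the $2$-part of $|\bF_l^\times|$ is as small as possible. Concretely, since $l \equiv 3 \pmod 4$, we may write $l - 1 = 2m$ with $m$ odd. Let $H \subseteq \bF_l^\times$ denote the subgroup of nonzero squares. Then $H$ is cyclic of order $m$, which is odd.

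Next, I would consider the endomorphism $\phi : H \to H$ given by $\phi(y) = y^{2^n}$. Since $|H| = m$ is odd and $\gcd(2^n, m) = 1$, the homomorphism $\phi$ is an automorphism of $H$; equivalently, raising to the $2^n$-th power permutes the squares in $\bF_l^\times$. In particular, the given square $x \in H$ lies in the image of $\phi$, so there exists $x_\ast \in H$ with $x_\ast^{2^n} \equiv x \pmod{l}$. Because $x_\ast \in H \subseteq \bF_l^\times$, we have $x_\ast \not\equiv 0 \pmod{l}$, as required.

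The argument really only uses one nontrivial fact, namely that the odd prime $l \equiv 3 \pmod 4$ forces the $2$-Sylow subgroup of $\bF_l^\times$ to have order exactly $2$, which in turn forces the subgroup of squares to have odd order. There is no substantive obstacle; the only thing to double-check is the elementary observation $\gcd(2^n, m) = 1$ for odd $m$, which is immediate. If a more hands-on proof is preferred, one can instead induct on $n$: the base case $n=1$ is the hypothesis that $x$ is a square, and for the inductive step, writing $x = g^{2k}$ for a generator $g$ of $\bF_l^\times$, exactly one of the two square roots $g^k, g^{k+m}$ is itself a square (since they differ by $g^m = -1$, which is a non-square as $l \equiv 3 \pmod 4$), so $x$ admits a square root which is a square, and one applies the inductive hypothesis to that root.
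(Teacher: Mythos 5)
Your proof is correct, and your main argument takes a genuinely different route from the paper. The paper proceeds by induction on $n$: given (by the inductive hypothesis) some $h$ with $h^{2^{n-1}} \equiv x \pmod{l}$, it observes that one of $h$, $-h$ must be a square (because $-1$ is a non-residue when $l \equiv 3 \pmod 4$), and since $2^{n-1}$ is even for $n \ge 2$, the sign can be flipped without affecting $h^{2^{n-1}}$. Your primary argument instead works directly with the group structure: the subgroup $H$ of squares has odd order $m = (l-1)/2$, so $\gcd(2^n, m) = 1$ and the $2^n$-power map is an automorphism of $H$, making the surjectivity immediate. Your version isolates the real mechanism more cleanly (the $2$-Sylow of $\bF_l^\times$ has order exactly $2$), while the paper's inductive version is more hands-on and avoids invoking group-theoretic language. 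You also sketch an inductive variant at the end; it is organized a bit differently from the paper's (you pick a square root of $x$ that is itself a square and then apply the inductive hypothesis to it, whereas the paper applies the inductive hypothesis first and then takes a square root), but both land in the same place and rely on the same use of $-1$ being a non-residue.
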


\begin{proof}

We prove Lemma \ref{every-square-is-a-2-power-in-the-finite-field-F-l-with-l-congruent-to-3-mod-4-lemma} by induction over $n$.

If $n = 1$, then it follows immediately from the assumption that $x$ is a square in $\bF_l^{\times}$. Assume that Lemma \ref{every-square-is-a-2-power-in-the-finite-field-F-l-with-l-congruent-to-3-mod-4-lemma} is true for $n - 1$ with $n \ge 2$. We will prove that it is also true for $n$. Indeed, by the induction hypothesis, we know that there is an integer $h$ such that $h^{2^{n - 1}} \equiv x \pmod{l}$. Since $x$ belongs to $\bF_l^{\times}$, the last congruence implies that $h$ belongs to $\bF_l^{\times}$. If $h$ is a square in $\bF_l^{\times}$, then it follows that $x_{\ast}^{2^n} \equiv x \pmod{l}$, where $x_{\ast}$ is an integer such that $x_{\ast}^2 \equiv h \pmod{l}$. Assume now that $h$ is not a square in $\bF_l^{\times}$, that is, $\left(\dfrac{h}{l}\right) = - 1$, where $\left(\dfrac{\cdot}{\cdot}\right)$ denotes the Jacobi symbol. Since $l \equiv 3 \pmod{4}$, we know that $\left(\dfrac{-1}{l}\right) = -1$, and hence it follows that
\begin{align*}
\left(\dfrac{-h}{l}\right) = \left(\dfrac{h}{l}\right)\left(\dfrac{-1}{l}\right) = 1.
\end{align*}
Thus there is an integer $x_{\ast}$ such that $x_{\ast}^2 \equiv -h \pmod{l}$. Therefore we deduce that
\begin{align*}
x_{\ast}^{2^n} = (x_{\ast}^2)^{2^{n - 1}} \equiv (-h)^{2^{n-1}} \equiv (-1)^{2^{n - 1}}h^{2^{n - 1}} \equiv x \pmod{l},
\end{align*}
which proves our contention.

\end{proof}

\begin{lemma}
\label{infinitude-of-the-quadruples-p-n-kappa-chi-lemma}

Let $p$ be a prime such that $p \equiv 1 \pmod{8}$ and $p \equiv 2 \pmod{3}$. Let $n$ and $m$ be positive integers such that $n \ge 2$ and $1 \le m < n$. Then, for any positive integer $r$ dividing $m$, there are infinitely many couples $(\kappa, \chi)$ of integers such that the following are true:
\begin{itemize}

\item [(i)] $\kappa$ satisfies $(B3)-(B5)$ in Section \ref{a-subset-of-the-set-of-rational-points-on-X-p-section}.

\item [(ii)] $\kappa = 3^{2m - 1}\kappa_{\ast}^r$ for some odd prime $\kappa_{\ast}$ with $\kappa_{\ast} \ne 3$.

\item [(iii)] $\chi$ is an odd prime.

\item [(iv)] $(p, n, \kappa, \chi)$ satisfies $(D1)-(D7)$ in Theorem \ref{the-first-family-of-certain-generalized-Fermat-curves-violating-the-Hasse-principle-theorem}.

\end{itemize}

\end{lemma}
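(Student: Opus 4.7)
The plan is to construct $(\kappa,\chi)$ by two successive applications of Dirichlet's theorem on primes in arithmetic progressions. First I build $\kappa$ by a refinement of the recipe in Lemma \ref{infinitude-of-the-triples-p-n-kappa-lemma}, additionally imposing $\kappa_{\ast}\equiv 1\pmod{4}$ and $\kappa_{\ast}>n$; then, with $\kappa$ fixed, I build $\chi$ as an odd prime satisfying a list of congruences that secure $(D2)$--$(D5)$.

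Set $s:=m/r$. Let $\tilde{\A}^{\ast}$ be the set of odd primes $l$ dividing $n$ with $\gcd(l,3p)=1$, and $\tilde{\B}^{\ast}$ the set of odd primes $l$ with $\gcd(l,3pn)=1$ and $l\le 4(6n-1)^{2}(12n-1)^{2}$; these are the enlargements of the $\A^{\ast},\B^{\ast}$ in the proof of Lemma \ref{infinitude-of-the-triples-p-n-kappa-lemma} obtained by dropping the quadratic residue requirement (which is unnecessary here since $(D6), (D7)$ contain no such condition). By the Chinese Remainder Theorem pick $\kappa_{\ast,0}$ with $\kappa_{\ast,0}\equiv 1/3^{2s}$ modulo $p^{2v_{p}(n)+1}$, modulo $l^{2v_{l}(n)+1}$ for every $l\in\tilde{\A}^{\ast}$, modulo $l$ for every $l\in\tilde{\B}^{\ast}$, together with $\kappa_{\ast,0}\equiv 1\pmod{4}$. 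Dirichlet's theorem supplies infinitely many odd primes $\kappa_{\ast}>n$, $\kappa_{\ast}\ne 3,p$ in the resulting progression; set $\kappa:=3^{2m-1}\kappa_{\ast}^{r}$. The verification of $(B3)$--$(B5)$, $(D1)$, $(D6)$, $(D7)$ then runs verbatim as in Lemma \ref{infinitude-of-the-triples-p-n-kappa-lemma}, and the additional condition $\kappa_{\ast}\equiv 1\pmod{4}$, combined with the fact that $\kappa_{\ast}\equiv(1/3^{s})^{2}$ is a square modulo $p$, yields by quadratic reciprocity the key identity $\left(\tfrac{-1/p}{\kappa_{\ast}}\right)=1$.

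For the second step, I will choose $\chi$ coprime to $6p\kappa n$ and larger than every previously mentioned prime, so that automatically $\D=\{\kappa_{\ast}\}$ and $\E=\{\chi\}$ with $v_{\chi}(n)=v_{\kappa_{\ast}}(n)=0$. Since $p,3^{6},\kappa^{6}\equiv 1\pmod{8}$ (the last because $\kappa$ is odd), we have $3^{6}\kappa^{6}/p\equiv 1\pmod{8}$, hence it is a quadratic residue modulo $2^{2v_{2}(n)+5}$; $-1/p\equiv 1\pmod{3}$ is a quadratic residue modulo $3^{2v_{3}(n)+3}$; and $-1/p$ is a quadratic residue modulo $\kappa_{\ast}$ by step one. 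Fix square roots $c_{2},c_{3},c_{\kappa_{\ast}}$ of these elements with $c_{2}\equiv 3\pmod{4}$ and $c_{3}\equiv 2\pmod{3}$, and invoke Dirichlet's theorem to find infinitely many odd primes $\chi$ satisfying
\[
\chi\equiv c_{2}\pmod{2^{2v_{2}(n)+5}},\quad \chi\equiv c_{3}\pmod{3^{2v_{3}(n)+3}},\quad \chi\equiv c_{\kappa_{\ast}}\pmod{\kappa_{\ast}},
\]
together with $\chi\equiv 2\pmod{l}$ for every odd prime $l\mid n$ coprime to $3\kappa_{\ast}$ (this includes $l=p$ if $p\mid n$). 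Conditions $(D2)$, $(D3)$, $(D4)$ then hold by construction.

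To verify $(D5)$, write $n=2^{a}n'$ with $n'$ odd. The above congruences force $\chi\equiv 3\pmod{4}$ and $\chi\equiv 2\not\equiv 1\pmod{l}$ for every prime $l\mid n'$, so $\gcd(n',\chi-1)=1$ and every element of $\bF_{\chi}^{\times}$ is an $n'$-th power; meanwhile Lemma \ref{every-square-is-a-2-power-in-the-finite-field-F-l-with-l-congruent-to-3-mod-4-lemma} shows that every square in $\bF_{\chi}^{\times}$ is a $2^{a+1}$-th power. Composing, every square in $\bF_{\chi}^{\times}$ is a $2n$-th power. Since $\chi\equiv 3\pmod{4}$, exactly one of $3\kappa,-3\kappa$ is a nonzero square modulo $\chi$, and the corresponding sign yields $\zeta_{\chi}$ with $\zeta_{\chi}^{2n}\equiv\pm 3\kappa\pmod{\chi}$, that is, $\kappa\equiv\pm\zeta_{\chi}^{2n}/3\pmod{\chi}$, which is $(D5)$. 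The main obstacle is the careful bookkeeping needed to ensure pairwise coprimality of all the moduli in the two CRT applications and coprimality of each chosen residue with its modulus, so that Dirichlet's theorem is applicable; once these routine checks are dispatched, the infinitude of pairs $(\kappa,\chi)$ follows immediately.
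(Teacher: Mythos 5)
Your proposal is correct and follows the same two-step CRT-plus-Dirichlet strategy as the paper's proof, using the same key tool (Lemma \ref{every-square-is-a-2-power-in-the-finite-field-F-l-with-l-congruent-to-3-mod-4-lemma}) for the $2$-power part of $(D5)$. The two places where your treatment diverges are both genuine simplifications: first, by taking $\kappa_{\ast}>n$ and $\chi$ larger than every relevant prime you force $v_{\kappa_{\ast}}(n)=v_{\chi}(n)=0$, which collapses the exponents $2v_{\kappa_{\ast}}(n)+1$ and $2v_{\chi}(n)+1$ to $1$ and removes a layer of bookkeeping that the paper carries through; second, you replace the paper's auxiliary Lemmas \ref{the-simple-case-when-every-integer-h-is-an-s-th-power-in-certain-finite-fields-lemma} and \ref{the-existence-of-primes-such-that-any-integer-is-s-th-power-lemma} (and the factorization $\chi=n_{\ast}\Sigma_{n_{\ast}}+\chi_{\ast}$) with the direct observation that $\gcd(n',\chi-1)=1$ makes the $n'$-th power map on $\bF_{\chi}^{\times}$ a bijection, which is exactly what those lemmas encode in a more elaborate form. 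The paper's extra generality and extra lemmas are not needed here, so your version is a bit leaner, but the core argument — building $\kappa_{\ast}$ with $\kappa_{\ast}\equiv 1\pmod 4$ and $\kappa_{\ast}$ a square mod $p$ so that $-1/p$ is a square mod $\kappa_{\ast}$, then shaping $\chi$ by CRT to hit prescribed square roots mod $2^{\bullet},3^{\bullet},\kappa_{\ast}$ while forcing $\chi\equiv 3\pmod 4$ and $\gcd(n',\chi-1)=1$ for the $2n$-th power argument — is the same.
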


\begin{proof}

Let $r$ be a positive integer such that $r$ divides $m$, and define
\begin{align*}
s := \dfrac{m}{r}.
\end{align*}
Let $\F^{\ast}$ be the set of odd primes $l$ satisfying the following two conditions:
\begin{itemize}

\item [(i)] $\gcd(l, 3) = \gcd(l, p) = 1$, and

\item [(ii)] $l$ divides $n$.

\end{itemize}
Let $\G^{\ast}$ be the set of odd primes $l$ satisfying the following two conditions:
\begin{itemize}

\item [(i)] $\gcd(l, 3) = \gcd(l, p) = \gcd(l, n) = 1$, and

\item [(ii)] $l \le 4(6n - 1)^2(12n - 1)^2$.

\end{itemize}

$\star$ \textit{Step 1. Choosing $\kappa$.}

Note that $2,3$ and $p$ do not belong to $\F^{\ast}$ and $\G^{\ast}$, and that $\F^{\ast} \cap \G^{\ast} = \emptyset$. Hence, by the Chinese Remainder Theorem, there exists an integer $\kappa_{\ast, 0}$ such that the following are true:
\begin{itemize}

\item [(E1)] $\kappa_{\ast, 0} \equiv 1 \pmod{4}$,

\item [(E2)] $\kappa_{\ast, 0} \equiv \dfrac{1}{3^{2s}} \pmod{p^{2v_p(n) + 1}}$,

\item [(E3)] $\kappa_{\ast, 0} \equiv \dfrac{1}{3^{2s}} \pmod{l^{2v_l(n) + 1}}$ for each $l \in \F^{\ast}$, and

\item [(E4)] $\kappa_{\ast, 0} \equiv \dfrac{1}{3^{2s}} \pmod{l}$ for each $l \in \G^{\ast}$.

\end{itemize}
Let $Q(X) \in \bZ[X]$ be the linear polynomial defined by
\begin{align}
\label{the-linear-polynomial-Q(X)-defines-integers-kappa-ast-definition}
Q(X) := 2^2\cdot p^{2v_p(n) + 1}\left(\prod_{l \in \F^{\ast}} l^{2v_l(n) + 1} \right)\left(\prod_{l \in \G^{\ast}} l\right)X + \kappa_{\ast, 0}.
\end{align}
By $(E1), (E2), (E3)$ and $(E4)$ above, we know that
\begin{align*}
\gcd\left(\kappa_{\ast, 0}, 2^2\cdot p^{2v_p(n) + 1}\left(\prod_{l \in \F^{\ast}} l^{2v_l(n) + 1} \right)\left(\prod_{l \in \G^{\ast}} l\right)\right) = 1.
\end{align*}
Applying Dirichlet's theorem on arithmetic progressions, we deduce that there are infinitely many integers $X$ such that $Q(X) \ne 3$, $Q(X) \ne p$ and $Q(X)$ is an odd prime. Take such an integer $X$, and set
\begin{align*}
\kappa_{\ast} := Q(X).
\end{align*}
Define
\begin{align}
\label{the-definition-of-kappa-satisfying-conditions-Di-definition}
\kappa := 3^{2m - 1}\kappa_{\ast}^r.
\end{align}

$\star$ \textit{Step 2. Choosing $\chi_{\ast}$.}

By $(\ref{the-linear-polynomial-Q(X)-defines-integers-kappa-ast-definition})$ and $(E2)$, we see that
\begin{align*}
\kappa_{\ast} \equiv \kappa_{\ast, 0} \equiv \dfrac{1}{3^{2s}} \pmod{p^{2v_p(n) + 1}}.
\end{align*}
This implies that $\kappa_{\ast} \equiv \dfrac{1}{3^{2s}} \pmod{p}$, and hence it follows that $\left(\dfrac{\kappa_{\ast}}{p}\right) = 1$, where $\left(\dfrac{\cdot}{\cdot}\right)$ denotes the Jacobi symbol. Since $\kappa_{\ast}$ is an odd prime and $p \equiv 1 \pmod{8}$, it follows from the quadratic reciprocity law that
\begin{align*}
\left(\dfrac{p}{\kappa_{\ast}}\right) = 1.
\end{align*}
By $(\ref{the-linear-polynomial-Q(X)-defines-integers-kappa-ast-definition})$ and $(E1)$, we deduce that $\kappa_{\ast} \equiv \kappa_{\ast, 0} \equiv 1 \pmod{4}$. Hence $-1$ is a square in the finite field $\bF_{\kappa_{\ast}}$, and thus it follows that
\begin{align*}
\left(\dfrac{-p}{\kappa_{\ast}}\right) = \left(\dfrac{-1}{\kappa_{\ast}}\right)\left(\dfrac{p}{\kappa_{\ast}}\right)= 1.
\end{align*}
Therefore $-\dfrac{1}{p}$ is a square in $\bZ_{\kappa_{\ast}}^{\times}$, and hence there exist an element $\Gamma_{\kappa_{\ast}}$ in $\bZ_{\kappa_{\ast}}^{\times}$ and an integer $\Gamma_{\kappa_{\ast}, 0}$ such that
\begin{align}
\label{the-definition-of-Gamma-kappa-ast-definition}
\begin{cases}
\Gamma_{\kappa_{\ast}}^2 &= -\dfrac{1}{p} \\
\Gamma_{\kappa_{\ast}}   &\equiv \Gamma_{\kappa_{\ast}, 0}  \pmod{\kappa_{\ast}^{2v_{\kappa_{\ast}}(n) + 1}}.
\end{cases}
\end{align}

By assumption, we know that $-p \equiv -2 \equiv 1 \pmod{3}$, and hence we deduce that $-\dfrac{1}{p}$ is a square in $\bZ_3^{\times}$. Thus there exist an element $\Gamma_{3}$ in $\bZ_3^{\times}$ and an integer $\Gamma_{3, 0}$ such that
\begin{align}
\label{the-definition-of-Gamma-3-definition}
\begin{cases}
\Gamma_{3}^2 &= -\dfrac{1}{p} \\
\Gamma_{3}   &\equiv \Gamma_{3, 0}  \pmod{3^{2v_{3}(n) + 3}}.
\end{cases}
\end{align}

By assumption, we know that $p \equiv 1 \pmod{8}$, and hence we deduce that $\dfrac{1}{p}$ is a square in $\bZ_2^{\times}$. Thus there exist an element $\Gamma_{2}$ in $\bZ_2^{\times}$ and an integer $\Gamma_{2, 0}$ such that
\begin{align}
\label{the-definition-of-Gamma-2-definition}
\begin{cases}
\Gamma_{2}^2 &= \dfrac{1}{p} \\
\Gamma_{2}   &\equiv \Gamma_{2, 0}  \pmod{2^{2v_{2}(n) + 5}}.
\end{cases}
\end{align}

Let $\Delta_2, \Delta_3$ and $\Delta_{\kappa_{\ast}}$ be integers in $\{\pm 1\}$, which will be determined later. Let $\H$ be the set of odd primes $l$ satisfying the following two conditions:
\begin{itemize}

\item [(i)] $\gcd(l, 3) = \gcd(l, \kappa_{\ast}) = 1$, and

\item [(ii)] $l$ divides $n$.

\end{itemize}
Since $\kappa_{\ast} \ne 2, 3$ and $\H$ does not contain $2, 3$ and $\kappa_{\ast}$, it follows from the Chinese remainder theorem that there exists an integer $\chi_{\ast}$ such that the following are true:
\begin{itemize}

\item [(E5)] $\chi_{\ast} \equiv \Theta_2 \pmod{2^{2v_2(n) + 5}}$,

\item [(E6)] $\chi_{\ast} \equiv \Theta_3 \pmod{3^{2v_3(n) + 3}}$,

\item [(E7)] $\chi_{\ast} \equiv \Theta_{\kappa_{\ast}} \pmod{\kappa_{\ast}^{2v_{\kappa_{\ast}}(n) + 1}}$, and

\item [(E8)] $\chi_{\ast} \equiv \Theta_l \pmod{l^{v_l(n)}}$ for each prime $l \in \H$.

\end{itemize}
Here
\begin{align}
\label{the-definitions-of-Theta-2-3-kappa-ast-definition}
\begin{cases}
\Theta_2 &:= \Delta_2\Gamma_{2, 0}3^3\kappa^3 \\
\Theta_3 &:= \Delta_3\Gamma_{3, 0} \\
\Theta_{\kappa_{\ast}} &:= \Delta_{\kappa_{\ast}}\Gamma_{\kappa_{\ast}, 0},
\end{cases}
\end{align}
and for each prime $l \in \H$, take $\Theta_l$ to be an integer such that
\begin{align}
\label{the-definitions-of-Theta-l-for-each-prime-l-in-H-definition}
\gcd(\Theta_l, l) = \gcd(\Theta_l - 1, l) = 1.
\end{align}
Note that for each prime $l \in \H$, there exist infinitely many integers $\Theta_l$ satisfying $(\ref{the-definitions-of-Theta-l-for-each-prime-l-in-H-definition})$; for example, one can take $\Theta_l$ to be any integer such that $\Theta_l \equiv 2 \pmod{l}$ for each prime $l \in \H$.

$\star$ \textit{Step 3. Choosing $\Delta_2, \Delta_3$ and $\Delta_{\kappa_{\ast}}$.}

We first choose $\Delta_2$. Since $\Gamma_{2, 0}^2 \equiv \Gamma_{2}^2 = \dfrac{1}{p} \pmod{2^{2v_{2}(n) + 5}}$, it follows that $\Gamma_{2, 0}$ is odd, and hence either $\Gamma_{2, 0} \equiv 1 \pmod{4}$ or $\Gamma_{2, 0} \equiv -1 \pmod{4}$. By $(\ref{the-linear-polynomial-Q(X)-defines-integers-kappa-ast-definition})$, $(\ref{the-definition-of-kappa-satisfying-conditions-Di-definition})$ and $(E1)$, we see that
\begin{align*}
3^3\kappa^3 = 3^3\left(3^{2m - 1}\kappa_{\ast}^r\right)^3 \equiv (-1)^3(-1)^{3(2m - 1)}\kappa_{\ast, 0}^{3r} \equiv 1 \pmod{4}.
\end{align*}
Thus we deduce that either $\Gamma_{2, 0}3^3\kappa^3 \equiv 1 \pmod{4}$ or $\Gamma_{2, 0}3^3\kappa^3 \equiv -1 \pmod{4}$. We choose $\Delta_2 \in \{\pm 1\}$ such that
\begin{align}
\label{the-congruence-of-Theta-2-mod-4-equation}
\Theta_2 = \Delta_2\Gamma_{2, 0}3^3\kappa^3 \equiv -1 \equiv 3 \pmod{4}.
\end{align}

We now choose $\Delta_3$. Since $\Gamma_{3, 0}^2 \equiv \Gamma_{3}^2 = -\dfrac{1}{p} \pmod{3^{2v_{2}(n) + 3}}$ and $-p \equiv -2 \equiv 1 \pmod{3}$, it follows that $\Gamma_{3, 0}^2 \equiv -\dfrac{1}{p} \equiv 1 \pmod{3}$, and hence either $\Gamma_{3, 0} \equiv 1 \pmod{3}$ or $\Gamma_{3, 0} \equiv -1 \pmod{3}$. We choose $\Delta_3 \in \{\pm 1\}$ such that
\begin{align}
\label{the-congruence-of-Theta-3-mod-3-equation}
\Theta_3 = \Delta_3\Gamma_{3, 0} \equiv -1 \equiv 2 \pmod{3}.
\end{align}

We now define $\Delta_{\kappa_{\ast}}$. By $(\ref{the-definition-of-Gamma-kappa-ast-definition})$, we know that $\Gamma_{\kappa_{\ast}, 0} \not\equiv 0 \pmod{\kappa_{\ast}}$, and hence $\gcd\left(\Gamma_{\kappa_{\ast}, 0}, \kappa_{\ast}\right) = 1$. Using Lemma \ref{the-choice-of-sign-for-Delta-lemma} with $(r, l)$ replaced by $(\Gamma_{\kappa_{\ast}, 0}, \kappa_{\ast})$, we deduce that there is a choice of $\Delta_{\kappa_{\ast}} \in \{\pm 1\}$ such that
\begin{align}
\label{the-congruence-of-Theta-kappa-ast-mod-kappa-ast-equation}
\gcd\left(\Theta_{\kappa_{\ast}} - 1, \kappa_{\ast}\right) = \gcd\left(\Delta_{\kappa_{\ast}}\Gamma_{\kappa_{\ast}, 0} - 1, \kappa_{\ast}\right) = 1.
\end{align}

$\star$ \textit{Step 4. Choosing $\chi$.}

Define
\begin{align}
\label{the-definition-of-Upsilon-definition}
\Upsilon := 2^{2v_2(n) + 5}\cdot 3^{2v_3(n) + 3}\cdot \kappa_{\ast}^{2v_{\kappa_{\ast}}(n) + 1}\prod_{l \in \H}l^{v_l(n)},
\end{align}
where $\H$ was defined in \textit{Step 2}. Let $R(Y) \in \bZ[Y]$ be the linear polynomial defined by
\begin{align}
\label{the-definition-of-R(Y)-generating-chi-equation}
R(Y) := \Upsilon Y + \chi_{\ast},
\end{align}
where $\chi_{\ast}$ was chosen in \textit{Step 2}. By $(E5), (E6)$, $(E7)$ and $(E8)$, we see that
\begin{align*}
\gcd\left(\chi_{\ast}, \Upsilon\right) = 1.
\end{align*}
By Dirichlet's theorem on arithmetic progressions, we deduce that there are infinitely many integers $Y$ such that $R(Y)$ is an odd prime. Take such an integer $Y$, and define
\begin{align}
\label{the-definition-of-chi-satisfying-Di-equation}
\chi := R(Y).
\end{align}

Set
\begin{align}
\label{the-definition-of-n-ast-definition}
n_{\ast} := \dfrac{n}{2^{v_2(n)}} \in \bZ.
\end{align}
By $(\ref{the-definition-of-Upsilon-definition})$ and noting that $n_{\ast}$ can be written in the form
\begin{align*}
n_{\ast} = \dfrac{n}{2^{v_2(n)}} = 3^{v_3(n)}\kappa_{\ast}^{v_{\kappa_{\ast}}(n)}\prod_{l \in \H}l^{v_l(n)},
\end{align*}
we see that $\Upsilon$ can be written in the form
\begin{align*}
\Upsilon =  \left(2^{2v_2(n) + 5} \cdot 3^{v_3(n) + 3}\cdot \kappa_{\ast}^{v_{\kappa_{\ast}}(n) + 1}\right)n_{\ast}.
\end{align*}
Hence we deduce that
\begin{align*}
\chi = R(Y) = \Upsilon Y + \chi_{\ast} = \left(2^{2v_2(n) + 5} \cdot 3^{v_3(n) + 3}\cdot \kappa_{\ast}^{v_{\kappa_{\ast}}(n) + 1}\right)n_{\ast} Y + \chi_{\ast},
\end{align*}
and thus we have that
\begin{align}
\label{the-representation-of-the-prime-chi-in-terms-of-n-ast-and-chi-ast-equation}
\chi = n_{\ast}\Sigma_{n_{\ast}} + \chi_{\ast},
\end{align}
where
\begin{align*}
\Sigma_{n_{\ast}} := \left(2^{2v_2(n) + 5} \cdot 3^{v_3(n) + 3}\cdot \kappa_{\ast}^{v_{\kappa_{\ast}}(n) + 1}\right)Y.
\end{align*}

$\star$ \textit{Step 5. Verifying $(B3)-(B5)$ in Section \ref{a-subset-of-the-set-of-rational-points-on-X-p-section} and $(D1)-(D7)$ in Theorem \ref{the-first-family-of-certain-generalized-Fermat-curves-violating-the-Hasse-principle-theorem}.}

We first verify $(B3)-(B5)$ in Section \ref{a-subset-of-the-set-of-rational-points-on-X-p-section}. Recall that $\kappa_{\ast}$ is an odd prime such that $\kappa_{\ast} \ne 3$. Hence it follows from $(\ref{the-definition-of-kappa-satisfying-conditions-Di-definition})$ that
\begin{align*}
v_3(\kappa) = v_3\left(3^{2m - 1}\kappa_{\ast}^r\right) = 2m - 1 < 2n - 1,
\end{align*}
which proves that $(B3)$ is true. In \textit{Step 1}, we have chosen $\kappa_{\ast}$ to be an odd prime such that $\kappa_{\ast} \ne p$. Since $\kappa = 3^{2m - 1}\kappa_{\ast}^r \not\equiv 0 \pmod{p}$, we deduce that $(B4)$ holds.

We now prove that $(B5)$ holds. Indeed, let $l$ be any odd prime such that $\gcd(l, 3) = 1$ and $l$ divides $\kappa$. By $(\ref{the-definition-of-kappa-satisfying-conditions-Di-definition})$, we deduce that $l = \kappa_{\ast}$. By $(\ref{the-linear-polynomial-Q(X)-defines-integers-kappa-ast-definition})$ and $(E2)$, we know that
\begin{align*}
\kappa_{\ast} = Q(X) \equiv \kappa_{\ast, 0} \equiv \dfrac{1}{3^{2s}} \pmod{p^{2v_p(n) + 1}}.
\end{align*}
This implies that $\kappa_{\ast} \equiv \dfrac{1}{3^{2s}} \pmod{p}$, and thus $\left(\dfrac{\kappa_{\ast}}{p}\right) = 1$. Since $p \equiv 1 \pmod{8}$, it follows from the quadratic reciprocity law that $\left(\dfrac{p}{\kappa_{\ast}}\right) = 1$. In other words, $p$ is a square in $\bQ_{\kappa_{\ast}}^{\times}$, and hence $(B5)$ is true.

We now verify $(D1)-(D7)$ in Theorem \ref{the-first-family-of-certain-generalized-Fermat-curves-violating-the-Hasse-principle-theorem}. Let $\D, \E, \F$ and $\G$ the sets of odd primes that were defined in Theorem \ref{the-first-family-of-certain-generalized-Fermat-curves-violating-the-Hasse-principle-theorem}. It is not difficult to see that $(D1), (D6)$ and $(D7)$ are true. Indeed, we see that $\F \subseteq \F^{\ast}$ and $\G \subseteq \G^{\ast}$, where the sets $\F^{\ast}$ and $\G^{\ast}$ were defined in the paragraph preceding \textit{Step 1}.

For any odd prime $l \in \F^{\ast}$, it follows from $(E3)$, $(\ref{the-linear-polynomial-Q(X)-defines-integers-kappa-ast-definition})$ and $(\ref{the-definition-of-kappa-satisfying-conditions-Di-definition})$ that
\begin{align*}
\kappa = 3^{2m - 1}\kappa_{\ast}^r = 3^{2m - 1}Q(X)^r &\equiv 3^{2m - 1}\kappa_{\ast, 0}^r \equiv 3^{2m - 1}\dfrac{1}{3^{2rs}} \\
                                                      &\equiv 3^{2m - 1}\dfrac{1}{3^{2m}} \; \; (\text{since $rs = m$}) \\
                                                      &\equiv \dfrac{1}{3} \pmod{l^{2v_l(n) + 1}}.
\end{align*}
Since $\F \subseteq \F^{\ast}$, we deduce that $\kappa \equiv \dfrac{1}{3} \pmod{l^{2v_l(n) + 1}}$ for any $l \in \F^{\ast}$, and thus $(D6)$ is true. Using the same arguments, one can show that $(D1)$ and $(D7)$ are true.

We now prove that $(D2), (D3)$ and $(D4)$ hold. By $(\ref{the-definition-of-Gamma-2-definition})$, $(E5)$, $(\ref{the-definitions-of-Theta-2-3-kappa-ast-definition})$, $(\ref{the-definition-of-Upsilon-definition})$, $(\ref{the-definition-of-R(Y)-generating-chi-equation})$ and $(\ref{the-definition-of-chi-satisfying-Di-equation})$, we see that
\begin{align*}
p\chi^2 = pR(Y)^2 \equiv p\chi_{\ast}^2 \equiv p\Theta_2^2 &\equiv p\Delta_2^2\Gamma_{2, 0}^2 3^6\kappa^6 \\
                                                           &\equiv p\Gamma_{2}^2 3^6\kappa^6 \; \; (\text{since} \; \Delta_2^2 = 1) \\
                                                           &\equiv p\dfrac{1}{p}3^6\kappa^6 \; \; (\text{by} \; (\ref{the-definition-of-Gamma-2-definition})) \\
                                                           &\equiv 3^6\kappa^6 \pmod{2^{2v_2(n) + 5}},
\end{align*}
and hence $(D2)$ is true.

By $(\ref{the-definition-of-Gamma-3-definition})$, $(E6)$, $(\ref{the-definitions-of-Theta-2-3-kappa-ast-definition})$, $(\ref{the-definition-of-Upsilon-definition})$, $(\ref{the-definition-of-R(Y)-generating-chi-equation})$ and $(\ref{the-definition-of-chi-satisfying-Di-equation})$, we see that
\begin{align*}
p\chi^2 = pR(Y)^2 \equiv p\chi_{\ast}^2 \equiv p\Theta_3^2 &\equiv p\Delta_3^2\Gamma_{3, 0}^2 \\
                                                           &\equiv p\Gamma_{3}^2  \; \; (\text{since} \; \Delta_3^2 = 1) \\
                                                           &\equiv p\left(-\dfrac{1}{p}\right)  \; \; (\text{by} \; (\ref{the-definition-of-Gamma-3-definition})) \\
                                                           &\equiv -1 \pmod{3^{2v_3(n) + 3}},
\end{align*}
and hence $(D3)$ is true.

We now show that $(D4)$ holds. By $(\ref{the-definition-of-kappa-satisfying-conditions-Di-definition})$ and since $\kappa_{\ast}$ is an odd prime such that $\kappa_{\ast} \ne 3$ and $\kappa_{\ast} \ne p$, we see that $\D = \{\kappa_{\ast}\}$, where $\D$ is the set of odd primes that was defined in Theorem \ref{the-first-family-of-certain-generalized-Fermat-curves-violating-the-Hasse-principle-theorem}. By $(E1)$ and $(\ref{the-linear-polynomial-Q(X)-defines-integers-kappa-ast-definition})$, we know that
\begin{align*}
\kappa_{\ast} = Q(X) \equiv \kappa_{\ast, 0} \equiv 1 \pmod{4}.
\end{align*}
By $(\ref{the-definition-of-Gamma-kappa-ast-definition})$, $(E7)$, $(\ref{the-definitions-of-Theta-2-3-kappa-ast-definition})$, $(\ref{the-definition-of-Upsilon-definition})$, $(\ref{the-definition-of-R(Y)-generating-chi-equation})$ and $(\ref{the-definition-of-chi-satisfying-Di-equation})$, we see that
\begin{align*}
p\chi^2 = pR(Y)^2 \equiv p\chi_{\ast}^2 \equiv p\Theta_{\kappa_{\ast}}^2 &\equiv p\Delta_{\ast}^2\Gamma_{\kappa_{\ast}, 0}^2 \\
                                                           &\equiv p\Gamma_{\kappa_{\ast}}^2  \; \; (\text{since} \; \Delta_{\ast}^2 = 1) \\
                                                           &\equiv p\left(-\dfrac{1}{p}\right)  \; \; (\text{by} \; (\ref{the-definition-of-Gamma-kappa-ast-definition})) \\
                                                           &\equiv -1 \pmod{\kappa_{\ast}^{2v_{\kappa_{\ast}}(n) + 1}},
\end{align*}
and hence $(D4)$ is true.

Finally, we prove that $(D5)$ holds. Since $\chi$ is an odd prime, we deduce that either $\E = \emptyset$ or $\E = \{\chi\}$. By $(E5), (E6), (E7), (E8)$ and $(\ref{the-definition-of-chi-satisfying-Di-equation})$, we deduce that
\begin{align}
\label{Congruence-chi-ast-not-0-mod-2}
\chi \equiv \chi_{\ast} \not\equiv 0 \pmod{2},
\end{align}
\begin{align}
\label{Congruence-chi-ast-not-0-mod-3}
\chi \equiv \chi_{\ast} \not\equiv 0 \pmod{3},
\end{align}
\begin{align}
\label{Congruence-chi-ast-not-0-mod-kappa-ast}
\chi &\equiv \chi_{\ast} \not\equiv 0 \pmod{\kappa_{\ast}}.
\end{align}
By $(E8)$ and $(\ref{the-definitions-of-Theta-l-for-each-prime-l-in-H-definition})$, we know that
\begin{align}
\label{Congruence-chi-ast-not-0-mod-l-in-H}
\chi \equiv \chi_{\ast} \equiv \Theta_l \not\equiv 0 \pmod{l}
\end{align}
for every odd prime $l \in \H$. Since
\begin{align*}
n = 2^{v_2(n)}\cdot 3^{v_3(n)}\cdot \kappa_{\ast}^{v_{\kappa_{\ast}}(n)}\prod_{l \in \H}l^{v_l(n)},
\end{align*}
it follows from the last congruences that $\gcd(\chi, n) = 1$, and thus $v_{\chi}(n) = 0$. Hence we see that in order to prove that $(D5)$ holds, it suffices to prove that there exists an integer $\zeta_{\chi}$ with $\zeta_{\chi} \not\equiv 0 \pmod{\chi}$ such that either $\kappa \equiv \dfrac{\zeta_{\chi}^{2n}}{3} \pmod{\chi}$ or $\kappa \equiv -\dfrac{\zeta_{\chi}^{2n}}{3} \pmod{\chi}$.

By $(\ref{the-definition-of-kappa-satisfying-conditions-Di-definition})$ and since $\chi \not\equiv 0 \pmod{3}$ and $\chi \not\equiv 0 \pmod{\kappa_{\ast}}$, we see that $\gcd(\chi, 3\kappa) = 1$. By $(E5)$, $(\ref{the-congruence-of-Theta-2-mod-4-equation})$ and $(\ref{the-definition-of-chi-satisfying-Di-equation})$, we deduce that
\begin{align*}
\chi = R(Y) \equiv \chi_{\ast} \equiv \Theta_2 \equiv 3 \pmod{4}.
\end{align*}
We contend that that there is an integer $\rho \in \{\pm 1\}$ such that $\rho 3\kappa$ is a square in $\bF_{\chi}^{\times}$. Indeed, if $3\kappa$ is a square in $\bF_{\chi}^{\times}$, then we let $\rho = 1$. Assume that $3\kappa$ is not a square in $\bF_{\chi}^{\times}$, i.e., $\left(\dfrac{3\kappa}{\chi}\right) = -1$. Since $\chi \equiv 3 \pmod{4}$, we know that $-1$ is a quadratic non-residue in $\bF_{\chi}^{\times}$. Hence it follows that
\begin{align*}
\left(\dfrac{-3\kappa}{\chi}\right) = \left(\dfrac{3\kappa}{\chi}\right)\left(\dfrac{-1}{\chi}\right) = 1,
\end{align*}
which proves that $\rho3\kappa$ is a square in $\bF_{\chi}^{\times}$, where $\rho = -1$. For the rest of the proof, fix an integer $\rho \in \{\pm 1\}$ such that $\rho 3\kappa$ is a square in $\bF_{\chi}^{\times}$.

Since $\chi \equiv 3 \pmod{4}$, using Lemma \ref{every-square-is-a-2-power-in-the-finite-field-F-l-with-l-congruent-to-3-mod-4-lemma} with $(l, x)$ replaced by $(\chi, \rho3\kappa)$, we deduce that $\rho3\kappa$ is a $2^{v_2(n) + 1}$-th power in $\bF_{\chi}^{\times}$. In other words, there is an integer $\tau$ such that $\tau \not\equiv 0 \pmod{\chi}$ and
\begin{align}
\label{tau-raised-to-2-power-equals-rho-3-kappa-modulo-chi-equation}
\tau^{2^{v_2(n) + 1}} \equiv \rho3\kappa \pmod{\chi}.
\end{align}

We prove that $\gcd(\chi_{\ast} - 1, n_{\ast}) = 1$, where $n_{\ast}$ is defined by $(\ref{the-definition-of-n-ast-definition})$. Indeed, by $(E6)$ and $(\ref{the-congruence-of-Theta-3-mod-3-equation})$, we know that
\begin{align}
\label{chi-ast-minus-1-congruent-to-1-modulo-3-equation}
\chi_{\ast} - 1 \equiv \Theta_3 - 1 \equiv 1 \pmod{3}.
\end{align}
By $(E7)$ and $(\ref{the-congruence-of-Theta-kappa-ast-mod-kappa-ast-equation})$, we see that
\begin{align}
\label{chi-ast-minus-1-congruent-to-non-zero-modulo-kappa-ast-equation}
\chi_{\ast} - 1 \equiv \Theta_{\kappa_{\ast}} - 1 \not\equiv 0 \pmod{\kappa_{\ast}}.
\end{align}
Recall from $(E8)$ and $(\ref{the-definitions-of-Theta-l-for-each-prime-l-in-H-definition})$ that for each prime $l \in \H$, we have that
\begin{align}
\label{chi-ast-minus-1-congruent-to-non-zero-modulo-l-with-l-in-H-equation}
\chi_{\ast} - 1 \equiv \Theta_l - 1 \not\equiv 0 \pmod{l}.
\end{align}
Since
\begin{align*}
n_{\ast} = \dfrac{n}{2^{v_2(n)}} = 3^{v_3(n)}\cdot \kappa_{\ast}^{v_{\kappa_{\ast}}(n)}\prod_{l \in \H}l^{v_l(n)},
\end{align*}
it follows from $(\ref{chi-ast-minus-1-congruent-to-1-modulo-3-equation})$, $(\ref{chi-ast-minus-1-congruent-to-non-zero-modulo-kappa-ast-equation})$ and $(\ref{chi-ast-minus-1-congruent-to-non-zero-modulo-l-with-l-in-H-equation})$ that
\begin{align*}
\gcd(\chi_{\ast} - 1, n_{\ast}) = 1.
\end{align*}

By $(\ref{Congruence-chi-ast-not-0-mod-2})$, $(\ref{Congruence-chi-ast-not-0-mod-3})$, $(\ref{Congruence-chi-ast-not-0-mod-kappa-ast})$ and $(\ref{Congruence-chi-ast-not-0-mod-l-in-H})$ and since
\begin{align*}
n_{\ast} = \dfrac{n}{2^{v_2(n)}} = 3^{v_3(n)}\cdot \kappa_{\ast}^{v_{\kappa_{\ast}}(n)}\prod_{l \in \H}l^{v_l(n)},
\end{align*}
we see that $\gcd(\chi_{\ast}, n_{\ast}) = 1$, and hence it follows that
\begin{align*}
\gcd(\chi_{\ast}, n_{\ast}) = 1 = \gcd(\chi_{\ast} - 1, n_{\ast}) = 1.
\end{align*}
We recall from $(\ref{the-representation-of-the-prime-chi-in-terms-of-n-ast-and-chi-ast-equation})$ that
\begin{align*}
\chi = n_{\ast}\Sigma_{n_{\ast}} + \chi_{\ast}.
\end{align*}
Hence, using Lemma \ref{the-existence-of-primes-such-that-any-integer-is-s-th-power-lemma} with $(r, s, q, S)$ replaced by $(\chi_{\ast}, n_{\ast}, \chi, \Sigma_{n_{\ast}})$, we deduce that every integer is an $n_{\ast}$-th power in $\bF_{\chi}$. Since $\tau \not\equiv 0 \pmod{\chi}$, this implies that $\tau$ is an $n_{\ast}$-th power in $\bF_{\chi}^{\times}$. In other words, there exists an integer $\zeta_{\chi}$ such that $\zeta_{\chi} \not\equiv 0 \pmod{\chi}$ and
\begin{align}
\label{tau-modulo-chi-equals-zeta-chi-to-the-n-ast-power-equation}
\tau \equiv \zeta_{\chi}^{n_{\ast}} \pmod{\chi}.
\end{align}
By $(\ref{tau-raised-to-2-power-equals-rho-3-kappa-modulo-chi-equation})$, $(\ref{tau-modulo-chi-equals-zeta-chi-to-the-n-ast-power-equation})$ and since $n = 2^{v_2(n)}n_{\ast}$, we deduce that
\begin{align*}
\rho3\kappa &\equiv \tau^{2^{v_2(n) + 1}} \equiv \left(\zeta_{\chi}^{n_{\ast}}\right)^{2^{v_2(n) + 1}} \\
            &\equiv \left(\zeta_{\chi}\right)^{n_{\ast} 2^{v_2(n) + 1}} \\
            &\equiv \zeta_{\chi}^{2n} \pmod{\chi}.
\end{align*}
Since either $\rho = 1$ or $\rho = -1$, we see that $\rho^2 = 1$, and hence it follows that
\begin{align*}
\kappa \equiv \dfrac{\zeta_{\chi}^{2n}}{3\rho} \equiv \rho\dfrac{\zeta_{\chi}^{2n}}{3\rho^2} \equiv \rho\dfrac{\zeta_{\chi}^{2n}}{3} \pmod{\chi},
\end{align*}
which proves that $(D5)$ holds. Hence our contention follows.

\end{proof}

\begin{corollary}
\label{the-existence-of-the-first-family-of-the-generalized-Fermat-curves-violating-the-Hasse-principle-corollary}

Let $p$ be a prime such that $p \equiv 1 \pmod{8}$ and $p \equiv 2 \pmod{3}$. Let $n$ and $m$ be positive integers such that $n \ge 2$ and $1 \le m < n$. For any positive integer $r$ dividing $m$, there are infinitely many integers $\kappa$ and infinitely many integers $\chi$ satisfying the following five conditions:
\begin{itemize}

\item [(i)] $\kappa$ satisfies $(B3)-(B5)$ in Section \ref{a-subset-of-the-set-of-rational-points-on-X-p-section}.

\item [(ii)] $\kappa = 3^{2m - 1}\kappa_{\ast}^r$ for some odd prime $\kappa_{\ast}$ with $\kappa_{\ast} \ne 3$.

\item [(iii)] $\chi$ is an odd prime.

\item [(iv)] $(p, n, \kappa, \chi)$ satisfies $(D1)-(D7)$ in Theorem \ref{the-first-family-of-certain-generalized-Fermat-curves-violating-the-Hasse-principle-theorem}.

\item [(v)] let $\cF_{(\kappa, \chi)}^{(p, n, m, r)}$ be the generalized Fermat curve of signature $(12n, 12n, 12n)$ defined by
\begin{align}
\label{the-generalized_Fermat-curves-F-kappa-chi-p-n-m-r-equation}
\cF_{(\kappa, \chi)}^{(p, n, m, r)} : 3^6\kappa^6x^{12n} - y^{12n} - p\chi^2z^{12n} = 0.
\end{align}
Then $\cF_{(\kappa, \chi)}^{(p, n, m, r)}$ is a counterexample to the Hasse principle explained by the Brauer-Manin obstruction.

\end{itemize}

\end{corollary}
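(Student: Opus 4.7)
The plan is to package the two main technical results of this section into a single existence statement. First I would invoke Lemma \ref{infinitude-of-the-quadruples-p-n-kappa-chi-lemma} with the given data $(p, n, m, r)$. That lemma already furnishes infinitely many couples $(\kappa, \chi)$ of integers such that $\kappa = 3^{2m-1}\kappa_{\ast}^{r}$ for some odd prime $\kappa_{\ast} \ne 3$, $\chi$ is an odd prime, $\kappa$ satisfies $(B3)-(B5)$ of Section \ref{a-subset-of-the-set-of-rational-points-on-X-p-section}, and $(p, n, \kappa, \chi)$ satisfies $(D1)-(D7)$ of Theorem \ref{the-first-family-of-certain-generalized-Fermat-curves-violating-the-Hasse-principle-theorem}. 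This is precisely conditions $(i)-(iv)$ of the corollary.

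Next, for each such couple $(\kappa, \chi)$, I would define the generalized Fermat curve $\cF_{(\kappa,\chi)}^{(p, n, m, r)}$ by equation $(\ref{the-generalized_Fermat-curves-F-kappa-chi-p-n-m-r-equation})$. Since the input $(p, n, \kappa, \chi)$ satisfies all of the hypotheses of Theorem \ref{the-first-family-of-certain-generalized-Fermat-curves-violating-the-Hasse-principle-theorem}, that theorem applies verbatim and yields that $\cF_{(\kappa,\chi)}^{(p, n, m, r)}$ is a counterexample to the Hasse principle explained by the Brauer-Manin obstruction, which is condition $(v)$.

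Finally, to confirm that we obtain infinitely many distinct values of $\kappa$ and of $\chi$, I would recall from the construction in Lemma \ref{infinitude-of-the-quadruples-p-n-kappa-chi-lemma} that $\kappa_{\ast}$ is chosen as a prime value of the linear polynomial $Q(X)$ by Dirichlet's theorem, and $\chi$ is chosen as a prime value of the linear polynomial $R(Y)$ by Dirichlet's theorem. Distinct admissible values of $X$ produce distinct primes $\kappa_{\ast}$ and hence distinct integers $\kappa = 3^{2m-1}\kappa_{\ast}^{r}$; similarly, distinct admissible values of $Y$ produce distinct primes $\chi$. Infinitude of the admissible $X$ and $Y$ is already built into the statement of Lemma \ref{infinitude-of-the-quadruples-p-n-kappa-chi-lemma}.

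Since the entire technical burden, namely the Hensel-type local solvability analysis, the Brauer-Manin obstruction computation on the auxiliary generalized Mordell curve $\cD$, and the delicate congruence bookkeeping needed to simultaneously satisfy $(B3)-(B5)$ and $(D1)-(D7)$, was already discharged in Lemma \ref{azumaya-algebra-A-for-generalized-mordell-curve-C-lemma}, Theorem \ref{non-existence-of-rational-points-for-generalized-mordell-curves-using-rational-points-on-threefold-Xp-theorem}, Theorem \ref{the-first-family-of-certain-generalized-Fermat-curves-violating-the-Hasse-principle-theorem} and Lemma \ref{infinitude-of-the-quadruples-p-n-kappa-chi-lemma}, there is no genuine obstacle in this corollary; the proof is essentially a direct assembly of these preceding results. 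The only thing one has to mention carefully is that the infinitude is genuinely present on both sides (i.e., that infinitely many $\kappa$ arise, and for each such $\kappa$ infinitely many valid $\chi$ arise), which follows because the choices of $X$ and $Y$ in the proof of Lemma \ref{infinitude-of-the-quadruples-p-n-kappa-chi-lemma} are independent applications of Dirichlet's theorem.
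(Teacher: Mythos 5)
Your proposal is correct and follows essentially the same route as the paper: invoke Lemma \ref{infinitude-of-the-quadruples-p-n-kappa-chi-lemma} to obtain infinitely many couples $(\kappa, \chi)$ satisfying conditions $(i)$--$(iv)$, then apply Theorem \ref{the-first-family-of-certain-generalized-Fermat-curves-violating-the-Hasse-principle-theorem} to each such couple to conclude $(v)$. The additional remarks you make about how distinct choices of $X$ and $Y$ in the Dirichlet arguments produce distinct $\kappa$ and $\chi$ are a slight elaboration beyond what the paper records, but the argument is the same.
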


\begin{proof}

By Lemma \ref{infinitude-of-the-quadruples-p-n-kappa-chi-lemma}, we see that for any positive integer $r$ dividing $m$, there are infinitely many integers $\kappa$ and infinitely many integers $\chi)$ satisfying $(i), (ii), (iii)$ and $(iv)$ in Corollary \ref{the-existence-of-the-first-family-of-the-generalized-Fermat-curves-violating-the-Hasse-principle-corollary}. For any couple $(\kappa, \chi)$ satisfying $(i), (ii), (iii)$ and $(iv)$ in Corollary \ref{the-existence-of-the-first-family-of-the-generalized-Fermat-curves-violating-the-Hasse-principle-corollary}, applying Theorem \ref{the-first-family-of-certain-generalized-Fermat-curves-violating-the-Hasse-principle-theorem} for the curve $\cF_{(\kappa, \chi)}^{(p, n, m, r)}$, we deduce that $\cF_{(\kappa, \chi)}^{(p, n, m, r)}$ is a counterexample to the Hasse principle explained by the Brauer-Manin obstruction. Hence $(v)$ in Corollary \ref{the-existence-of-the-first-family-of-the-generalized-Fermat-curves-violating-the-Hasse-principle-corollary} holds, and thus our contention follows.

\end{proof}

\section{The descending chain condition on sequences of generalized Fermat curves}
\label{the-DCC-on-sequences-of-generalized-Fermat-curves-section}

In this section, we will study the descending chain condition on sequences of generalized Fermat curves. We will prove that there exist infinitely many sequences of generalized Fermat curves satisfying the descending chain condition in the sense of Definition \ref{the-descending-chain-condition-on-a-sequence-of-curves-definition}. We begin by proving the main lemma in this section.

\begin{lemma}
\label{for-2m-less-than-n-there-are-two-generalized-Fermat-curves-one-has-rational-points-but-the-other-one-has-no-rational-point-lemma}

Let $p$ be a prime such that $p \equiv 1 \pmod{8}$ and $p \equiv 2 \pmod{3}$. Let $n$ and $m$ be positive integers such that $2m < n$. Then there exist an infinite set $\I_{n, m}$ of integers and an infinite set $\J_{n, m}$ of integers such that for each integer $\kappa \in \I_{n, m}$ and each integer $\chi \in \J_{n, m}$, $\cF_{(\kappa, \chi)}^{(m)}$ contains at least four rational points whereas $\cF_{(\kappa, \chi)}^{(n)}$ is a counterexample to the Hasse principle explained by the Brauer-Manin obstruction, where for each integer $\kappa \in \I_{n, m}$ and each integer $\chi \in \J_{n, m}$, $\cF_{(\kappa, \chi)}^{(m)}$ and $\cF_{(\kappa, \chi)}^{(n)}$ are the generalized Fermat curves of signatures $(12m, 12m, 12m)$ and $(12n, 12n, 12n)$, respectively and defined by
\begin{align}
\label{the-generalized-Fermat-curve-F-kappa-chi-m-equation}
\cF_{(\kappa, \chi)}^{(m)} : 3^6\kappa^6x^{12m} - y^{12m} - p\chi^2z^{12m} = 0
\end{align}
and
\begin{align}
\label{the-generalized-Fermat-curve-F-kappa-chi-n-equation}
\cF_{(\kappa, \chi)}^{(n)} : 3^6\kappa^6x^{12n} - y^{12n} - p\chi^2z^{12n} = 0.
\end{align}

\end{lemma}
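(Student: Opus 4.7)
The plan is to imitate the proof of Lemma~\ref{the-existence-of-the-set-C-n-m-such-that-kappa-in-C-n-m-generates-the-sequence-satisfying-the-DCC-lemma}, with the Fermat-curve infinitude statement (Corollary~\ref{the-existence-of-the-first-family-of-the-generalized-Fermat-curves-violating-the-Hasse-principle-corollary}) playing the role that was played there by the Mordell-curve infinitude statement (Lemma~\ref{infinitude-of-the-triples-p-n-kappa-lemma}). The hypothesis $2m<n$ makes $2m$ a legitimate value of the parameter called ``$m$'' in that corollary. Concretely, I would apply Corollary~\ref{the-existence-of-the-first-family-of-the-generalized-Fermat-curves-violating-the-Hasse-principle-corollary} with the pair $(m,r)$ there replaced by $(2m,2m)$: this yields infinitely many integers $\kappa$ and infinitely many integers $\chi$ for which $\kappa=3^{4m-1}\kappa_{\ast}^{2m}$ with some odd prime $\kappa_{\ast}\ne 3$, $\chi$ is an odd prime, $\kappa$ satisfies $(B3)$--$(B5)$ of Section~\ref{a-subset-of-the-set-of-rational-points-on-X-p-section}, and the quadruple $(p,n,\kappa,\chi)$ satisfies $(D1)$--$(D7)$ of Theorem~\ref{the-first-family-of-certain-generalized-Fermat-curves-violating-the-Hasse-principle-theorem}. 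I would take $\I_{n,m}$ to be the infinite set of all $\kappa$ produced in this way, and $\J_{n,m}$ the infinite set of all $\chi$ produced in this way.

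For the large-degree curve $\cF_{(\kappa,\chi)}^{(n)}$, the verification is immediate: by construction $(p,n,\kappa,\chi)$ satisfies all the hypotheses of Theorem~\ref{the-first-family-of-certain-generalized-Fermat-curves-violating-the-Hasse-principle-theorem} for every $\kappa\in\I_{n,m}$ and $\chi\in\J_{n,m}$, and that theorem directly delivers the desired Brauer-Manin obstruction, so $\cF_{(\kappa,\chi)}^{(n)}$ is a counterexample to the Hasse principle explained by the Brauer-Manin obstruction.

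For the small-degree curve $\cF_{(\kappa,\chi)}^{(m)}$, the key structural identity is $3^6\kappa^6=3^6\bigl(3^{4m-1}\kappa_{\ast}^{2m}\bigr)^6=(9\kappa_{\ast})^{12m}$, which is purely a consequence of the special shape of $\kappa$ and is in particular independent of $\chi$. The defining equation of $\cF_{(\kappa,\chi)}^{(m)}$ thereby rewrites as $(9\kappa_{\ast} x)^{12m}-y^{12m}-p\chi^2 z^{12m}=0$. Restricting to the divisor $z=0$ reduces this to $(9\kappa_{\ast} x)^{12m}=y^{12m}$, whose $\bQ$-solutions, using that $12m$ is even, are $y=\pm 9\kappa_{\ast}x$, and I would read off from the resulting sign ambiguities the four rational points $(\pm 1:\pm 9\kappa_{\ast}:0)$ of $\cF_{(\kappa,\chi)}^{(m)}(\bQ)$. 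The principal piece of bookkeeping is to verify, as in the infinitude argument of Lemma~\ref{infinitude-of-the-quadruples-p-n-kappa-chi-lemma}, that varying the auxiliary prime $\kappa_{\ast}$ (via Dirichlet's theorem on arithmetic progressions in that proof) produces genuinely infinitely many distinct values of $\kappa$, and likewise that the prime $\chi$ can be chosen to vary through an infinite set; I do not anticipate this as a serious obstacle since it is carried out explicitly in the earlier lemma and the arguments transfer verbatim.
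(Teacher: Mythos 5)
Your proposal matches the paper's proof almost exactly: both define $\I_{n,m}$ and $\J_{n,m}$ via the infinitude machinery of Lemma~\ref{infinitude-of-the-quadruples-p-n-kappa-chi-lemma} with its parameters $(m,r)$ set to $(2m,2m)$, both exploit the rewriting $3^6\kappa^6=(3^2\kappa_{\ast})^{12m}$ to exhibit the four rational points $(\pm 1 : \pm 9\kappa_{\ast}: 0)$ on $\cF^{(m)}_{(\kappa,\chi)}$, and both invoke Theorem~\ref{the-first-family-of-certain-generalized-Fermat-curves-violating-the-Hasse-principle-theorem} for $\cF^{(n)}_{(\kappa,\chi)}$. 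The only cosmetic difference is that you route through Corollary~\ref{the-existence-of-the-first-family-of-the-generalized-Fermat-curves-violating-the-Hasse-principle-corollary} rather than citing Lemma~\ref{infinitude-of-the-quadruples-p-n-kappa-chi-lemma} directly; this changes nothing of substance.
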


\begin{proof}

Let $\I_{n, m}$ be the set of nonzero integers $\kappa$ and $\J_{n, m}$ be the set of integers $\chi$ such that the following are true:
\begin{itemize}

\item [(i)] $\kappa$ satisfies $(B3)-(B5)$ in Section \ref{a-subset-of-the-set-of-rational-points-on-X-p-section},

\item [(ii)] $\kappa = 3^{4m - 1}\kappa_{\ast}^{2m}$ for some odd prime $\kappa_{\ast}$ with $\kappa_{\ast} \ne 3$,

\item [(iii)] $\chi$ is an odd prime, and

\item [(iv)] $(p, n, \kappa, \chi)$ satisfies $(D1)-(D7)$ in Theorem \ref{the-first-family-of-certain-generalized-Fermat-curves-violating-the-Hasse-principle-theorem}.

\end{itemize}
Using Lemma \ref{infinitude-of-the-quadruples-p-n-kappa-chi-lemma} with $(m, r)$ replaced by $(2m, 2m)$, we deduce that $\I_{n, m}$ and $\J_{n, m}$ are of infinite cardinality. Let $\kappa$ be a nonzero integer in $\I_{n, m}$, and let $\chi$ be an integer in $\J_{n, m}$. Let $\cF_{(\kappa, \chi)}^{(m)}$ and $\cF_{(\kappa, \chi)}^{(n)}$ be the generalized Fermat curves defined by $(\ref{the-generalized-Fermat-curve-F-kappa-chi-m-equation})$ and $(\ref{the-generalized-Fermat-curve-F-kappa-chi-n-equation})$, respectively. Since $\kappa \in \I_{n, m}$, there exists an odd prime $\kappa_{\ast}$ such that $\kappa_{\ast} \ne 3$ and $\kappa = 3^{4m - 1}\kappa_{\ast}^{2m}$. The defining equation of $\cF_{(\kappa, \chi)}^{(m)}$ can be written in the form
\begin{align*}
\cF_{(\kappa, \chi)}^{(m)} : \left(3^{2}\kappa_{\ast}x\right)^{12m} - y^{12m} - p\chi^2z^{12m} = 0.
\end{align*}
Hence we see that the points $(x, y, z) = \left(\pm \dfrac{1}{ 3^{2}\kappa_{\ast}}, \pm 1, 0\right)$ belong to $\cF_{(\kappa, \chi)}^{(m)}(\bQ)$, and thus $\cF_{(\kappa, \chi)}^{(m)}$ has at least four $\bQ$-rational points.

Since $\kappa \in \I_{n, m}$ and $\chi \in \J_{n, m}$, we know that $\kappa$ satisfies $(B3)-(B5)$ in Section \ref{a-subset-of-the-set-of-rational-points-on-X-p-section} and the quadruple $(p, n, \kappa, \chi)$ satisfies $(D1)-(D7)$ in Theorem \ref{the-first-family-of-certain-generalized-Fermat-curves-violating-the-Hasse-principle-theorem}. By Theorem \ref{the-first-family-of-certain-generalized-Fermat-curves-violating-the-Hasse-principle-theorem}, we deduce that $\cF_{(\kappa, \chi)}^{(n)}$ is a counterexample to the Hasse principle explained by the Brauer-Manin obstruction. Thus our contention follows.

\end{proof}

\begin{remark}
\label{the-optimal-bound-for-the-signature-of-the-first-family-of-generalized-Fermat-curves-remark}

Let $(p, n, \kappa, \chi)$ be the quadruple satisfying the conditions in Theorem \ref{the-first-family-of-certain-generalized-Fermat-curves-violating-the-Hasse-principle-theorem}, and assume further that $n \ge 3$. Let $\cF$ be the generalized Fermat curve defined as in Theorem \ref{the-first-family-of-certain-generalized-Fermat-curves-violating-the-Hasse-principle-theorem}. By Lemma \ref{for-2m-less-than-n-there-are-two-generalized-Fermat-curves-one-has-rational-points-but-the-other-one-has-no-rational-point-lemma} and using the same arguments as in Remark \ref{Remark-the-degree-12n-is-optimal-for-generalized-Mordell-curves-D}, we deduce that the signature $(12n, 12n, 12n)$ of $\cF$ is \textit{optimal} in the sense that in Theorem \ref{the-first-family-of-certain-generalized-Fermat-curves-violating-the-Hasse-principle-theorem}, one can not replace $n$ by a positive divisor $m$ of $n$ with $m \ne n$.

\end{remark}

We now prove the main result in this section, which says that there are infinitely many sequences of generalized Fermat curves that satisfy the DCC of arbitrary length.

\begin{corollary}

Let $h$ be a positive integer. Then there exist infinitely many sequences of generalized Fermat curves that satisfy the DCC of length $h$ in the sense of Definition \ref{the-descending-chain-condition-on-a-sequence-of-curves-definition}.

\end{corollary}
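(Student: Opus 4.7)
The plan is to mirror the argument for generalized Mordell curves in Corollary~\ref{the-existence-of-infinitude-of-sequences-of-curves-satisfying-DCC-corollary}, using Lemma~\ref{for-2m-less-than-n-there-are-two-generalized-Fermat-curves-one-has-rational-points-but-the-other-one-has-no-rational-point-lemma} as the Fermat replacement for Lemma~\ref{the-existence-of-the-set-C-n-m-such-that-kappa-in-C-n-m-generates-the-sequence-satisfying-the-DCC-lemma}. First I would fix a prime $p$ with $p \equiv 1 \pmod{8}$ and $p \equiv 2 \pmod{3}$, together with integers $n_0 \ge 2$, $n_1 \ge 1$, and a positive integer $\epsilon$ chosen so that $n_0^{\epsilon} > 2$. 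Setting $m := n_0^{h} n_1$ and $n := n_0^{h+\epsilon} n_1$ forces $2m < n$, and Lemma~\ref{for-2m-less-than-n-there-are-two-generalized-Fermat-curves-one-has-rational-points-but-the-other-one-has-no-rational-point-lemma} applied to the triple $(p, n, m)$ then produces infinite parameter sets $\I_{n, m}$ and $\J_{n, m}$ such that for every couple $(\kappa, \chi) \in \I_{n, m} \times \J_{n, m}$, the curve $\cF_{(\kappa, \chi)}^{(m)}$ has a rational point while $\cF_{(\kappa, \chi)}^{(n)}$ satisfies $\cF_{(\kappa, \chi)}^{(n)}(\bA_{\bQ})^{\mathrm{Br}} = \emptyset$.

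For each such couple $(\kappa, \chi)$, I would next assemble the sequence of generalized Fermat curves
\[
\cX_i^{(\kappa, \chi)} : 3^6 \kappa^6 x^{d_i} - y^{d_i} - p \chi^2 z^{d_i} = 0,
\]
where
\[
d_i := \begin{cases} 12 n_0^{i+1} n_1 & \text{if } 1 \le i \le h-1, \\ 12 n_0^{i+\epsilon} n_1 & \text{if } i \ge h, \end{cases}
\]
so that $\cX_{h-1}^{(\kappa, \chi)} = \cF_{(\kappa, \chi)}^{(m)}$ and $\cX_h^{(\kappa, \chi)} = \cF_{(\kappa, \chi)}^{(n)}$. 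The connecting $\bQ$-morphisms $\psi_i^{(\kappa, \chi)} : \cX_{i+1}^{(\kappa, \chi)} \to \cX_i^{(\kappa, \chi)}$ would be given by $(x : y : z) \mapsto (x^{n_0} : y^{n_0} : z^{n_0})$ for every $i \ne h-1$, and by $(x : y : z) \mapsto (x^{n_0^{\epsilon}} : y^{n_0^{\epsilon}} : z^{n_0^{\epsilon}})$ for $i = h-1$; these really do map curve to curve because the exponent ratio $d_{i+1}/d_i$ equals $n_0$ in the first case and $n_0^{\epsilon}$ in the second, and each $\cX_i^{(\kappa, \chi)}$ is a smooth geometrically irreducible plane curve, since we are in characteristic zero and all three coefficients are nonzero.

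Verifying the three clauses of Definition~\ref{the-descending-chain-condition-on-a-sequence-of-curves-definition} would then be routine. Clause (DCC2) at the index $i = h$ is exactly the Brauer--Manin conclusion of Lemma~\ref{for-2m-less-than-n-there-are-two-generalized-Fermat-curves-one-has-rational-points-but-the-other-one-has-no-rational-point-lemma}. For clause (DCC1), pushing a rational point on $\cX_{h-1}^{(\kappa, \chi)}$ through the composition $\psi_i^{(\kappa, \chi)} \circ \psi_{i+1}^{(\kappa, \chi)} \circ \cdots \circ \psi_{h-2}^{(\kappa, \chi)}$ yields a rational point on $\cX_i^{(\kappa, \chi)}$ for each $1 \le i \le h-2$. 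Clause (DCC3) follows from the plane-curve genus formula $g_i = (d_i - 1)(d_i - 2)/2$ combined with the strict monotonicity of $(d_i)_{i \ge 1}$; the choice $n_0^{\epsilon} > 2$ is precisely what guarantees the strict inequality $d_h > d_{h-1}$ at the jump between indices $h-1$ and $h$. Finally, the infinitude of the resulting collection of sequences follows from the infinite cardinality of $\I_{n, m}$ together with a $\kappa_{\ast}$-adic valuation argument identical to the one at the end of the proof of Corollary~\ref{infinitude-of-non-isomorphic-generalized-Mordell-curves-violating-the-Hasse-principle-corollary}, which shows that distinct $\kappa \in \I_{n, m}$ yield pairwise non-isomorphic starting curves $\cX_1^{(\kappa, \chi)}$ and hence pairwise distinct sequences. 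I do not anticipate any serious obstacle: the construction is dictated by the Mordell analog, and every non-trivial input has already been established.
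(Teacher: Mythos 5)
Your proof is correct and reaches the same conclusion via the same key ingredient, namely Lemma~\ref{for-2m-less-than-n-there-are-two-generalized-Fermat-curves-one-has-rational-points-but-the-other-one-has-no-rational-point-lemma}, but the bookkeeping differs slightly from the paper's. You imported the $\epsilon$-shifted degree schedule from the Mordell version (Corollary~\ref{the-existence-of-infinitude-of-sequences-of-curves-satisfying-DCC-corollary}): $n_0 \ge 2$, $n := n_0^{h+\epsilon}n_1$, $m := n_0^h n_1$, with a ``jump'' morphism $(x:y:z) \mapsto (x^{n_0^\epsilon}:y^{n_0^\epsilon}:z^{n_0^\epsilon})$ at $i = h-1$ and $(x:y:z) \mapsto (x^{n_0}:y^{n_0}:z^{n_0})$ elsewhere. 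The paper's proof of this Fermat corollary instead fixes $n_0 \ge 3$, takes $n := n_0^h n_1$ and $m := n_0^{h-1} n_1$ (so $2m < n$ is automatic without any $\epsilon$), and uses the uniform exponent schedule $d_i = 12n_0^i n_1$ with the single uniform morphism $(x:y:z)\mapsto(x^{n_0}:y^{n_0}:z^{n_0})$ at every index; this avoids the case split in the morphism definition. Both are valid, and the trade-off is exactly what you would expect: the $\epsilon$-construction buys you $n_0 = 2$ at the cost of a non-uniform map, while the paper's cleaner construction simply takes $n_0 \ge 3$. You also go beyond the paper by sketching a $\kappa_\ast$-adic valuation argument for why distinct $\kappa$ yield non-isomorphic starting curves; the paper just cites the infinitude of $\I_{n,m}$ and $\J_{n,m}$ and stops. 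That extra step is reasonable to want, though note the Mordell valuation argument you cite does not transfer verbatim to plane Fermat curves (the isomorphism criterion is not simply that a single coefficient ratio be a $d$-th power), so if you wished to make it rigorous you would need to say a bit more about isomorphisms between diagonal plane curves rather than calling it ``identical.''
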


\begin{proof}

Let $p$ be a prime such that $p \equiv 1 \pmod{8}$ and $p \equiv 2 \pmod{3}$. Let $n_0$ and $n_1$ be integers such that $n_0 \ge 3$ and $n_1 \ge 1$. Set
\begin{align*}
n := n_0^hn_1, \\
m := n_0^{h - 1}n_1.
\end{align*}
Since $n_0 \ge 3$, we see that
\begin{align*}
2m = 2n_0^{h - 1}n_1 < n_0n_0^{h - 1}n_1 = n.
\end{align*}
Applying Lemma \ref{for-2m-less-than-n-there-are-two-generalized-Fermat-curves-one-has-rational-points-but-the-other-one-has-no-rational-point-lemma} for the triple $(p, n, m)$, we deduce that there exist an infinite set $\I_{n, m}$ of integers and an infinite set $\J_{n, m}$ of integers such that for each integer $\kappa \in \I_{n, m}$ and each integer $\chi \in \J_{n, m}$, $\cF^{(\kappa, \chi)}_{h - 1}$ contains at least four rational points whereas $\cF^{(\kappa, \chi)}_{h}$ is a counterexample to the Hasse principle explained by the Brauer-Manin obstruction, where for each integer $\kappa \in \I_{n, m}$ and each integer $\chi \in \J_{n, m}$, $\cF^{(\kappa, \chi)}_{h - 1}$ and $\cF^{(\kappa, \chi)}_{h}$ are the generalized Fermat curves defined by
\begin{align*}
\cF^{(\kappa, \chi)}_{h - 1} : 3^6\kappa^6x^{12n_0^{h - 1}n_1} - y^{12n_0^{h - 1}n_1} - p\chi^2z^{12n_0^{h - 1}n_1} = 0
\end{align*}
and
\begin{align*}
\cF^{(\kappa, \chi)}_{h} : 3^6\kappa^6x^{12n_0^hn_1} - y^{12n_0^hn_1} - p\chi^2z^{12n_0^hn_1} = 0,
\end{align*}
respectively.

Take any nonzero integer $\kappa \in \I_{n, m}$, and let $\chi$ be any nonzero integer in $\J_{n, m}$. For each integer $i \ge 1$, let $\cF^{(\kappa, \chi)}_{i}$ be the generalized Fermat curve defined by
\begin{align*}
\cF^{(\kappa, \chi)}_{i} : 3^6\kappa^6x^{12n_0^in_1} - y^{12n_0^in_1} - p\chi^2z^{12n_0^in_1} = 0,
\end{align*}
and for each integer $i \ge 1$, let $\psi_{i}^{(\kappa, \chi)} : \cF^{(\kappa, \chi)}_{i + 1} \rightarrow \cF^{(\kappa, \chi)}_{i}$ be the $\bQ$-morphism of curves defined by
\begin{align*}
\psi_{i}^{(\kappa, \chi)} : \cF^{(\kappa, \chi)}_{i + 1} &\rightarrow \cF^{(\kappa, \chi)}_{i} \\
(x, y, z) &\mapsto (x^{n_0}, y^{n_0}, z^{n_0}).
\end{align*}
We contend that the sequence $\left(\cF^{(\kappa, \chi)}_{i}, \psi_{i}^{(\kappa, \chi)} \right)_{i \ge 1}$ satisfies the DCC of length $h$ in the sense of Definition \ref{the-descending-chain-condition-on-a-sequence-of-curves-definition}. Indeed, since $\kappa \in \I_{n, m}$ and $\chi \in \J_{n, m}$, we know that $\cF^{(\kappa, \chi)}_{h - 1}$ contains at least four rational points whereas $\cF^{(\kappa, \chi)}_{h}$ is a counterexample to the Hasse principle explained by the Brauer-Manin obstruction. Thus (DCC2) in Definition \ref{the-descending-chain-condition-on-a-sequence-of-curves-definition} holds.

Let $(x_0, y_0, z_0)$ be any rational point in $\cF^{(\kappa, \chi)}_{h - 1}(\bQ)$. We see that for each integer $1 \le i \le h - 2$, the point $(x, y, z) = (x_0^{n_0^{h - 1 - i}}, y_0^{n_0^{h - 1- i}}, z_0^{n_0^{h - 1 - i}})$ belongs to $\cF_{i}^{(\kappa, \chi)}(\bQ)$. Hence (DCC1) in Definition \ref{the-descending-chain-condition-on-a-sequence-of-curves-definition} holds. For each integer $i \ge 1$, let $g_i^{(\kappa, \chi)}$ denote the genus of the curve $\cF_{i}^{(\kappa, \chi)}$. We see that
\begin{align*}
g_i^{(\kappa, \chi)} = (12n_0^in_1 - 1)(6n_0^in_1 - 1)
\end{align*}
for each $i \ge 1$. Hence it follows that $g_i^{(\kappa, \chi)} < g_j^{(\kappa, \chi)}$ for any positive integers $i, j$ with $1 \le i < j$, and thus (DCC3) holds. Therefore $\left(\cF^{(\kappa, \chi)}_{i}, \psi_{i}^{(\kappa, \chi)} \right)_{i \ge 1}$ satisfies the DCC of length $h$ in the sense of Definition \ref{the-descending-chain-condition-on-a-sequence-of-curves-definition}. Since $\I_{n, m}$ and $\J_{n, m}$ are of infinite cardinality, our contention follows.

\end{proof}

\begin{remark}

Note that there exist infinitely many sequences of generalized Fermat curves that do not satisfy the DCC of any length. Indeed, take any triple $(A, B, C)$ of nonzero integers such that $A + B + C = 0$. Let $n$ be an integer such that $n \ge 2$. For each integer $i \ge 1$, let $\cF_{i}$ be the generalized Fermat curve defined by
\begin{align*}
\cF_i : Ax^{n^i} + By^{n^i} + Cz^{n^i} = 0,
\end{align*}
and for each integer $i \ge 1$, let $\psi_i : \cF_{i + 1} \rightarrow \cF_{i}$ be the morphism of curves defined by
\begin{align*}
\psi_i : \cF_{i + 1} &\rightarrow \cF_{i} \\
(x, y, z) &\mapsto (x^{n}, y^{n}, z^{n}).
\end{align*}
Since $A + B + C = 0$, we deduce that for each $i \ge 1$, the curve $\cF_i$ contains the rational point $(x, y, z) = (1, 1, 1)$, and hence $\cF_i(\bQ) \ne 0$ for each integer $i \ge 1$. Thus the sequence $\left(\cF_{i}, \psi_{i} \right)_{i \ge 1}$ does not satisfy the DCC.

\end{remark}

\section{Epilogue}
\label{the-second-subset-of-the-set-of-rational-points-on-X-p-section}

In Sections \ref{certain-generalized-Mordell-curves-violate-the-Hasse-principle-section} and \ref{certain-generalized-Fermat-curves-violate-the-Hasse-principle-section}, for each positive integer $n \ge 2$, we have constructed certain families of generalized Mordell curves of degree $12n$ and certain families of generalized Fermat curves of signature $(12n, 12n, 12n)$ arising from rational points of the subset of $\cX_p(\bQ)$ defined by $(\ref{the-first-parameterization-of-septuples-(A-B-C-D-E-F-G)-satisfying-Hypothesis-FM-equations})$ in Section \ref{a-subset-of-the-set-of-rational-points-on-X-p-section} that are counterexamples to the Hasse principle explained by the Brauer-Manin obstruction. Because of condition $(B3)$ in Section \ref{a-subset-of-the-set-of-rational-points-on-X-p-section}, it is impossible to construct generalized Mordell curves of degree $12$ and certain families of generalized Fermat curves of signature $(12, 12, 12)$ that are counterexamples to the Hasse principle; in other words, we rule out the case $n = 1$ in Theorems \ref{the-first-infinite-family-of-generalized-Mordell-curves-violates-the-Hasse-principle-theorem} and \ref{the-first-family-of-certain-generalized-Fermat-curves-violating-the-Hasse-principle-theorem}. In this section, we will introduce another subset of the set of rational points on $\cX_p$, which allows us to include the case $n = 1$ in Theorems \ref{the-first-infinite-family-of-generalized-Mordell-curves-violates-the-Hasse-principle-theorem} and \ref{the-first-family-of-certain-generalized-Fermat-curves-violating-the-Hasse-principle-theorem}. Using this subset of the set of rational points on $\cX_p$ and assuming Schinzel's Hypothesis H, we will prove that for each prime $p$ such that $p \equiv 1 \pmod{8}$ and $p \equiv 2 \pmod{3}$ and each integer $n \ge 1$, there should be infinitely many septuples $(A, B, C, D, E, F, G)$ of integers that satisfy Hypothesis FM with respect to the couple $(p, n)$ in the sense of Definition \ref{septuple-satisfies-hypothesis-FM-definition}. Using these septuples and repeating the same arguments as in Sections \ref{certain-generalized-Mordell-curves-violate-the-Hasse-principle-section} and \ref{certain-generalized-Fermat-curves-violate-the-Hasse-principle-section}, one can show that for each $n \ge 1$, there should exist families of generalized Mordell curves of degree $12n$ and families of generalized Fermat curves of signature $(12n, 12n, 12n)$ that are counterexamples to the Hasse principle explained by the Brauer-Manin obstruction.

In this section, we restrict ourselves to describing a new subset of $\cX_p(\bQ)$ for each prime $p$ that allows us to produce new families of generalized Mordell curves and new families of generalized Fermat curves that are counterexamples to the Hasse principle, but not proceeding to describe these families of curves. The interested reader can use similar arguments as in Sections \ref{certain-generalized-Mordell-curves-violate-the-Hasse-principle-section} and \ref{certain-generalized-Fermat-curves-violate-the-Hasse-principle-section} to construct new families of generalized Mordell curves and new families of generalized Fermat curves arising from new rational points in $\cX_p(\bQ)$ for each prime $p$ that are counterexamples to the Hasse principle explained by the Brauer-Manin obstruction. We begin by introducing a new subset of $\cX_p(\bQ)$ for each prime $p$ such that $p \equiv 1 \pmod{8}$ and $p \equiv 2 \pmod{3}$.

Let $p$ be an odd prime such that $p \ne 3$. Let $\lambda$ and $\gamma$ be nonzero \textit{odd integers} such that
\begin{align*}
\gcd(\lambda, 3\gamma) = \gcd(p, 3\gamma) = \gcd(p, \lambda) = 1.
\end{align*}
Since $\gcd(p\lambda^2, 9\gamma^2) = 1$, there exist nonzero integers $\epsilon_0$ and $\delta_0$ such that
\begin{align*}
p\lambda^2\epsilon_0 + 9\gamma^2\delta_0 = 1.
\end{align*}
For integers $\mu, t_0, F_0 \in \bZ$, we define
\begin{align}
\label{the-second-parametrization-of-septuples-A-B-C-D-E-F-G-satisfying-Hypothesis-FM-equations}
\begin{cases}
A &:= \dfrac{p\lambda^2 - 9\gamma^2}{2} \\
B &:= 2pF_0^2\left(\delta_0 - \epsilon_0 - \mu (p\lambda^2 + 9\gamma^2)\right) + (p\lambda^2 + 9\gamma^2)t_0F_0 \\
C &:= 2pF_0^2\left(\delta_0 + \epsilon_0 - \mu (p\lambda^2 - 9\gamma^2)\right) + (p\lambda^2 - 9\gamma^2)t_0F_0 \\
D &= \dfrac{p\lambda^2 + 9\gamma^2}{2} \\
E &:= F_0(2pF_0(\epsilon_0 + 9\mu\gamma^2) - 9\gamma^2t_0)(2F_0(\delta_0 - p\mu\lambda^2) + \lambda^2t_0) \\
F &:= 2F_0 \\
G &= 3\lambda \gamma.
\end{cases}
\end{align}
Note that $B$ and $C$ can be written in the following form
\begin{align}
\label{the-representation-of-B-and-C-in-terms-of-D-and-A-in-the-second-parametrization-of-the-septuples-equations}
\begin{cases}
B &:= 2pF_0^2\left(\delta_0 - \epsilon_0 - 2\mu D\right) + 2Dt_0F_0 \\
C &:= 2pF_0^2\left(\delta_0 + \epsilon_0 - 2\mu A\right) + 2At_0F_0.
\end{cases}
\end{align}

It is not difficult to verify that the point $\cP := (a : b : c : d : e : f : g) = (A : B : C : D : E : F : G)$ belongs to $\cX_p(\bQ)$; hence, the septuple $(A, B, C, D, E, F, G)$ satisfies $(A1)$ in Definition \ref{septuple-satisfies-hypothesis-FM-definition}. We see that $(\ref{the-second-parametrization-of-septuples-A-B-C-D-E-F-G-satisfying-Hypothesis-FM-equations})$ defines the parametrization of a subset of $\cX_p(\bQ)$ by parameters $\lambda, \gamma, \mu, t_0$ and $F_0$. Using this parametrization, we will show that there are infinitely many septuples $(A, B, C, D, E, F, G)$ satisfying Hypothesis FM with respect to the couples $(p, n)$, where $n$ is a sufficiently large integer. The following lemma is the main result in this section.

\begin{lemma}
\label{the-infinitude-of-septuples-A-B-C-D-E-F-G-satisfying-A1-A3-A4-A5-A6-A7-lemma}

Let $p$ be a prime such that $p \equiv 1 \pmod{8}$ and $p \equiv 2 \pmod{3}$. Then there exist infinitely many septuples $(A, B, C, D, E, F, G) \in \bZ^7$ such that they satisfy $(\ref{the-second-parametrization-of-septuples-A-B-C-D-E-F-G-satisfying-Hypothesis-FM-equations})$ and $(A1), (A3), (A4), (A5), (A7)$ in Definition \ref{septuple-satisfies-hypothesis-FM-definition}, and such that for any integer $n \ge 1$, they satisfy $(A6)$ in Definition \ref{septuple-satisfies-hypothesis-FM-definition} with respect to the couple $(p, n)$.

\end{lemma}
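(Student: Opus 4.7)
The approach is to exploit algebraic structure hidden in the parametrization $(\ref{the-second-parametrization-of-septuples-A-B-C-D-E-F-G-satisfying-Hypothesis-FM-equations})$ by introducing the auxiliary quantities
\begin{align*}
\alpha := 2pF_0(\epsilon_0 + 9\mu\gamma^2) - 9\gamma^2 t_0, \qquad \beta := 2F_0(\delta_0 - p\mu\lambda^2) + \lambda^2 t_0.
\end{align*}
The Bezout identity $p\lambda^2\epsilon_0 + 9\gamma^2\delta_0 = 1$ immediately yields $\lambda^2\alpha + 9\gamma^2\beta = 2F_0$, and a direct expansion gives the clean factorizations
\begin{align*}
B = F_0(p\beta - \alpha), \quad C = F_0(p\beta + \alpha), \quad E = F_0\alpha\beta, \quad F = 2F_0.
\end{align*}
Substituting these into $(\ref{threefold-Xp-equations})$ verifies $(A1)$ at once, and, more importantly, reduces the obstruction quantities of $(A4)$ to tractable expressions, e.g. $AC - BD = 2pF_0(F_0 - 9\gamma^2\beta)$, with analogous expressions for $DE - CF$ and $AE - BF$ derivable from the same identities.

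With these reductions in hand, I would fix $\lambda$ and $\gamma$ to be small odd integers coprime to $3p$ and to each other (imposing in addition that every odd prime divisor of $\lambda\gamma$ is a square modulo which $p$ is a quadratic residue --- a finite constraint), compute a Bezout pair $(\epsilon_0,\delta_0)$, and then vary $F_0,t_0,\mu$. Condition $(A3)$ follows: the hypotheses on $\lambda,\gamma,p$ force $\gcd(A,D,G)=1$ and $G\not\equiv 0\pmod p$, while $E\not\equiv 0\pmod p$ reduces to the congruence $t_0(2F_0\delta_0 + \lambda^2 t_0)\not\equiv 0\pmod p$. Since $p\equiv 2\pmod 3$ one computes $A\equiv 1\pmod 3$, so $(A7)$ and the stronger form of $(A6)$ for $n=1$, namely $v_3(E)-v_3(G)<6$, reduce to explicit residue conditions on $F_0,t_0,\mu$ modulo $3$ (e.g.\ $3\nmid F_0$ together with the correct residue class of $t_0\bmod 3$). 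For $(A5)$, the congruences $p\equiv 1\pmod 8$ and $p\equiv 2\pmod 3$ give $\gcd(6,p-1)=2$, so sixth powers in $\bF_p^\times$ coincide with squares, and the only cube root of unity in $\bF_p^\times$ is $1$. Thus the existence of $H$ with $EH^6\equiv G\pmod p$ becomes the requirement that $G/E$ be a square mod $p$, a single residue condition on $(F_0,t_0)$ mod $p$; once $H$ is chosen, the condition that $A+BH^4$ be a QNR mod $p$ depends linearly on $B$ and hence linearly on $t_0$, and a standard counting argument shows that the admissible residue classes of $t_0\bmod p$ for which both conditions hold form a nonempty set of density $\tfrac12$.

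The main obstacle is $(A4)$. Using the factored forms above, any odd prime $l\ne 3,p$ dividing $\gcd(AC-BD,DE-CF,AE-BF)$ must divide a product whose factors are drawn from $F_0$, $\alpha$, $\beta$, and the fixed integer $\lambda\gamma$. The finitely many divisors of $\lambda\gamma$ are controlled at the outset. The divisors of $F_0,\alpha,\beta$ are controlled by forcing $F_0$ (and suitable linear combinations in $t_0,\mu$ representing $\alpha$ and $\beta$) to be primes themselves, via Dirichlet's theorem on primes in arithmetic progressions, and placing those primes in residue classes modulo $4p$ which, by quadratic reciprocity, guarantee $\left(\tfrac{p}{l}\right)=1$. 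Collecting all residue constraints from Steps above --- finitely many congruences modulo $2$, $3$, $p$, and modulo the primes dividing $\lambda\gamma$ --- into a single compatible system and applying Dirichlet's theorem a final time produces infinitely many admissible triples $(F_0,t_0,\mu)$, and distinct triples yield non-proportional septuples. The most delicate point is the mutual compatibility of the residue conditions produced by $(A4)$ with those produced by $(A5)$, since both impose nontrivial constraints modulo $p$; verifying that the intersection of the admissible classes is nonempty is where the bulk of the bookkeeping lies.
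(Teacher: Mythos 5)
Your opening algebraic observation is genuinely the right structure: with $\alpha := 2pF_0(\epsilon_0 + 9\mu\gamma^2) - 9\gamma^2 t_0$ and $\beta := 2F_0(\delta_0 - p\mu\lambda^2) + \lambda^2 t_0$ one indeed gets $E = F_0\alpha\beta$, $F = 2F_0$, $B = F_0(p\beta - \alpha)$, $C = F_0(p\beta + \alpha)$, and the Bezout relation $\lambda^2\alpha + 9\gamma^2\beta = 2F_0$. The paper works with exactly these quantities (it calls $\alpha$ and $\beta$ the factors $E_1^{\ast}$, $E_2^{\ast}$ of $E$). Your reductions of $(A3)$, $(A5)$, $(A6)$, $(A7)$ to residue constraints are also essentially correct.

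The gap is in $(A4)$. You propose forcing $F_0$, $\alpha$, $\beta$ to be primes ``via Dirichlet's theorem.'' But the identity $\lambda^2\alpha + 9\gamma^2\beta = 2F_0$ shows the linear part of the affine map $(t_0,\mu)\mapsto(\alpha,\beta)$ has determinant zero, so once $F_0$ and $t_0$ are fixed, $\alpha$ and $\beta$ are two distinct linear polynomials in the single remaining parameter $\mu$. Making two non-proportional linear polynomials in one variable simultaneously prime infinitely often is Dickson's conjecture (Schinzel's Hypothesis H), not Dirichlet. Your route therefore cannot be carried out unconditionally, and the statement in question is asserted unconditionally. This is not incidental: the paper invokes Schinzel's Hypothesis H in Corollary~\ref{Corollary-Schinzel-Hypothesis-H-produces-septuples-satisfying-Hypothesis-FM} precisely to make $\alpha$, $\beta$ (there $E_1,E_2$) and one more linear form simultaneously prime, in order to additionally secure $(A2)$, which the present lemma deliberately omits.

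The unconditional route the paper takes --- and which you could extract from your own formulas --- is to notice that $(A4)$ concerns divisors of the gcd, so it suffices to control $AC-BD$ alone. You already computed $AC - BD = 2pF_0(F_0 - 9\gamma^2\beta)$; after substituting the expression for $\beta$ and pulling out a factor of $p$ (using the choice $27\lambda^3\gamma^3 t_1 - pt_4 = 1$), this becomes $2p^2F_0^2\,Q_1^{\ast}$ where $Q_1^{\ast}$ is a \emph{single} linear polynomial in the free parameter $\mu_1$. One application of Dirichlet makes $Q_1^{\ast}$ an odd prime in a residue class modulo $p$ forcing $\left(\tfrac{p}{Q_1^{\ast}}\right)=1$, and the constraints on the prime divisors of $F_0$ are imposed directly. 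Then every odd $l$ coprime to $6p$ dividing the gcd also divides $F_0^2Q_1^{\ast}$ and is controlled, while $\alpha$ and $\beta$ are allowed to factor arbitrarily. This also makes your extra hypothesis on the prime divisors of $\lambda\gamma$ unnecessary. So: right parametrization, wrong lever. Attack $AC-BD$, not $E$.
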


\begin{proof}

Let $\lambda$ and $\gamma$ be odd integers such that
\begin{align}
\label{lambda-gamma-divisibility-properties-equations}
\gcd(\lambda, 3\gamma) = \gcd(p, 3\gamma) = \gcd(p, \lambda) = 1.
\end{align}
Since $\gcd(p\lambda^2, 9\gamma^2) = 1$, there exist non-zero integers $\epsilon^{\ast}$ and $\delta^{\ast}$ such that
\begin{align*}
p\lambda^2\epsilon^{\ast} + 9\gamma^2\delta^{\ast} = 1.
\end{align*}
We see that $\epsilon_0 = \epsilon^{\ast} + 9\gamma^2s^{\ast}$ and $\delta_0 = \delta^{\ast} - p\lambda^2s^{\ast}$ satisfy the following equation
\begin{align}
\label{epsilon0-delta0-equations}
p\lambda^2\epsilon_0 + 9\gamma^2\delta_0 = 1,
\end{align}
where $s^{\ast}$ is an arbitrary integer. For our purpose, we choose $s^{\ast}$ such that $\delta^{\ast} + s^{\ast} \equiv 2 \pmod{3}$. Since $\lambda \not\equiv 0 \pmod{3}$ and $p \equiv 2 \pmod{3}$, we deduce that
\begin{align}
\label{delta0-modulo-3-equal-2-equation}
\delta_0 = \delta^{\ast} - p\lambda^2s^{\ast}  \equiv \delta^{\ast} - 2s^{\ast} \equiv \delta^{\ast} + s^{\ast} \equiv 2 \pmod{3}.
\end{align}

Let $(A, B, C, D, E, F, G)$ be the septuple of integers defined by $(\ref{the-second-parametrization-of-septuples-A-B-C-D-E-F-G-satisfying-Hypothesis-FM-equations})$, where $\mu, t_0$ and $F_0$ will be determined later. It is not difficult to prove that $(A, B, C, D, E, F, G)$ belongs to $X_p(\bQ)$, where $X_p$ is the threefold defined by $(\ref{threefold-Xp-equations})$, and hence it satisfies $(A1)$ in Definition \ref{septuple-satisfies-hypothesis-FM-definition}. By $(\ref{the-second-parametrization-of-septuples-A-B-C-D-E-F-G-satisfying-Hypothesis-FM-equations})$ and $(\ref{the-representation-of-B-and-C-in-terms-of-D-and-A-in-the-second-parametrization-of-the-septuples-equations})$, we see that
\begin{align}
\label{AC-BD-equation}
AC - BD &= A(2pF_0^2\left(\delta_0 + \epsilon_0 - 2\mu A\right) + 2At_0F_0) - D(2pF_0^2\left(\delta_0 - \epsilon_0 - 2\mu D\right) + 2Dt_0F_0) \nonumber \\
        &= 4pF_0^2(D^2 - A^2)\mu + 2pF_0^2\left(A(\delta_0 + \epsilon_0) - D(\delta_0 - \epsilon_0)\right) + 2t_0F_0(A^2 - D^2) \nonumber \\
        &= 4p^2F_0^2G^2\mu + 2pF_0^2\left(\epsilon_0(A + D) + \delta_0(A - D) \right) - 2p t_0F_0G^2 \; \; (\text{since} \; A^2 - D^2 + pG^2 = 0)  \nonumber \\
        &= 2pF_0 \left(2pF_0G^2 \mu + F_0\left(\epsilon_0(A + D) + \delta_0(A - D) \right) - t_0G^2 \right) \nonumber \\
        &= 2pF_0Q^{\ast}.
\end{align}
Here,
\begin{align*}
Q^{\ast} = 2pF_0G^2 \mu + F_0\left(\epsilon_0(A + D) + \delta_0(A - D) \right) - t_0G^2.
\end{align*}
By $(\ref{the-second-parametrization-of-septuples-A-B-C-D-E-F-G-satisfying-Hypothesis-FM-equations})$, we see that
\begin{align*}
A + D &= p\lambda^2, \\
A - D &= -9\gamma^2.
\end{align*}
By the above identities, $(\ref{epsilon0-delta0-equations})$, and since $G = 3\lambda \gamma$, we deduce that
\begin{align}
\label{the-original-equation-of-Q-ast-equation}
Q^{\ast} &= 2pF_0G^2 \mu + F_0\left(p\lambda^2\epsilon_0 - 9\gamma^2\delta_0\right) - t_0G^2  \nonumber \\
         &= 2pF_0G^2 \mu + F_0\left(2p\lambda^2\epsilon_0 - 1\right) - t_0G^2     \nonumber \\
         &= (18p\lambda^2\gamma^2F_0)\mu + F_0\left( 2p\lambda^2\epsilon_0 - 1  \right) - 9\lambda^2\gamma^2t_0.
\end{align}

$\star$ \textit{Step 1. Choosing $t_0$.}

We define
\begin{align}
\label{t0-equation-in-terms-of-t1-and-F0-equation}
t_0 := -3\lambda \gamma F_0t_1,
\end{align}
where $t_1$ is an integer which will be chosen below in this step, and $F_0$ will be chosen in \textit{Step 2}. By $(\ref{the-original-equation-of-Q-ast-equation})$, one can write $Q^{\ast}$ in the form
\begin{align}
\label{Q-ast-first-definition-equation}
Q^{\ast} = F_0\left((18p\lambda^2\gamma^2)\mu + 2p\lambda^2\epsilon_0 - 1 + 27\lambda^3\gamma^3t_1\right) = F_0\left(P^{\ast}_1\mu + R^{\ast}_1 \right),
\end{align}
where
\begin{align}
\label{definition-of-P-ast-1-and-R-ast-1-equations}
\begin{cases}
P^{\ast}_1 := 18p\lambda^2\gamma^2, \\
R^{\ast}_1 := 2p\lambda^2\epsilon_0 - 1 + 27\lambda^3\gamma^3t_1.
\end{cases}
\end{align}
Since $\gcd(27\lambda^3\gamma^3, p) = 1$, there exist non-zero integers $t_2$ and $t_3$ such that
\begin{align}
\label{t2-t3-equations-in-terms-of-p-and-lambda-gamma-equation}
27\lambda^3\gamma^3t_2 - pt_3 = 1.
\end{align}
Take any non-zero integer $\pi$ such that $\pi$ is a \textit{quadratic residue} in $\bF_p^{\times}$, and let $t_5$ be any non-zero integer such that
\begin{align}
\label{definition-of-t5-in-terms-of-pi-and-p-lambda-gamma-equation}
\begin{cases}
t_5 \equiv \dfrac{\pi - 2\lambda^2\epsilon_0 - t_3}{27\lambda^3\gamma^3} \pmod{p}\\
t_5 \equiv 1 - t_3 \pmod{2}.
\end{cases}
\end{align}
Note that there are infinitely many such integers $\pi$ and $t_5$. Define
\begin{align}
\label{t1-definition-in-terms-of-t2-and-t5-final-definition-of-t1-equation}
t_1 := t_2 + pt_5,
\end{align}
and
\begin{align}
\label{t4-definition-in-terms-of-t3-and-t5-definition}
t_4 := t_3 + 27\lambda^3\gamma^3t_5.
\end{align}
By $(\ref{t2-t3-equations-in-terms-of-p-and-lambda-gamma-equation})$, $(\ref{t1-definition-in-terms-of-t2-and-t5-final-definition-of-t1-equation})$ and $(\ref{t4-definition-in-terms-of-t3-and-t5-definition})$, we see that
\begin{align}
\label{t1-t4-equations-in-terms-of-p-and-lambda-gamma-equation}
27\lambda^3\gamma^3t_1 - pt_4 = 1.
\end{align}

In summary, by $(\ref{t0-equation-in-terms-of-t1-and-F0-equation})$ and $(\ref{t1-definition-in-terms-of-t2-and-t5-final-definition-of-t1-equation})$, $t_0$ is of the form
\begin{align}
\label{t0-final-definition-in-terms-of-t2-t5-F0-equation}
t_0 = -3\lambda \gamma F_0(t_2 + pt_5),
\end{align}
where $t_2$ is an integer satisfying $(\ref{t2-t3-equations-in-terms-of-p-and-lambda-gamma-equation})$ and $t_5$ is any non-zero integer satisfying $(\ref{definition-of-t5-in-terms-of-pi-and-p-lambda-gamma-equation})$.

$\star$ \textit{Step 2. Choosing $F_0$.}

Define
\begin{align}
\label{the-definition-of-u-in-Step-2-choosing-F0-equation}
u :=
\begin{cases}
1 \; \; &\text{if $\lambda\gamma$ is a quadratic residue modulo $p$,} \\
0 \; \; &\text{if $\lambda\gamma$ is a quadratic non-residue modulo $p$.}
\end{cases}
\end{align}
We take $F_0$ to be any non-zero integer such that the following are true:
\begin{itemize}

\item [(F1)] $F_0 = 3^uF_1$, where $F_1$ is an integer such that $\gcd(F_1, 3) = \gcd(F_1, p) = 1$.

\item [(F2)] $p$ is a square in $\bQ_l^{\times}$ for each odd prime $l$ dividing $F_1$.

\end{itemize}

$\star$ \textit{Step 3. Defining $H$.}

We prove that $\tfrac{3\lambda\gamma}{F_0}$ is a quadratic residue in $\bF_p^{\times}$. Assume first that $\lambda\gamma$ is a quadratic residue in $\bF_p^{\times}$. By $(\ref{the-definition-of-u-in-Step-2-choosing-F0-equation})$ in \textit{Step 2}, we know that $u = 1$. By $(F1)$ in \textit{Step 2}, we see that
\begin{align*}
\dfrac{3\lambda\gamma}{F_0} = \dfrac{3\lambda\gamma}{3^{u}F_1} = \dfrac{\lambda\gamma}{F_1} .
\end{align*}
Hence, in order to prove that $\tfrac{3\lambda\gamma}{F_0}$ is a quadratic residue in $\bF_p^{\times}$, it suffices to prove that $F_1$ is a square in $\bF_p^{\times}$. Write $F_1$ in the form
\begin{align*}
F_1 = 2^{v_2(F_1)}\prod_{l \mid F_1} l^{v_l(F_1)},
\end{align*}
where the product is taken over all odd prime $l$ dividing $F_1$. Since $p \equiv 1 \pmod{8}$, we know that $2$ is a quadratic residue in $\bF_p^{\times}$. Hence it follows from $(F2)$ in \textit{Step 2} and the quadratic reciprocity law that
\begin{align*}
\left(\dfrac{F_1}{p}\right) &= \left(\dfrac{2^{v_2(F_1)}\prod_{l \mid F_1} l^{v_l(F_1)}}{p}\right) \\
&= \left(\dfrac{2^{v_2(F_1)}}{p}\right)\prod_{l \mid F_1}\left(\dfrac{l^{v_l(F_1)}}{p}\right) \\
&= \left(\dfrac{2}{p}\right)^{v_2(F_1)}\prod_{l \mid F_1}\left(\dfrac{p}{l}\right)^{v_l(F_1)} \\
&= 1.
\end{align*}
Thus $F_1$ is a quadratic residue in $\bF_p^{\times}$, and hence $\tfrac{3\lambda\gamma}{F_0}$ is a quadratic residue in $\bF_p^{\times}$.

Assume now that $\lambda\gamma$ is a quadratic non-residue in $\bF_p^{\times}$. By $(\ref{the-definition-of-u-in-Step-2-choosing-F0-equation})$ in \textit{Step 2}, we know that $u = 0$. Hence $3^u = 1$, and hence $F_0 = 3^uF_1 = F_1$. As was shown above, $F_1$ is a square in $\bF_p^{\times}$, and thus $F_0$ is a square in $\bF_p^{\times}$. Since $p \equiv 2 \pmod{3}$, we deduce that $p$ is a quadratic non-residue in $\bF_3^{\times}$. By the quadratic reciprocity law, we deduce that $3$ is a quadratic non-residue in $\bF_p^{\times}$. Thus $3\lambda\gamma$ is a square in $\bF_p^{\times}$, and therefore it follows that $\tfrac{3\lambda\gamma}{F_0}$ is a quadratic residue in $\bF_p^{\times}$.

Therefore, in any event, $\tfrac{3\lambda\gamma}{F_0}$ is a square in $\bF_p^{\times}$. We now define $H$ to be any non-zero integer such that
\begin{align}
\label{H-definition-equation}
H \equiv \left(\dfrac{3\lambda\gamma}{F_0}\right)^{1/2} \pmod{p}.
\end{align}

$\star$ \textit{Step 4. Defining $\mu$.}

By $(\ref{definition-of-P-ast-1-and-R-ast-1-equations})$ and $(\ref{t1-t4-equations-in-terms-of-p-and-lambda-gamma-equation})$, $R^{\ast}_1$ can be written in the form
\begin{align*}
R^{\ast}_1 = 2p\lambda^2\epsilon_0 - 1 + 27\lambda^3\gamma^3t_1 = 2p\lambda^2\epsilon_0 + pt_4 = pR^{\ast}_2,
\end{align*}
where
\begin{align}
\label{R-ast-2-definition-equation}
R^{\ast}_2 := 2\lambda^2\epsilon_0 + t_4.
\end{align}
By $(\ref{Q-ast-first-definition-equation})$ and $(\ref{definition-of-P-ast-1-and-R-ast-1-equations})$, we can write $Q^{\ast}$ in the form
\begin{align}
\label{Q-ast-second-definition-equation}
Q^{\ast} = F_0(P_1^{\ast}\mu + R_1^{\ast}) = pF_0(P^{\ast}_2\mu + R^{\ast}_2) = pF_0Q^{\ast}_1,
\end{align}
where
\begin{align}
\label{Q-ast-1-definition-equation}
Q^{\ast}_1 = P^{\ast}_2\mu + R^{\ast}_2
\end{align}
and
\begin{align}
\label{P-ast-2-definition-equation}
P^{\ast}_2 = 18\lambda^2\gamma^2.
\end{align}

We contend that $\gcd(3pP^{\ast}_2, R^{\ast}_2) = 1$. Since $\lambda$ and $\gamma$ is odd, it follows from $(\ref{definition-of-t5-in-terms-of-pi-and-p-lambda-gamma-equation})$, $(\ref{t4-definition-in-terms-of-t3-and-t5-definition})$ and $(\ref{R-ast-2-definition-equation})$ that
\begin{align}
\label{R-ast-2-congruent-to-1-modulo-2-equation}
R^{\ast}_2 \equiv t_4 \equiv t_3 + 27\lambda^3\gamma^3t_5 \equiv t_3 + t_5 \equiv 1 \pmod{2}.
\end{align}
Since $\gcd(p, \lambda) = 1$, it follows from $\ref{t1-t4-equations-in-terms-of-p-and-lambda-gamma-equation})$ and $(\ref{R-ast-2-definition-equation})$ that
\begin{align}
\label{R-ast-2-congruent-to-non-zero-modulo-lambda-equation}
R^{\ast}_2 \equiv t_4 \equiv  -\dfrac{1}{p} \not\equiv 0 \pmod{l}
\end{align}
for each odd prime $l$ dividing $\lambda$, and hence $\gcd(R^{\ast}_2, \lambda) = 1$. Since $\gcd(p, 3\gamma) = 1$, it follows from $(\ref{epsilon0-delta0-equations})$, $(\ref{t1-t4-equations-in-terms-of-p-and-lambda-gamma-equation})$ and $(\ref{R-ast-2-definition-equation})$ that
\begin{align}
\label{R-ast-2-mod-3-gamma-equals-non-zero-equation}
R^{\ast}_2 = 2\lambda^2\epsilon_0 + t_4 \equiv \dfrac{2}{p} + t_4 \equiv \dfrac{2}{p} - \dfrac{1}{p} \equiv \dfrac{1}{p} \not\equiv 0 \pmod{l}
\end{align}
for each odd prime $l$ dividing $3\gamma$, and hence $\gcd(R^{\ast}_2, 3\gamma) = 1$. By $(\ref{definition-of-t5-in-terms-of-pi-and-p-lambda-gamma-equation})$, $(\ref{t4-definition-in-terms-of-t3-and-t5-definition})$ and $(\ref{R-ast-2-definition-equation})$, we see that
\begin{align}
\label{R-ast-2-mod-p-equal-pi-equation}
R^{\ast}_2 = 2\lambda^2\epsilon_0 + t_4 = 2\lambda^2\epsilon_0 + t_3 + 27\lambda^3\gamma^3t_5 \equiv \pi \not\equiv 0 \pmod{p},
\end{align}
and hence $\gcd(R^{\ast}_2, p) = 1$. Since $P^{\ast}_2 = 18\lambda^2\gamma^2$, it follows from $(\ref{R-ast-2-congruent-to-1-modulo-2-equation})$, $(\ref{R-ast-2-congruent-to-non-zero-modulo-lambda-equation})$, $(\ref{R-ast-2-mod-3-gamma-equals-non-zero-equation})$ and $(\ref{R-ast-2-mod-p-equal-pi-equation})$ that
\begin{align*}
\gcd(3pP^{\ast}_2, R^{\ast}_2) = 1.
\end{align*}

We now define $\mu$. Since $\gcd(3pP^{\ast}_2, R^{\ast}_2) = 1$, it follows from the Dirichlet's theorem on arithmetic progressions that there are infinitely many integers $\mu_1$ such that $3pP^{\ast}_2\mu_1 + R^{\ast}_2$ is an odd prime. Take such an integer $\mu_1$, and define
\begin{align}
\label{the-definition-of-mu-in-terms-of-p-and-mu-1-equation}
\mu = 3p\mu_1.
\end{align}
By $(\ref{Q-ast-1-definition-equation})$, the choice of $\mu_1$ and the definition of $\mu$, we see that $Q^{\ast}_1$ is an odd prime.

$\star$ \textit{Step 5. Verifying $(A3)$.}

By $(\ref{the-second-parametrization-of-septuples-A-B-C-D-E-F-G-satisfying-Hypothesis-FM-equations})$ and $(\ref{lambda-gamma-divisibility-properties-equations})$, we see that $\gcd(A, D, G) = 1$ and $G \not\equiv 0 \pmod{p}$. Hence, it remains to verify that $E \not\equiv 0 \pmod{p}$. We see that
\begin{align}
\label{E-modulo-p-equation}
E &= F_0(2pF_0(\epsilon_0 + 9\mu\gamma^2) - 9\gamma^2t_0)(2F_0(\delta_0 - p\mu\lambda^2) + \lambda^2t_0) \; \; \; (\text{by} \; (\ref{the-second-parametrization-of-septuples-A-B-C-D-E-F-G-satisfying-Hypothesis-FM-equations})) \nonumber \\
  &\equiv -9\gamma^2F_0t_0 (\lambda^2t_0 + 2\delta_0F_0)   \nonumber \\
  &\equiv 27\lambda\gamma^3F_0^2t_2(-3\lambda^3\gamma F_0t_2 + 2\delta_0F_0)  \; \; \; (\text{by} \; (\ref{t0-final-definition-in-terms-of-t2-t5-F0-equation}))   \nonumber \\
  &\equiv -27\lambda\gamma^3F_0^3t_2\left(3\lambda^3\gamma t_2 - \dfrac{2}{9\gamma^2}\right) \; \; \; (\text{by} \; (\ref{epsilon0-delta0-equations})) \nonumber \\
  &\equiv -\dfrac{F_0^3}{\lambda^2}\left(\dfrac{1}{9\gamma^2} - \dfrac{2}{9\gamma^2}\right)  \; \; \; (\text{by} \; (\ref{t2-t3-equations-in-terms-of-p-and-lambda-gamma-equation})) \nonumber \\
  &\equiv \dfrac{F_0^3}{9\lambda^2\gamma^2} \not\equiv 0 \pmod{p}.
\end{align}
Hence $(A3)$ holds.

$\star$ \textit{Step 6. Verifying $(A4)$.}

Let $l$ be any odd prime such that $\gcd(l, 3) = \gcd(l, p) = 1$ and $l$ divides $AC - BD$. We will prove that $p$ is a square in $\bQ_l^{\times}$. Indeed, by $(\ref{AC-BD-equation})$ and $(\ref{Q-ast-second-definition-equation})$, we can write $AC - BD$ in the form
\begin{align}
\label{AC-BD-equation-in-terms-of-p-F0-Q-ast-1-equation}
AC - BD = 2pF_0Q^{\ast} = 2p^2F_0^2Q^{\ast}_1,
\end{align}
where $Q^{\ast}_1$ is given by $(\ref{Q-ast-1-definition-equation})$. Recall that by the choice of $\mu$ in \textit{Step 4}, $Q^{\ast}_1$ is an odd prime. If $l$ divides $F_0$, then it follows from $(F1)$ in \textit{Step 2} that $l$ divides $F_1$. Hence it follows from $(F2)$ in \textit{Step 2} that $p$ is a square in $\bQ_l^{\times}$. If $\gcd(l, F_0) = 1$, we see that since $Q^{\ast}_1$ is an odd prime, $l$ divides $AC - BD$ and $\gcd(l, 2pF_0) = 1$, it follows from $(\ref{AC-BD-equation-in-terms-of-p-F0-Q-ast-1-equation})$ that $l = Q^{\ast}_1$. Hence it suffices to show that $p$ is a square in $\bQ_{Q^{\ast}_1}^{\times}$, where $\bQ_{Q^{\ast}_1}$ denotes the $Q^{\ast}_1$-adic field.

By $(\ref{Q-ast-1-definition-equation})$, $(\ref{R-ast-2-mod-p-equal-pi-equation})$ and $(\ref{the-definition-of-mu-in-terms-of-p-and-mu-1-equation})$, we see that
\begin{align*}
Q^{\ast}_1 =  P^{\ast}_2\mu + R^{\ast}_2 =  3pP^{\ast}_2\mu_1 + R^{\ast}_2 \equiv R^{\ast}_2 \equiv \pi \pmod{p}.
\end{align*}
By the choice of $\pi$ in \textit{Step 1}, we know that $\pi$ is a quadratic residue in $\bF_p^{\times}$. Hence it follows from the last congruence that $\left(\dfrac{Q^{\ast}_1}{p}\right) = 1$, where $\left(\dfrac{\cdot}{\cdot}\right)$ denotes the Jacobi symbol. By the quadratic reciprocity law, we see that
\begin{align*}
\left(\dfrac{p}{Q^{\ast}_1}\right) = \left(\dfrac{Q^{\ast}_1}{p}\right) = 1,
\end{align*}
and thus $p$ is a square in $\bQ_{Q^{\ast}_1}^{\times}$. Hence $(A4)$ holds.

$\star$ \textit{Step 7. Verifying $(A5)$.}

Since $p \equiv 2 \pmod{3}$, $p$ is a quadratic non-residue in $\bF_3^{\times}$. By the quadratic reciprocity law, we deduce that $3$ is a quadratic non-residue in $\bF_p^{\times}$, and hence $-3$ is not a square in $\bF_p^{\times}$. Thus the group of all cube roots of unity in $\bF_p^{\times}$ is trivial. We will prove that $H$ satisfies $(A5)$, where $H$ is defined in \textit{Step 3}. Note that since the group of all cube roots of unity in $\bF_p^{\times}$ is trivial, the second condition in $(A5)$ is tantamount to saying that $A + BH^4$ is a quadratic non-residue in $\bF_p^{\times}$.

By $(\ref{the-second-parametrization-of-septuples-A-B-C-D-E-F-G-satisfying-Hypothesis-FM-equations})$, $(\ref{H-definition-equation})$ and $(\ref{E-modulo-p-equation})$, we see that
\begin{align*}
G - EH^6 &\equiv 3\lambda\gamma - \left(\dfrac{F_0^3}{9\lambda^2\gamma^2}\right)\left(\dfrac{27\lambda^3\gamma^3}{F_0^3}\right) \\
         &\equiv 3\lambda\gamma - 3\lambda\gamma \equiv 0 \pmod{p}.
\end{align*}
We see that
\begin{align*}
A + BH^4 &\equiv -\dfrac{9\gamma^2}{2} + 9\gamma^2t_0F_0\left(\dfrac{9\lambda^2\gamma^2}{F_0^2}\right) \; \; \; (\text{by $(\ref{the-second-parametrization-of-septuples-A-B-C-D-E-F-G-satisfying-Hypothesis-FM-equations})$ and $(\ref{H-definition-equation})$})  \\
         &\equiv -\dfrac{9\gamma^2}{2} + 9\gamma^2(-3\lambda\gamma F_0(t_2 + pt_5))F_0\left(\dfrac{9\lambda^2\gamma^2}{F_0^2}\right) \; \; \; (\text{by $(\ref{t0-final-definition-in-terms-of-t2-t5-F0-equation})$}) \\
         &\equiv -\dfrac{9\gamma^2}{2} - 27\lambda\gamma^3 F_0^2t_2\left(\dfrac{9\lambda^2\gamma^2}{F_0^2}\right)\\
         &\equiv -\dfrac{9\gamma^2}{2} - (27\lambda^3\gamma^3 t_2) 9\gamma^2 \\
         &\equiv -\dfrac{9\gamma^2}{2} - 9\gamma^2 \; \; \; (\text{by $(\ref{t2-t3-equations-in-terms-of-p-and-lambda-gamma-equation})$}) \\
         &\equiv 3\left(\dfrac{-1}{2}\right)(3\gamma)^2 \pmod{p}.
\end{align*}
Since $-1$ and $2$ are quadratic residues in $\bF_p^{\times}$ and $3$ is a quadratic non-residue in $\bF_p^{\times}$, we deduce that $3\left(\dfrac{-1}{2}\right)(3\gamma)^2$ is a quadratic non-residue in $\bF_p^{\times}$. Hence $A + BH^4$ is a quadratic non-residue in $\bF_p^{\times}$, and thus $(A5)$ holds.

$\star$ \textit{Step 8. Verifying $(A6)$.}

Since $\lambda \not\equiv 0 \pmod{3}$ and $p \equiv 2 \pmod{3}$, it follows from $(\ref{epsilon0-delta0-equations})$ that
\begin{align}
\label{epsilon0-modulo-3-equal-2-equation}
\epsilon_0 \equiv \dfrac{1}{p\lambda^2} \equiv \dfrac{1}{2} \equiv 2 \pmod{3}.
\end{align}
By $(\ref{the-second-parametrization-of-septuples-A-B-C-D-E-F-G-satisfying-Hypothesis-FM-equations})$ and $(\ref{t0-final-definition-in-terms-of-t2-t5-F0-equation})$, we see that
\begin{align*}
E &= F_0(2pF_0(\epsilon_0 + 9\mu\gamma^2) - 9\gamma^2t_0)(2F_0(\delta_0 - p\mu\lambda^2) + \lambda^2t_0) \\
  &= F_0(2pF_0(\epsilon_0 + 9\mu\gamma^2) - 9\gamma^2(-3\lambda \gamma F_0(t_2 + pt_5)))(2F_0(\delta_0 - p\mu\lambda^2) + \lambda^2(-3\lambda \gamma F_0(t_2 + pt_5))) \; \; (\text{by} \; (\ref{t0-final-definition-in-terms-of-t2-t5-F0-equation}))  \\
  &= F_0^3(2p(\epsilon_0 + 9\mu\gamma^2) + 27\lambda\gamma^3(t_2 + pt_5))(2(\delta_0 - p\mu\lambda^2) - 3\lambda^3\gamma(t_2 + pt_5)).
\end{align*}
Hence we deduce that
\begin{align}
\label{3-adic-valuation-of-E-equation}
v_3(E) = v_3(F_0^3) + v_3\left(2p(\epsilon_0 + 9\mu\gamma^2) + 27\lambda\gamma^3(t_2 + pt_5)\right) + v_3\left(2(\delta_0 - p\mu\lambda^2) - 3\lambda^3\gamma(t_2 + pt_5)\right).
\end{align}

By $(F1)$ in \textit{Step 2}, we see that
\begin{align}
\label{the-1st-summand-of-3-adic-valuation-of-E-equation}
v_3(F_0^3) = 3v_3(F_0) = 3v_3(3^uF_1) = 3v_3(3^u) + 3v_3(F_1) = 3u.
\end{align}
By $(\ref{epsilon0-modulo-3-equal-2-equation})$ and since $p \equiv 2 \pmod{3}$, we see that
\begin{align*}
2p(\epsilon_0 + 9\mu\gamma^2) + 27\lambda\gamma^3(t_2 + pt_5) \equiv 2p\epsilon_0 \equiv 8 \equiv 2 \pmod{3},
\end{align*}
and hence
\begin{align}
\label{the-2nd-summand-of-3-adic-valuation-of-E-equation}
v_3\left(2p(\epsilon_0 + 9\mu\gamma^2) + 27\lambda\gamma^3(t_2 + pt_5)\right) = 0.
\end{align}
By $(\ref{the-definition-of-mu-in-terms-of-p-and-mu-1-equation})$, we see that
\begin{align*}
\mu &= 3p\mu_1 \equiv 0 \pmod{3},
\end{align*}
and hence it follows from $(\ref{delta0-modulo-3-equal-2-equation})$ that
\begin{align*}
2(\delta_0 - p\mu\lambda^2) - 3\lambda^3\gamma(t_2 + pt_5) \equiv 2\delta_0 \equiv 4 \equiv 1 \pmod{3}.
\end{align*}
Thus we deduce that
\begin{align}
\label{the-3rd-summand-of-3-adic-valuation-of-E-equation}
v_3\left(2(\delta_0 - p\mu\lambda^2) - 3\lambda^3\gamma(t_2 + pt_5)\right) = 0.
\end{align}
Hence it follows from $(\ref{3-adic-valuation-of-E-equation})$, $(\ref{the-1st-summand-of-3-adic-valuation-of-E-equation})$, $(\ref{the-2nd-summand-of-3-adic-valuation-of-E-equation})$ and $(\ref{the-3rd-summand-of-3-adic-valuation-of-E-equation})$ that
\begin{align*}
v_3(E) = 3u,
\end{align*}
and thus we deduce from $(\ref{the-second-parametrization-of-septuples-A-B-C-D-E-F-G-satisfying-Hypothesis-FM-equations})$, $(\ref{lambda-gamma-divisibility-properties-equations})$ and $(\ref{the-definition-of-u-in-Step-2-choosing-F0-equation})$ that
\begin{align*}
v_3(E) - v_3(G) = 3u - v_3(3\lambda\gamma) = 3u - 1 \le 3 - 1 = 2 < 3n
\end{align*}
for any integer $n \ge 1$. Therefore $(A6)$ holds.

$\star$ \textit{Step 9. Verifying $(A7)$.}

By $(\ref{t0-final-definition-in-terms-of-t2-t5-F0-equation})$, we know that
\begin{align*}
t_0 = -3\lambda \gamma F_0(t_2 + pt_5) \equiv 0 \pmod{3}.
\end{align*}
Since $p \equiv 2 \pmod{3}$, $\lambda \not\equiv 0 \pmod{3}$ and $\mu = 3p\mu_1 \equiv 0 \pmod{3}$, it follows from $(\ref{the-second-parametrization-of-septuples-A-B-C-D-E-F-G-satisfying-Hypothesis-FM-equations})$, $(\ref{lambda-gamma-divisibility-properties-equations})$, $(\ref{delta0-modulo-3-equal-2-equation})$ and $(\ref{epsilon0-modulo-3-equal-2-equation})$ that
\begin{align*}
A + B &\equiv \dfrac{p\lambda^2}{2} + 2pF_0^2(\delta_0 - \epsilon_0) \\
      &\equiv 1 + 4F_0^2(2 - 2) \equiv 1 \not\equiv 0 \pmod{3}.
\end{align*}
By $(\ref{the-second-parametrization-of-septuples-A-B-C-D-E-F-G-satisfying-Hypothesis-FM-equations})$, we know that
\begin{align*}
G = 3\lambda\gamma \equiv 0 \pmod{3}.
\end{align*}
Hence $(A7)$ holds.

Therefore, by what we have shown above, our contention follows.

\end{proof}

\begin{remark}

Lemma \ref{the-infinitude-of-septuples-A-B-C-D-E-F-G-satisfying-A1-A3-A4-A5-A6-A7-lemma} shows that for a prime $p$ with $p \equiv 1 \pmod{8}$ and $p \equiv 2 \pmod{3}$ and an integer $n \ge 1$, there are infinitely many septuples $(A, B, C, D, E, F, G)$ defined by $(\ref{the-second-parametrization-of-septuples-A-B-C-D-E-F-G-satisfying-Hypothesis-FM-equations})$ that satisfy $(A1), (A3), (A4), (A5), (A6)$ and $(A7)$ with respect to the couple $(p, n)$. It is not difficult to choose $n$ sufficiently large so that the septuples $(A, B, C, D, E, F, G)$ in Lemma \ref{the-infinitude-of-septuples-A-B-C-D-E-F-G-satisfying-A1-A3-A4-A5-A6-A7-lemma} satisfy Hypothesis FM with respect to the couple $(p, n)$. Such septuples produce infinitely many generalized Mordell curves of degree $12n$ that have no $\bQ$-rational points. More precisely, we prove the following.

\end{remark}

\begin{corollary}
\label{Corollary-the-2nd-parametrization-produces-generalized-Mordell-curves-of-big-degree-having-no-rational-points}

Let $p$ be a prime such that $p \equiv 1 \pmod{8}$ and $p \equiv 2 \pmod{3}$. There exists an infinite set $\fC_p \subseteq \bZ^7$ consisting of the septuples $(A, B, C, D, E, F, G) \in \bZ^7$ defined by $(\ref{the-second-parametrization-of-septuples-A-B-C-D-E-F-G-satisfying-Hypothesis-FM-equations})$ that satisfy $(A1), (A3), (A4), (A5), (A7)$ in Definition \ref{septuple-satisfies-hypothesis-FM-definition}. Furthermore, for each septuple $\cT := (A, B, C, D, E, F, G) \in \fC_p$, there exists a positive real number $n_{E, G} \ge 1$ such that for any integer $n > n_{E, G}$, the septuple $\cT$ satisfies Hypothesis FM with respect to $(p, n)$ in the sense of Definition \ref{septuple-satisfies-hypothesis-FM-definition}, and the smooth projective model $\cC_{n, \cT}$ of the affine curve defined by
\begin{align*}
\cC_{n, \cT} : pz^2 = E^2x^{12n} - G^2
\end{align*}
satisfies $\cC_{n, \cT}(\bA_{\bQ})^{\Br} = \emptyset$.

\end{corollary}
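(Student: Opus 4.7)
The plan is to let $\fC_p$ be precisely the infinite family of septuples produced by Lemma \ref{the-infinitude-of-septuples-A-B-C-D-E-F-G-satisfying-A1-A3-A4-A5-A6-A7-lemma}. That lemma already establishes axioms (A1), (A3), (A4), (A5), and (A7) of Hypothesis FM, and it also establishes (A6) uniformly for every integer $n \ge 1$, so the only remaining axiom to be verified is (A2). This is precisely the condition that the threshold $n_{E, G}$ is designed to handle.

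Fix a septuple $\cT = (A, B, C, D, E, F, G) \in \fC_p$. Consider the finite set
\[
\Lambda_{\cT} := \left\{\, l \text{ odd prime} \;:\; \gcd(l, 3p) = 1,\; l \mid E,\; p \text{ is a non-square in } \bQ_l^{\times} \,\right\},
\]
which is finite because $E$ is a nonzero integer. Define
\[
n_{E, G} := \max\left\{\, 1,\; \max_{l \in \Lambda_{\cT}} \frac{v_l(E) - v_l(G)}{6} \,\right\},
\]
with the convention that the inner maximum equals $0$ whenever $\Lambda_{\cT} = \emptyset$. For any integer $n > n_{E, G}$ and any odd prime $l$ with $\gcd(l, 3p) = 1$ and $l \mid E$, either $p$ is a square in $\bQ_l^{\times}$, fulfilling the first clause of (A2), or $l \in \Lambda_{\cT}$, in which case $v_l(E) - v_l(G) \le 6 n_{E, G} < 6n$, fulfilling the second clause of (A2). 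Hence $\cT$ satisfies (A2) with respect to the couple $(p, n)$.

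Combining this with the axioms already verified in Lemma \ref{the-infinitude-of-septuples-A-B-C-D-E-F-G-satisfying-A1-A3-A4-A5-A6-A7-lemma}, the septuple $\cT$ satisfies Hypothesis FM with respect to $(p, n)$ for every integer $n > n_{E, G}$. Applying Theorem \ref{non-existence-of-rational-points-for-generalized-mordell-curves-using-rational-points-on-threefold-Xp-theorem} to $\cT$ and $n$ then yields $\cC_{n, \cT}(\bA_{\bQ})^{\mathrm{Br}} = \emptyset$. The infinitude of $\fC_p$ is immediate from Lemma \ref{the-infinitude-of-septuples-A-B-C-D-E-F-G-satisfying-A1-A3-A4-A5-A6-A7-lemma}.

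There is no substantive obstacle here beyond bookkeeping: the corollary is essentially a routine packaging of the previously established tools, with Lemma \ref{the-infinitude-of-septuples-A-B-C-D-E-F-G-satisfying-A1-A3-A4-A5-A6-A7-lemma} supplying every axiom of Hypothesis FM except (A2), and the explicit threshold $n_{E, G}$ — chosen to dominate $(v_l(E) - v_l(G))/6$ over the finite set $\Lambda_{\cT}$ — handling (A2). The only mild care required is the handling of the edge case $\Lambda_{\cT} = \emptyset$, which is accommodated by the outer $\max$ with $1$ so that $n_{E, G} \ge 1$ as demanded by the statement.
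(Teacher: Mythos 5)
Your proof is correct and takes essentially the same approach as the paper: both define $\fC_p$ via Lemma \ref{the-infinitude-of-septuples-A-B-C-D-E-F-G-satisfying-A1-A3-A4-A5-A6-A7-lemma}, both introduce the finite set of ``bad'' primes (your $\Lambda_{\cT}$ is the paper's $\cS_{E,G}$), and both set $n_{E,G}$ to dominate $(v_l(E)-v_l(G))/6$ over that set so that (A2) holds for $n > n_{E,G}$, with the same invocation of Theorem \ref{non-existence-of-rational-points-for-generalized-mordell-curves-using-rational-points-on-threefold-Xp-theorem} at the end.
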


\begin{proof}

Let $\fC_p$ be the set of the septuples $(A, B, C, D, E, F, G) \in \bZ^7$ satisfying the following two conditions:
\begin{itemize}

\item [(i)] $(A, B, C, D, E, F, G)$ satisfies $(\ref{the-second-parametrization-of-septuples-A-B-C-D-E-F-G-satisfying-Hypothesis-FM-equations})$ and $(A1), (A3), (A4), (A5), (A7)$ in Definition \ref{septuple-satisfies-hypothesis-FM-definition}, and

\item [(ii)] for any $n \ge 1$, $(A, B, C, D, E, F, G)$ satisfies $(A6)$ in Definition \ref{septuple-satisfies-hypothesis-FM-definition} with respect to the couple $(p, n)$.

\end{itemize}
By Lemma \ref{the-infinitude-of-septuples-A-B-C-D-E-F-G-satisfying-A1-A3-A4-A5-A6-A7-lemma}, we know that $\fC_p$ is of infinite cardinality.

Take any septuple $\cT := (A, B, C, D, E, F, G) \in \fC_p$, and let $\cS_{E, G}$ be the set of odd primes $l$ such that $\gcd(l, 3) = \gcd(l, p) = 1$, $l$ divides $E$ and $p$ is not a square in $\bQ_l^{\times}$. Set
\begin{align*}
n^{\ast}_{E, G} :=
\begin{cases}
\max_{l \in \cS_{E, G}}\left(\dfrac{v_l(E) - v_l(G)}{6}\right) \; \; \; &\text{if $\cS_{E, G} \ne \emptyset$,} \\
1 \; \; \; &\text{if $\cS_{E, G} = \emptyset$,}
\end{cases}
\end{align*}
and define
\begin{align*}
n_{E, G} := \max\left(1, n^{\ast}_{E, G}\right).
\end{align*}
Let $n$ be any integer such that $n > n_{E, G}$. Let $l$ be any odd prime such that $\gcd(l, 3) = \gcd(l, p) = 1$ and $l$ divides $E$. If $p$ is not a square in $\bQ_l^{\times}$, then $l$ belongs to $\cS_{E, G}$. Hence, by definition of $n_{E, G}$, we see that
\begin{align*}
6n > 6n_{E, G} \ge 6n^{\ast}_{E, G} \ge v_l(E) - v_l(G),
\end{align*}
which proves that the septuple $\cT$ satisfies $(A2)$ with respect to $(p, n)$, and thus it satisfies Hypothesis FM with respect to the couple $(p, n)$. The last assertion follows immediately from Theorem \ref{non-existence-of-rational-points-for-generalized-mordell-curves-using-rational-points-on-threefold-Xp-theorem}.

\end{proof}

\begin{remark}

Let $p$ be a prime such that $p \equiv 1 \pmod{8}$ and $p \equiv 2 \pmod{3}$, and let $n$ be a positive integer. In order to use the septuples $(A, B, C, D, E, F, G)$ arising from Lemma \ref{the-infinitude-of-septuples-A-B-C-D-E-F-G-satisfying-A1-A3-A4-A5-A6-A7-lemma} to produce generalized Mordell curves of degree $12n$ and generalized Fermat curves of signature $(12n, 12n, 12n)$ violating the Hasse principle explained by the Brauer-Manin obstruction following the approach described in Sections \ref{certain-generalized-Mordell-curves-violate-the-Hasse-principle-section} and \ref{certain-generalized-Fermat-curves-violate-the-Hasse-principle-section}, we need to show that there exist infinitely many septuples $(A, B, C, D, E, F, G)$ in Lemma \ref{the-infinitude-of-septuples-A-B-C-D-E-F-G-satisfying-A1-A3-A4-A5-A6-A7-lemma} satisfying Hypothesis FM with respect to the couple $(p, n)$. It suffices to show that there are infinitely many septuples $(A, B, C, D, E, F, G)$ in Lemma \ref{the-infinitude-of-septuples-A-B-C-D-E-F-G-satisfying-A1-A3-A4-A5-A6-A7-lemma} satisfying $(A2)$ with respect to $(p, n)$, that is,
\begin{align*}
v_l(E) - v_l(G) < 6n,
\end{align*}
where $l$ is any odd prime such that  $\gcd(l, 3) = \gcd(l, p) = 1$, $l$ divides $E$, and $p$ is not a square in $\bQ_l^{\times}$. We will use \textit{Schinzel's Hypothesis H} to prove that there should be infinitely many septuples $(A, B, C, D, E, F, G)$ in Lemma \ref{the-infinitude-of-septuples-A-B-C-D-E-F-G-satisfying-A1-A3-A4-A5-A6-A7-lemma} that satisfy Hypothesis FM with respect to the couple $(p, n)$.

\end{remark}

We recall the statement of Schinzel's Hypothesis H (see \cite{schinzel-sierpinski}).

\begin{conjecture}
\label{schinzel-hypothesis-H-conjecture}
$(\text{Schinzel's Hypothesis H})$

Let $F_1(x), F_2(x), \ldots, F_n(x)$ be nonconstant polynomials in $\bZ[x]$ such that the polynomials $F_1(x)$, $F_2(x), \ldots, F_{n - 1}(x)$ and $F_n(x)$ have positive leading coefficients and irreducible over $\bQ$. Assume that the polynomial
\begin{align*}
F(x) := \prod_{i = 1}^n F_i(x) \in \bZ[x]
\end{align*}
has no fixed prime divisor, that is, there is no prime $q$ dividing $F(m)$ for all integers $m$. Then there are infinitely many arbitrarily large positive integers $x$ such that $F_1(x), F_2(x), \ldots, F_{n - 1}(x)$ and $F_n(x)$ are simultaneously primes.

\end{conjecture}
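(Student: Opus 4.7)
The statement in question is \emph{Schinzel's Hypothesis H}, one of the most famous open conjectures in analytic number theory. It is a sweeping generalization of Dirichlet's theorem on primes in arithmetic progressions (the $n = 1$ case with $F_1$ linear), and it specializes to several notorious open problems: taking $F_1(x) = x$ and $F_2(x) = x + 2$ gives the twin prime conjecture, while $F_1(x) = x^2 + 1$ yields the conjecture on infinitely many primes of the form $x^2 + 1$. The case of several linear polynomials is Dickson's conjecture, itself open beyond $n = 1$. Accordingly, any honest plan must confront the fact that no unconditional proof is known in any case beyond a single linear polynomial, and the paper correctly treats Hypothesis H as an assumption to be invoked, not derived.

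The natural attack is sieve-theoretic. First I would attempt to estimate
\[
\pi_F(N) := \#\{\, x \le N \,:\, F_1(x), F_2(x), \ldots, F_n(x) \text{ are all prime}\,\}
\]
by combinatorial sieving against the product $F(x) = \prod_{i=1}^n F_i(x)$. The Hardy--Littlewood--Bateman--Horn heuristic predicts an asymptotic
\[
\pi_F(N) \sim \mathfrak{S}(F) \cdot \frac{N}{\prod_{i}\deg(F_i)\cdot (\log N)^n},
\]
where $\mathfrak{S}(F)$ is a singular series whose positivity is precisely equivalent to the no-fixed-prime-divisor hypothesis; proving Hypothesis H thus reduces to rigorously establishing the positivity of this main term. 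The concrete steps would be: (i) set up a sifted sum $\sum_{x \le N} \prod_{i} \Lambda_R(F_i(x))$ using a truncated von Mangoldt function or Selberg-type weights of level $R$; (ii) use multiplicativity and local point-counts on $\mathrm{Spec}\, \bZ[x]/(F_i)$ to extract $\mathfrak{S}(F)$; (iii) establish a sufficient level-of-distribution bound for $F_i(x) \pmod{q}$ as $q$ ranges up to $N^{\theta}$, in order to control the error terms.

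The fatal obstacle, which I expect to be insurmountable with current technology, is the \emph{parity problem} identified by Selberg: pure sieves cannot distinguish integers with an even number of prime factors from those with an odd number, and so cannot isolate primes (which have exactly one) among polynomial values as soon as $n \ge 2$. This is exactly the reason the twin prime conjecture remains open, and it blocks step (iii) in any attempted unconditional proof. Known partial breakthroughs skirt the parity barrier only in very special geometric situations: Iwaniec's theorem that $x^2 + 1$ is infinitely often a product of at most two primes, the Friedlander--Iwaniec theorem on primes of the form $x^2 + y^4$, Heath-Brown's result on $x^3 + 2y^3$, the Green--Tao theorem on long arithmetic progressions of primes, and the Maynard--Tao work on bounded prime gaps which yields infinitely many prime $n$-tuples for \emph{some} admissible configuration (not every one). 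None of these reaches Hypothesis H in the stated generality. Consequently, my plan cannot produce an unconditional proof, and the best I can realistically offer is a conditional one --- under the generalized Riemann hypothesis together with suitable uniformity conjectures, one can derive Hypothesis H in various restricted regimes --- which is precisely why the paper invokes Hypothesis H as a hypothesis in the sequel rather than establishing it.
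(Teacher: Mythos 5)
You have correctly recognized that this is a \emph{conjecture}, not a theorem: the paper labels it as Schinzel's Hypothesis H, cites it from Schinzel--Sierpi\'nski, and never proves it. It is invoked only as an assumption in Corollary \ref{Corollary-Schinzel-Hypothesis-H-produces-septuples-satisfying-Hypothesis-FM}. Your discussion of the sieve-theoretic heuristic, the Bateman--Horn singular series, and the parity obstruction is accurate and explains well why no unconditional proof is available, but none of that is needed here --- the correct answer is simply that there is nothing to prove, since the statement is a hypothesis adopted from the literature, and you identified that correctly.
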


\begin{corollary}
\label{Corollary-Schinzel-Hypothesis-H-produces-septuples-satisfying-Hypothesis-FM}

Let $p$ be a prime such that $p \equiv 1 \pmod{8}$ and $p \equiv 2 \pmod{3}$. Let $\fC_p$ be the set defined in Corollary \ref{Corollary-the-2nd-parametrization-produces-generalized-Mordell-curves-of-big-degree-having-no-rational-points}. Assume Schinzel's Hypothesis H. Let $n$ be a positive integer. Then there exist infinitely many septuples $(A, B, C, D, E, F, G)$ in $\fC_p$ that satisfy Hypothesis FM with respect to the couple $(p, n)$.

\end{corollary}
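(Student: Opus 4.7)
The plan is to upgrade the construction underlying Lemma~\ref{the-infinitude-of-septuples-A-B-C-D-E-F-G-satisfying-A1-A3-A4-A5-A6-A7-lemma} by replacing its use of Dirichlet's theorem on arithmetic progressions with an application of Schinzel's Hypothesis~H. By the definition of $\fC_p$ in Corollary~\ref{Corollary-the-2nd-parametrization-produces-generalized-Mordell-curves-of-big-degree-having-no-rational-points}, every septuple $\cT = (A, B, C, D, E, F, G) \in \fC_p$ already satisfies $(A1), (A3), (A4), (A5), (A6), (A7)$ of Definition~\ref{septuple-satisfies-hypothesis-FM-definition}; hence the only remaining obligation is to produce infinitely many such septuples that also satisfy $(A2)$ with respect to $(p, n)$, i.e., for every odd prime $l$ with $\gcd(l, 3p) = 1$ and $l \mid E$, either $p$ is a square in $\bQ_l^{\times}$ or $v_l(E) - v_l(G) < 6n$.

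The key observation is that after substituting the explicit choices $t_0 = -3\lambda\gamma F_0(t_2 + pt_5)$ and $\mu = 3p\mu_1$ from \textit{Steps 1 and 4} of the proof of Lemma~\ref{the-infinitude-of-septuples-A-B-C-D-E-F-G-satisfying-A1-A3-A4-A5-A6-A7-lemma} into the expression for $E$ in $(\ref{the-second-parametrization-of-septuples-A-B-C-D-E-F-G-satisfying-Hypothesis-FM-equations})$, one obtains a factorization
\begin{align*}
E = F_0^3 \cdot M_1(\mu_1) \cdot M_2(\mu_1),
\end{align*}
where $M_1(X), M_2(X) \in \bZ[X]$ are linear polynomials in the free parameter $X = \mu_1$ with leading coefficients $54p^2\gamma^2$ and $-6p^2\lambda^2$. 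Together with the linear polynomial $L_1(X) := Q_1^{\ast}(X) = 54p\lambda^2\gamma^2 X + R_2^{\ast}$ that was already forced to take prime values in \textit{Step 4} of Lemma~\ref{the-infinitude-of-septuples-A-B-C-D-E-F-G-satisfying-A1-A3-A4-A5-A6-A7-lemma} in order to guarantee $(A4)$, this gives three linear integer polynomials in $\mu_1$. After replacing $M_2(X)$ with $-M_2(X)$ to obtain a positive leading coefficient, I would apply Schinzel's Hypothesis~H (Conjecture~\ref{schinzel-hypothesis-H-conjecture}) to the product $L_1(X) M_1(X) M_2(X)$, obtaining infinitely many integers $\mu_1$ for which the three values $L_1(\mu_1), M_1(\mu_1), M_2(\mu_1)$ are, up to sign, simultaneously odd primes $q_0, q_1, q_2$.

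I would then verify $(A2)$. For every such $\mu_1$, the arguments of \textit{Steps 5--9} of Lemma~\ref{the-infinitude-of-septuples-A-B-C-D-E-F-G-satisfying-A1-A3-A4-A5-A6-A7-lemma} continue to apply verbatim, so $(A1), (A3), (A4), (A5), (A6), (A7)$ all hold. For $(A2)$, let $l$ be any odd prime with $\gcd(l, 3p) = 1$ and $l \mid E = F_0^3 M_1(\mu_1) M_2(\mu_1)$. Since $F_0 = 3^u F_1$ with $\gcd(F_1, 3p) = 1$, either $l$ divides $F_1$, in which case $p$ is a square in $\bQ_l^{\times}$ by condition $(F2)$ of \textit{Step 2} of Lemma~\ref{the-infinitude-of-septuples-A-B-C-D-E-F-G-satisfying-A1-A3-A4-A5-A6-A7-lemma}, or $l \in \{q_1, q_2\}$. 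In the latter case, imposing the further Chinese Remainder Theorem constraint that $q_1, q_2$ be distinct and coprime to $F_0$ (an extra finite set of residue conditions that still leaves infinitely many admissible $\mu_1$), one obtains $v_l(E) = 1$, whence $v_l(E) - v_l(G) \le 1 < 6n$ for every $n \ge 1$, regardless of the value of $v_l(G)$. Thus $(A2)$ holds, and the infinitude of admissible $\mu_1$ produces infinitely many septuples $\cT \in \fC_p$ satisfying Hypothesis~FM with respect to $(p, n)$.

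The main obstacle is verifying the hypothesis of Conjecture~\ref{schinzel-hypothesis-H-conjecture} that the product $L_1 M_1 M_2$ has no fixed prime divisor. For any prime $q$ coprime to all three leading coefficients and satisfying $q > 3$, this is automatic from the pigeonhole principle. The real work lies with $q \in \{2, 3, p\}$ and the primes dividing $\lambda\gamma$: the single-polynomial analogue of the condition for $L_1$ was already engineered in \textit{Steps 1--4} of Lemma~\ref{the-infinitude-of-septuples-A-B-C-D-E-F-G-satisfying-A1-A3-A4-A5-A6-A7-lemma} via a delicate choice of $\pi, t_5, \epsilon_0, \delta_0$, and I would enlarge this list of congruences (using the Chinese Remainder Theorem once more) so that for each such small prime $q$ some residue class of $\mu_1$ modulo $q$ simultaneously avoids the zero loci of all three polynomials $L_1, M_1, M_2$.
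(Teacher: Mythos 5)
Your approach mirrors the paper's closely: substitute the chosen $t_0$ and $\mu = 3p\mu_1$ to factor $E$ as $F_0^3$ times a product of two linear polynomials in $\mu_1$, apply Schinzel's Hypothesis~H to those factors together with $Q_1^{\ast}$, and use the primality of the resulting values to bound $v_l(E)$ and obtain $(A2)$. However, there is a concrete gap in the Schinzel step. Your two factors $M_1(X)$ and $M_2(X)$, with leading coefficients $54p^2\gamma^2$ and $-6p^2\lambda^2$ and constant terms $2p\epsilon_0 + 27\lambda\gamma^3 t_1$ and $2\delta_0 - 3\lambda^3\gamma t_1$, are \emph{identically even}: the leading coefficients are even, and the constant terms are also even because $t_1$ is even (this parity of $t_1$ follows from $27\lambda^3\gamma^3 t_2 - p t_3 = 1$ together with $t_5 \equiv 1 - t_3 \pmod{2}$, a fact that has to be established). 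Hence $2$ is a fixed prime divisor of $L_1 M_1 M_2$, so the hypothesis of Conjecture~\ref{schinzel-hypothesis-H-conjecture} fails outright. Your proposed remedy of adding further CRT constraints on $\mu_1$ cannot repair this: there is no residue class of $\mu_1$ modulo $2$ on which $M_1(\mu_1)$ is odd, because the evenness is a property of the polynomial, not of the argument. The missing idea, which the paper carries out, is to first extract the factor of $2$ from each linear piece: write $M_1 = 2E_1$ and $M_2 = -2E_2$, so $E = -4F_0^3 E_1 E_2$ with $E_1, E_2$ now having odd leading coefficients $27p^2\gamma^2$ and $3p^2\lambda^2$, and then apply Schinzel's Hypothesis~H to the product $E_1 E_2 Q_1^{\ast}$. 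With this fix, for any odd prime $l$ with $\gcd(l,3p) = 1$ and $l \nmid F_0$ one gets $v_l(E) = v_l(E_1 E_2) \le 2$, which still yields $v_l(E) - v_l(G) \le 2 < 6n$; this also makes your extra requirement that $q_1 \ne q_2$ unnecessary (it is in any case not a congruence condition on $\mu_1$ that CRT can impose directly).
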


\begin{proof}

We maintain the same notation as in the proof of Lemma \ref{the-infinitude-of-septuples-A-B-C-D-E-F-G-satisfying-A1-A3-A4-A5-A6-A7-lemma}. Using exactly the same words and repeating the same arguments from the beginning of the proof of Lemma \ref{the-infinitude-of-septuples-A-B-C-D-E-F-G-satisfying-A1-A3-A4-A5-A6-A7-lemma} to the end of \textit{Step 3} in the proof of Lemma \ref{the-infinitude-of-septuples-A-B-C-D-E-F-G-satisfying-A1-A3-A4-A5-A6-A7-lemma}, we let $\lambda, \gamma, \epsilon_0, \delta_0$ as in the proof of Lemma \ref{the-infinitude-of-septuples-A-B-C-D-E-F-G-satisfying-A1-A3-A4-A5-A6-A7-lemma}, and define $t_0$, $F_0$ and $H$ as in \textit{Steps 1, 2} and \textit{3} in the proof of Lemma \ref{the-infinitude-of-septuples-A-B-C-D-E-F-G-satisfying-A1-A3-A4-A5-A6-A7-lemma}. Let $(A, B, C, D, E, F, G)$ be the septuple of integers defined by $(\ref{the-second-parametrization-of-septuples-A-B-C-D-E-F-G-satisfying-Hypothesis-FM-equations})$, where $\mu$ will be determined now.

By $(\ref{t0-equation-in-terms-of-t1-and-F0-equation})$, we see that
\begin{align*}
E &= F_0(2pF_0(\epsilon_0 + 9\mu\gamma^2) - 9\gamma^2t_0)(2F_0(\delta_0 - p\mu\lambda^2) + \lambda^2t_0)  \\
&= F_0(2pF_0(\epsilon_0 + 9\mu\gamma^2) - 9\gamma^2(-3\lambda\gamma F_0t_1))(2F_0(\delta_0 - p\mu\lambda^2) + \lambda^2(-3\lambda\gamma F_0t_1)) \\
&= F_0^3(2p(\epsilon_0 + 9\mu\gamma^2) + 27\lambda\gamma^3t_1)(2(\delta_0 - p\mu\lambda^2) - 3\lambda^3\gamma t_1) \\
&= -F_0^3((18p\gamma^2)\mu + 2p\epsilon_0 + 27\lambda\gamma^3t_1)((2p\lambda^2)\mu - 2\delta_0 + 3\lambda^3\gamma t_1),
\end{align*}
and hence
\begin{align}
\label{Definition-Equation-E-in-terms-of-E1-ast-and-E2-ast-in-using-Schinzel-Hypothesis-H-to-produce-septuples-satisfying-Hypothesis-FM}
E = -F_0^3E_1^{\ast}E_2^{\ast},
\end{align}
where
\begin{align*}
E_1^{\ast} := (18p\gamma^2)\mu + 2p\epsilon_0 + 27\lambda\gamma^3t_1
\end{align*}
and
\begin{align*}
E_2^{\ast} := (2p\lambda^2)\mu - 2\delta_0 + 3\lambda^3\gamma t_1.
\end{align*}
Let
\begin{align}
\label{Definition-mu-in-terms-of-p-and-mu-1-in-Schinzel-Hypothesis-H-producing-septuples-satisfying-Hypothesis-FM}
\mu := 3p\mu_1,
\end{align}
where $\mu_1$ will be determined below. We see that
\begin{align*}
E_1^{\ast} := (54p^2\gamma^2)\mu_1 + 2p\epsilon_0 + 27\lambda\gamma^3t_1
\end{align*}
and
\begin{align*}
E_2^{\ast} := (6p^2\lambda^2)\mu_1 - 2\delta_0 + 3\lambda^3\gamma t_1.
\end{align*}

Write $t_1 = 2^vt_1^{\ast}$, where $v$ is a non-negative integer and $t_1^{\ast}$ is an odd integer. We contend that $t_1$ is an even integer, that is, $v \ge 1$. By $(\ref{definition-of-t5-in-terms-of-pi-and-p-lambda-gamma-equation})$, $(\ref{t4-definition-in-terms-of-t3-and-t5-definition})$, $(\ref{t1-t4-equations-in-terms-of-p-and-lambda-gamma-equation})$ and since $\lambda$ and $\gamma$ are odd, we see that
\begin{align*}
1 = 27\lambda^3\gamma^3t_1 - pt_4 \equiv t_1 + t_4 \equiv t_1 + t_3 + 27\lambda^3\gamma^3t_5 \equiv t_1 + t_3 + t_5 \equiv t_1 + 1 \pmod{2},
\end{align*}
and hence
\begin{align*}
t_1 \equiv 0 \pmod{2}.
\end{align*}
Thus $t_1$ is an even integer, and therefore $v \ge 1$.

We see that
\begin{align}
\label{Equation-E-1-ast-equals-2-E-1}
E_1^{\ast} = 2E_1
\end{align}
and
\begin{align}
\label{Equation-E-2-ast-equals-2-E-2}
E_2^{\ast} = 2E_2,
\end{align}
where
\begin{align}
\label{Definition-Equation-E-1-in-using-Schinzel-Hypothesis-H-to-produce-septuples-satisfying-Hypothesis-FM}
E_1 := (27p^2\gamma^2)\mu_1 + p\epsilon_0 + 27\lambda\gamma^3 2^{v - 1}t_1^{\ast}
\end{align}
and
\begin{align}
\label{Definition-Equation-E-2-in-using-Schinzel-Hypothesis-H-to-produce-septuples-satisfying-Hypothesis-FM}
E_2 := (3p^2\lambda^2)\mu_1 - \delta_0 + 3\lambda^3\gamma 2^{v - 1}t_1^{\ast}.
\end{align}

Let $Q_1^{\ast}$ be the integer defined by $(\ref{Q-ast-1-definition-equation})$ in \textit{Step 4} in the proof of Lemma \ref{the-infinitude-of-septuples-A-B-C-D-E-F-G-satisfying-A1-A3-A4-A5-A6-A7-lemma}. We can write $Q_1^{\ast}$ in the form
\begin{align}
\label{Definition-Equation-Q-1-ast-in-terms-of-mu-1-in-using-Schinzel-Hypothesis-H-to-produce-septuples-satisfying-Hypothesis-FM}
Q_1^{\ast} = P_2^{\ast}\mu + R_2^{\ast} = 3pP_2^{\ast}\mu_1 + R_2^{\ast},
\end{align}
where we recall from \textit{Step 4} in the proof of Lemma \ref{the-infinitude-of-septuples-A-B-C-D-E-F-G-satisfying-A1-A3-A4-A5-A6-A7-lemma} that
\begin{align*}
P_2^{\ast} = 18\lambda^2\gamma^2
\end{align*}
and
\begin{align*}
R_2^{\ast} = 2\lambda^2\epsilon_0 + t_4.
\end{align*}

Viewing $\mu_1$ as a variable, we see that $E_1, E_2$ and $Q_1^{\ast}$ are polynomials with integral coefficients in the variable $\mu_1$, i.e., $E_1, E_2$ and $Q_1^{\ast}$ belong to $\bZ[\mu_1]$. Upon assuming Schinzel's Hypothesis H, we will show that there should be infinitely many arbitrarily large positive integers $\mu_1$ such that $E_1, E_2$ and $Q_1^{\ast}$ are simultaneously primes. In order to apply Schinzel's Hypothesis H, we need to prove that the polynomial $\Psi(\mu_1) \in \bZ[\mu_1]$ defined by
\begin{align}
\label{Equation-Psi-mu-1-satisfies-Schinzel-Hypothesis-H}
\Psi(\mu_1) := E_1E_2Q_1^{\ast}
\end{align}
has no fixed divisors, that is, there is no prime $q$ dividing $\Psi(m)$ for every integer $m$. To prove the latter, it suffices to show that
\begin{align*}
\gcd\left(27p^2\gamma^2, p\epsilon_0 + 27\lambda\gamma^3 2^{v - 1}t_1^{\ast}\right) = 1,
\end{align*}
\begin{align*}
\gcd\left(3p^2\lambda^2, -\delta_0 + 3\lambda^3\gamma 2^{v - 1}t_1^{\ast} \right) = 1,
\end{align*}
and
\begin{align*}
\gcd\left(3pP_2^{\ast}, R_2^{\ast}\right) = 1.
\end{align*}
Using the same arguments as in \textit{Step 4} in the proof of Lemma \ref{the-infinitude-of-septuples-A-B-C-D-E-F-G-satisfying-A1-A3-A4-A5-A6-A7-lemma}, we see that
\begin{align}
\label{Q-1-ast-satisfies-conditions-in-Schinzel-Hypothesis-H}
\gcd\left(3pP_2^{\ast}, R_2^{\ast}\right) = 1.
\end{align}

We now prove that
\begin{align*}
\gcd\left(27p^2\gamma^2, p\epsilon_0 + 27\lambda\gamma^3 2^{v - 1}t_1^{\ast}\right) = 1.
\end{align*}
Indeed, by $(\ref{t1-t4-equations-in-terms-of-p-and-lambda-gamma-equation})$, we see that
\begin{align*}
2^{v - 1}t_1^{\ast} = \dfrac{t_1}{2} \equiv \dfrac{1}{54\lambda^3\gamma^3} \not\equiv 0 \pmod{p}.
\end{align*}
Since $\gcd(p, \lambda) = \gcd(p, \gamma) = \gcd(p, 3) = 1$, we deduce that
\begin{align*}
p\epsilon_0 + 27\lambda\gamma^3 2^{v - 1}t_1^{\ast} \equiv 27\lambda\gamma^3 2^{v - 1}t_1^{\ast} \not\equiv 0 \pmod{p}.
\end{align*}
Since $\gcd(\lambda, 3\gamma) = 1$, we see that if $l$ is any prime dividing $3\gamma$, then it follows from $(\ref{epsilon0-delta0-equations})$ that
\begin{align*}
p\epsilon_0 \equiv \dfrac{1}{\lambda^2} \not\equiv 0 \pmod{l},
\end{align*}
and hence
\begin{align*}
p\epsilon_0 + 27\lambda\gamma^3 2^{v - 1}t_1^{\ast} \equiv p\epsilon_0 \not\equiv 0 \pmod{l}.
\end{align*}
Thus we deduce that
\begin{align}
\label{E-1-satisfies-conditions-in-Schinzel-Hypothesis-H}
\gcd\left(27p^2\gamma^2, p\epsilon_0 + 27\lambda\gamma^3 2^{v - 1}t_1^{\ast}\right) = 1.
\end{align}

We prove that
\begin{align*}
\gcd\left(3p^2\lambda^2, -\delta_0 + 3\lambda^3\gamma 2^{v - 1}t_1^{\ast} \right) = 1.
\end{align*}
Indeed, by $(\ref{epsilon0-delta0-equations})$ and $(\ref{t1-t4-equations-in-terms-of-p-and-lambda-gamma-equation})$, we see that
\begin{align*}
-\delta_0 + 3\lambda^3\gamma 2^{v - 1}t_1^{\ast} = -\delta_0 + 3\lambda^3\gamma \left(\dfrac{t_1}{2}\right) &\equiv -\dfrac{1}{9\gamma^2} + 3\lambda^3\gamma\left(\dfrac{1}{54\lambda^3\gamma^3}\right) \\
&\equiv -\dfrac{1}{9\gamma^2} + \dfrac{1}{18\gamma^2} \\
&\equiv -\dfrac{1}{18\gamma^2} \not\equiv 0 \pmod{p}.
\end{align*}
By $(\ref{delta0-modulo-3-equal-2-equation})$, we see that
\begin{align*}
-\delta_0 + 3\lambda^3\gamma 2^{v - 1}t_1^{\ast} \equiv -\delta_0 \equiv -2 \equiv 1 \pmod{3}.
\end{align*}
By $(\ref{epsilon0-delta0-equations})$ and since $\gcd(\lambda, 3\gamma) = 1$, we see that if $l$ is any prime dividing $\lambda$, then
\begin{align*}
-\delta_0 + 3\lambda^3\gamma 2^{v - 1}t_1^{\ast} \equiv -\delta_0 \equiv -\dfrac{1}{9\gamma^2} \not\equiv 0 \pmod{l}.
\end{align*}
Therefore we deduce that
\begin{align}
\label{E-2-satisfies-conditions-in-Schinzel-Hypothesis-H}
\gcd\left(3p^2\lambda^2, -\delta_0 + 3\lambda^3\gamma 2^{v - 1}t_1^{\ast} \right) = 1.
\end{align}

By $(\ref{Q-1-ast-satisfies-conditions-in-Schinzel-Hypothesis-H})$, $(\ref{E-1-satisfies-conditions-in-Schinzel-Hypothesis-H})$, $(\ref{E-2-satisfies-conditions-in-Schinzel-Hypothesis-H})$, we see that the polynomial $\Psi$ defined by $(\ref{Equation-Psi-mu-1-satisfies-Schinzel-Hypothesis-H})$ has no fixed prime divisor. On the other hand, $E_1, E_2$ and $Q_1^{\ast}$ have positive leading coefficients and irreducible over $\bQ$. Hence Schinzel's Hypothesis H expects that there should be infinitely many arbitrarily large positive integers $\mu_1$ such that $E_1, E_2$ and $Q_1^{\ast}$ are simultaneously primes. Take such a positive integer $\mu_1$, and define $\mu$ by $(\ref{Definition-mu-in-terms-of-p-and-mu-1-in-Schinzel-Hypothesis-H-producing-septuples-satisfying-Hypothesis-FM})$. We see that the choice of $\mu$ here is \textit{compatible} with that of $\mu$ in \textit{Step 4} in the proof of Lemma \ref{the-infinitude-of-septuples-A-B-C-D-E-F-G-satisfying-A1-A3-A4-A5-A6-A7-lemma}. More precisely, in \textit{Step 4} in the proof of Lemma \ref{the-infinitude-of-septuples-A-B-C-D-E-F-G-satisfying-A1-A3-A4-A5-A6-A7-lemma}, we chose $\mu$ so that $\mu = 3p\mu_1$ and $Q_1^{\ast} = 3pP_2^{\ast}\mu_1 + R_2^{\ast}$ is an odd prime for some integer $\mu_1$, and it is not difficult to realize that the choice of $\mu$ here satisfies these conditions. Repeating the same arguments as in \textit{Steps 5, 6, 7, 8} and \textit{9} in the proof of Lemma \ref{the-infinitude-of-septuples-A-B-C-D-E-F-G-satisfying-A1-A3-A4-A5-A6-A7-lemma}, we deduce that the septuple $(A, B, C, D, E, F, G)$ satisfies $(A1)$ and $(A3)-(A7)$ with respect to the couple $(p, n)$. It remains to prove that $(A, B, C, D, E, F, G)$ satisfies $(A2)$ with respect to the couple $(p, n)$.

By $(\ref{Definition-Equation-E-in-terms-of-E1-ast-and-E2-ast-in-using-Schinzel-Hypothesis-H-to-produce-septuples-satisfying-Hypothesis-FM})$, $(\ref{Equation-E-1-ast-equals-2-E-1})$ and $(\ref{Equation-E-2-ast-equals-2-E-2})$, we see that
\begin{align*}
E = -4F_0^3E_1E_2.
\end{align*}
Let $l$ be any odd prime such that $\gcd(l, 3) = \gcd(l, p) = 1$ and $l$ divides $E$. Then either $l$ divides $F_0$ or $\gcd(l, F_0) = 1$ and $l$ divides $E_1E_2$. If $l$ divides $F_0$, then it follows from $(F1)$ and $(F2)$ in \textit{Step 3} in the proof of Lemma \ref{the-infinitude-of-septuples-A-B-C-D-E-F-G-satisfying-A1-A3-A4-A5-A6-A7-lemma} that $p$ is a square in $\bQ_l^{\times}$. If $l$ does not divide $F_0$ and $l$ divides $E_1E_2$, then since $E_1, E_2$ are odd primes, it follows that
\begin{align*}
v_l(E) = v_l(-4F_0^3E_1E_2) = v_l(E_1E_2) \le 2.
\end{align*}
Thus we deduce that
\begin{align*}
v_l(E) - v_l(G) \le 2 + 0 = 2 < 6n.
\end{align*}
Thus $(A, B, C, D, E, F, G)$ satisfies $(A2)$ with respect to the couple $(p, n)$. Since $(A, B, C, D, E, F, G)$ is defined by $(\ref{the-second-parametrization-of-septuples-A-B-C-D-E-F-G-satisfying-Hypothesis-FM-equations})$, it follows that $(A, B, C, D, E, F, G)$ belongs to $\fC_p$ and satisfies Hypothesis FM with respect to the couple $(p, n)$. Hence our contention follows.

\end{proof}

\section*{Acknowledgements}

I would like to thank Mike Bennett and Bjorn Poonen for their comments. I thank Dinesh Thakur for his interest in this work. I am grateful to Romyar Sharifi for funding me under his NSF Grant DMS-0901526 in the fall of 2011 when part of this work was written. I was supported by a postdoctoral fellowship in the Department of Mathematics at University of British Columbia.

\end{document}